\documentclass[reqno,centertags, 12pt]{amsart}
\usepackage{amsmath,amsthm,amscd,amssymb}
\usepackage{latexsym}
\usepackage{graphicx}
\usepackage[mathscr]{eucal}
\sloppy

%%%%%%%%%%%%% fonts/sets %%%%%%%%%%%%%%%%%%%%%%%
\newcommand{\bbC}{{\mathbb{C}}}
\newcommand{\bbD}{{\mathbb{D}}}

\newcommand{\bbR}{{\mathbb{R}}}

\newcommand{\bbZ}{{\mathbb{Z}}}

\newcommand{\fre}{{\frak{e}}}
\newcommand{\frg}{{\frak{g}}}

\newcommand{\x}{{\mathbf{x}}}
\newcommand{\z}{{\mathbf{z}}}

\newcommand{\calC}{{\mathcal{C}}}
\newcommand{\calE}{{\mathcal{E}}}

\newcommand{\calH}{{\mathcal H}}

\newcommand{\calL}{{\mathcal L}}

%%%%%%%%%%%%%%%%%%  abbreviations %%%%%%%%%%%%%%%%%%%%%%%%

\newcommand{\lb}{\label}

\newcommand{\ol}{\overline}

\newcommand{\wti}{\widetilde  }

\newcommand{\tr}{\text{\rm{Tr}}}

\newcommand{\supp}{\text{\rm{supp}}}

\newcommand{\bi}{\bibitem}

\newcommand{\beq}{\begin{equation}}
\newcommand{\eeq}{\end{equation}}
\newcommand{\ba}{\begin{align}}
\newcommand{\ea}{\end{align}}

%\newcommand{\Ima}{\operatorname{Im}}
%\newcommand{\Real}{\operatorname{Re}}
%\newcommand{\diam}{\operatorname{diam}}

% use \hat in subscripts
% and upperlimits of int.

\let\det=\undefined\DeclareMathOperator{\det}{det}

%%%%%%%%%%%%% marginal warnings %%%%%%%%%%%%%%%%
% ON:

% OFF:
%\newcommand{\TK}{}

%
%  Rowan's unspaced list
%
\newcounter{smalllist}
\newenvironment{SL}{\begin{list}{{\rm\roman{smalllist})}}{%
\setlength{\topsep}{0mm}\setlength{\parsep}{0mm}\setlength{\itemsep}{0mm}%
\setlength{\labelwidth}{2em}\setlength{\leftmargin}{2em}\usecounter{smalllist}%
}}{\end{list}}

%
%Rowan's smaller \bigtimes
%

\newcommand{\comm}[1]{}
%%%%%%%%%%%%%%%%%%%%%% renewed commands %%%%%%%%%%%%%%%

%\renewcommand{\Re}{\text{\rm Re}}
%\renewcommand{\Im}{\text{\rm Im}}

%%%%%%%%%%%%%%%%%%%%%% operators %%%%%%%%%%%%%%%%%%%%%%

%\DeclareMathOperator*{\wlim}{weak  *-lim}

\allowdisplaybreaks
\numberwithin{equation}{section}
%%%%%%%%%%%%%%%%%%%%%%%%%%%%%%%%%%%%%%%%%%%%%%
%%%%%%%%%%%%%%%%%%%% end of  definitions
%%%%%%%%%%%%%%%%%%%%%%%%%%%%%%%%%%%%%%%%%%%%%%

\newtheorem{theorem}{Theorem}[section]

\newtheorem*{p2.1}{Proposition 2.1}

\newtheorem{lemma}[theorem]{Lemma}

\theoremstyle{definition}
\newtheorem*{remark}{Remark}
\newtheorem*{remarks}{Remarks}
\newtheorem*{definition}{Definition}

% Absolute value notation
\newcommand{\abs}[1]{\lvert#1\rvert}
\newcommand{\norm}[1]{\lVert#1\rVert}

\begin{document}

\title[Chebyshev Polynomials, I]{Asymptotics of Chebyshev Polynomials,\\I. Subsets of $\bbR$}
\author[J.~S.~Christiansen, B.~Simon, and M.~Zinchenko]{Jacob S.~Christiansen$^{1}$, Barry Simon$^{2,4}$,\\and
Maxim Zinchenko$^{3,5}$}

\thanks{$^1$ Centre for Mathematical Sciences, Lund University, Box 118, SE-22100, Lund, Sweden.
 E-mail: stordal@maths.lth.se}

\thanks{$^2$ Departments of Mathematics and Physics, Mathematics 253-37, California Institute of Technology, Pasadena, CA 91125.
E-mail: bsimon@caltech.edu}

\thanks{$^3$ Department of Mathematics and Statistics, University of New Mexico,
Albuquerque, NM 87131. E-mail: maxim@math.unm.edu}

\thanks{$^4$ Research supported in part by NSF grant DMS-1265592 and in part by Israeli BSF Grant No. 2010348}

\thanks{$^5$ Research supported in part by Simons Foundation grant CGM-281971.}

\

\date{\today}
\keywords{Chebyshev polynomials, Widom conjecture, Parreau--Widom set}
\subjclass[2010]{41A50, 30E15, 30C10}

\begin{abstract} We consider Chebyshev polynomials, $T_n(z)$, for infinite, compact sets $\fre \subset \bbR$ (that is, the monic polynomials minimizing the $\sup$--norm, $\norm{T_n}_\fre$, on $\fre$).  We resolve a $45+$ year old conjecture of Widom that for finite gap subsets of $\bbR$, his conjectured asymptotics (which we call Szeg\H{o}--Widom asymptotics) holds.  We also prove the first upper bounds of the form $\norm{T_n}_\fre \le Q C(\fre)^n$ (where $C(\fre)$ is the logarithmic capacity of $\fre$) for a class of $\fre$'s with an infinite number of components, explicitly for those $\fre \subset \bbR$ that obey a Parreau--Widom condition.
\end{abstract}

\maketitle

%%%%%%%%%%%%%%%%%%%%%%%%%%%%%%%%%%%%%%%%%%%%%%%%%%%%%%%%%%%%%%
\section{Introduction} \lb{s1}
%%%%%%%%%%%%%%%%%%%%%%%%%%%%%%%%%%%%%%%%%%%%%%%%%%%%%%%%%%%%%%

This paper is the first of what we hope will be a series studying the asymptotics of Chebyshev polynomials associated to an arbitrary compact subset, $\fre \subset \bbC$, which has an infinite number of points. These are those degree $n$ monic polynomials, $T_n$, which minimize
\begin{equation} \lb{1.1}
\norm{f}_\fre = \sup_{z \in \fre} |f(z)|
\end{equation}

See \cite{SY93} for background on general Chebyshev polynomials and their applications. It is known (see below for the case $\fre \subset \bbR$) that the minimizer is unique.  We will denote this minimizer as $T_n$ in cases where the intended $\fre$ is clear.  If $\mathrm{Re}P$ is the polynomial whose coefficients are the real parts of those of $P$, we have that $|\mathrm{Re}P(x)| < |P(x)|$ for all but finitely many $x \in \bbR$ unless $\mathrm{Re}P \equiv P$.  Since $\mathrm{Re}P$ is monic if $P$ is, $\mathrm{Re}T_n$ is also a Chebyshev polynomial so by the alternation theorem (see Theorem \ref{t1.1} below), $\vert\mathrm{Re}T_n(x)\vert=\Vert T_n\Vert$ has at least $n+1$ solutions in $\bbR$ if $\fre\subset\bbR$. Hence $\mathrm{Im}T_n\equiv 0$, that is, $T_n$ is real.  We begin by recalling a basic result which goes back to Borel \cite{Borel} and Markov \cite{Markov} (which according to Akhiezer \cite{Akh} was based on lectures from 1905 but only published in 1948).  It depends on a basic notion that comes from ideas of Chebyshev \cite{Cheb}:

\begin{definition} We say that $P_n$, a real degree $n$ polynomial, has an \textit{alternating set} in $\fre \subset \bbR$ if there exists $\{x_j\}^n_{j=0} \subset \fre$ with $x_0 < x_1 < \ldots < x_n$ so that
\begin{equation} \lb{1.2}
P_n(x_j) = (-1)^{n-j} \norm{P_n}_\fre
\end{equation}
\end{definition}

\begin{theorem} [The Alternation Theorem] \lb{t1.1} Let $\fre \subset \bbR$ be compact.  The Chebyshev polynomial of degree $n$ for $\fre$ has an alternating set in $\fre$.  Conversely, any monic polynomial with an alternating set in $\fre$ is the Chebyshev polynomial for $\fre$.
\end{theorem}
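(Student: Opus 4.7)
The plan is to prove both directions in turn, with the converse automatically yielding the uniqueness noted just before the theorem. Existence of at least one minimizer $T_n$ is standard: $\norm{\cdot}_\fre$ is continuous on the $n$-dimensional affine space of monic degree-$n$ polynomials and coercive (since $\fre$ has infinitely many points), so the infimum is attained. Fix such a $T_n$, let $M=\norm{T_n}_\fre$, and set $\calC=\{x\in\fre:|T_n(x)|=M\}$. Because $T_n\mp M$ each have at most $n$ real roots, $\calC$ is finite.

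For the forward direction I would argue by contradiction: assume the longest alternating subset of $\calC$ has length $k\le n$. Then $\calC$ splits in its natural order into $k$ consecutive same-sign blocks $I_1<\cdots<I_k$ with signs $s_j\in\{\pm 1\}$ that alternate. Pick separators $y_j\in(\max I_j,\min I_{j+1})$ for $j=1,\dots,k-1$, so $y_j\notin\calC$, and set
\[
q(x)=\epsilon\prod_{i=1}^{k-1}(x-y_i),
\]
with $\epsilon=\pm 1$ chosen so that $\operatorname{sign}(q)=s_1$ on $I_1$; since the sign of $q$ flips at each $y_j$ and $s_j$ flips across clusters, the agreement extends to every $I_j$. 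Because $\deg q=k-1\le n-1$, each $T_n-tq$ is still monic of degree $n$. For $\delta>0$ small, $\fre_\delta:=\{x\in\fre:|T_n(x)|\ge M-\delta\}$ is a union of $k$ disjoint neighborhoods of the $I_j$'s on which $\operatorname{sign}(q)=\operatorname{sign}(T_n)$ and $|q|\ge c$ for some $c>0$; on $\fre\setminus\fre_\delta$ one has $|T_n|\le M-\delta$. Choosing $t>0$ small enough yields $\norm{T_n-tq}_\fre<M$, contradicting the minimality of $T_n$.

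For the converse, let $P$ be monic of degree $n$ with alternating set $\{x_j\}_{j=0}^n\subset\fre$ at level $M=\norm{P}_\fre$, and let $Q$ be any competitor (monic, degree $n$, $\norm{Q}_\fre\le M$). Then $R:=P-Q$ has $\deg R\le n-1$, and
\[
(-1)^{n-j}R(x_j)=M-(-1)^{n-j}Q(x_j)\ge 0
\]
at each alternation point. A sign-and-multiplicity count across the $n$ consecutive intervals $[x_{j-1},x_j]$ shows that $R$ has at least $n$ real zeros counted with multiplicity: at each interior $x_j$ with $R(x_j)=0$, the same-sign condition on $R(x_{j\pm 1})$ forces either even multiplicity at $x_j$ or an extra zero in an adjacent subinterval, so the total count never drops below $n$. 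Since $\deg R\le n-1$, this forces $R\equiv 0$, i.e.\ $Q=P$. This simultaneously establishes the converse and the uniqueness of the Chebyshev polynomial.

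The main technical hurdle is the zero-counting in the converse, where alternation points at which $R$ vanishes must be accounted for via parity of multiplicity rather than a simple sign change; the existence direction is essentially routine once the cluster decomposition and the correctly-signed perturbation $q$ are in place.
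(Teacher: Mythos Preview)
Your proof is correct and follows essentially the same strategy as the paper's. The forward direction is identical in spirit: the paper simply asserts the existence of a degree $\le n-1$ polynomial $Q$ that is nonvanishing and sign-matching with $T_n$ at the extremal points, whereas you construct it explicitly as $\epsilon\prod(x-y_i)$ over separators between the sign blocks; the perturbation argument is then the same.

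The one tactical difference is in the converse. The paper assumes a \emph{strict} competitor, $\norm{T_n}_\fre<\norm{P_n}_\fre$, so that $R=P_n-T_n$ is strictly nonzero with the sign of $P_n$ at every alternation point, yielding $n$ clean sign changes and avoiding all multiplicity bookkeeping; uniqueness is then disposed of separately in one line via $\tfrac12(T_n+S_n)$. You instead allow $\norm{Q}_\fre\le M$ and fold uniqueness into the converse, at the price of the parity/multiplicity argument you correctly flag as the main technical hurdle. Both routes are valid; the paper's split is a little cleaner, while yours is self-contained.
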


\begin{proof} The proof is simple and not so available in our generality, so we include it -- it is essentially what Markov gives in \cite{Markov} for the case $\fre = [a,b]$.  If $T_n$ is the Chebyshev polynomial, let $y_0 < y_1 < \ldots < y_k$ be the set of all the points in $\fre$ where its takes the value $\pm \norm{T_n}_\fre$. If there are fewer than $n$ sign changes among these ordered points, then we can find a degree at most $n-1$ polynomial, $Q$, non-vanishing at each $y_j$ and with the same sign as $T_n$ at those points. For $\epsilon$ small and positive, $T_n - \epsilon Q$ will be a monic polynomial with smaller $\norm{\cdot}_\fre$. Thus there must be at least $n$ sign flips and therefore an alternating set.

Conversely, let $P_n$ be a degree $n$ monic polynomial with an alternating set and suppose that $\norm{T_n}_\fre < \norm{P_n}_\fre$. Then at each point, $x_j$, in the alternating set for $P_n$, $Q \equiv P_n - T_n$ has the same sign as $P_n$,  so $Q$ has at least $n$ zeros, which is impossible, since it is of degree at most $n-1$.  \end{proof}

The alternation theorem implies uniqueness of the Chebyshev polynomial. For, if $T_n$ and $S_n$ are two minimizers, so is $Q \equiv \tfrac{1}{2}(T_n + S_n)$. At the alternating points for $Q$, we must have $T_n = S_n$, so they must be equal polynomials since there are $n+1$ points in the alternating set and their difference has degree at most $n-1$.

The alternation theorem also implies some simple facts about the zeros of $T_n$:

\begin{SL}
\item[{\rm{(a)}}] All the zeros of the Chebyshev polynomials of a set $\fre \subset \bbR$ lie in $\bbR$ and all are simple and lie in $\mathrm{cvh}(\fre)$, the convex hull of $\fre$.  This is because there must be at least one zero between any pair of points in an alternating set and this accounts for all $n$ zeros.  The same argument shows that for any $\gamma \in (-\norm{T_n}_\fre,\norm{T_n}_\fre)$ all $n$ solutions of $T_n(x) = \gamma$ are simple and lie in $\mathrm{cvh}(\fre)$.  This plus the open mapping theorem implies that (inverse as a function from $\bbC$)
\begin{equation} \lb{1.3}
\fre_n \equiv T_n^{-1} ([-\norm{T_n}_\fre,\norm{T_n}_\fre]) \subset \mathrm{cvh}(\fre)
\end{equation}
\item[{\rm{(b)}}] By a \textit{gap} of $\fre \subset \bbR$, we mean a bounded connected component of $\bbR \setminus \fre$. If there are only finitely many gaps and no component of $\fre$ is a single point, we speak of a \textit{finite gap set}. Between any two zeros of $T_n$, there is a point in the alternating set so each gap of $\fre \subset \bbR$ has at most one zero of $T_n$.
\item[{\rm{(c)}}] Above the top zero (resp. below the bottom zero) of $T_n$, $|T_n(x)|$ is monotone increasing.  It follows that $x_n = \sup_{y \in \fre} y$ (resp $x_0 = \inf_{y \in \fre} y$) so at the endpoints of $\mathrm{cvh}(\fre) \subset \bbR$ we have that $|T_n(x)| = \norm{T_n}_\fre$.
\smallskip
\end{SL}

To get ahead of our story, a key understanding in our analysis in this paper is that $\fre_n$ defined in \eqref{1.3} is the spectrum of a periodic Schr\"{o}dinger operator and up to normalization, $T_n$ is its spectral theory discriminant; see Section~\ref{s2}.

Going back at least to Szeg\H{o} \cite{Sz1924} is the idea that potential theory is essential to the study of Chebyshev polynomials.  To settle the notation we use, we recall some of the basic definitions.  References for the potential theory that we need include \cite{Helms,Land,Ran,EqMC,CompHA,Tsu}. Given a probability measure, $d\mu$, of compact support on $\bbC$, we define its \textit{Coulomb energy}, $\calE(\mu)$ by
\begin{equation} \lb{1.4}
\calE(\mu) = \int d\mu(x)\, d\mu(y)\, \log\, |x-y|^{-1}
\end{equation}
and we define the \textit{Robin constant} of a compact set $\fre \subset \bbC$ by
\begin{equation} \lb{1.5}
R(\fre) = \inf \{\calE(\mu) \mid \supp(\mu) \subset \fre \, \mathrm{and} \, \mu(\fre) = 1\}
\end{equation}

If $R(\fre)=\infty$, we say $\fre$ is a \textit{polar set} or has \textit{capacity zero}. If something holds except for a polar set, we say it holds q.e. (for \textit{quasi-everywhere}). The \textit{capacity}, $C(\fre)$, of $\fre$ is defined by
\begin{equation} \lb{1.6}
C(\fre) = \exp(-R(\fre)), \qquad  R(\fre) = \log (1/C(\fre))
\end{equation}

If $\fre$ is not a polar set, it follows from weak lower semicontinuity of $\calE(\cdot)$ and weak compactness of the family of probability measures that there is a probability measure whose Coulomb energy is $R(\fre)$.  Since $\calE(\cdot)$ is strictly convex on the probability measures, this minimizer is unique.  It is called the \textit{equilibrium measure} or \textit{harmonic measure} of $\fre$ and denoted $d\rho_{\fre}$.  The second name comes from the fact (see Conway \cite{Conway} or Simon \cite{CompHA}) that if $f$ is a continuous function on $\fre$, there is a unique function, $u_f$, harmonic on $(\bbC \cup \{\infty \})\setminus \fre$, which approaches $f(x)$ for q.e. $x \in \fre$ (i.e., solves the Dirichlet problem) and
\begin{equation} \lb{1.7}
u_f(\infty) = \int_{\fre}f(x)d\rho_{\fre}(x)
\end{equation}

The function $\Phi_{\fre}(z) = \int_{\fre}d\rho_{\fre}(x)\, \log\, |x-z|^{-1}$ is called the \textit{equilibrium potential}. The \textit{Green's function}, $G_{\fre}(z)$, of a compact subset, $\fre \subset \bbC$, is defined by
\begin{equation} \lb{1.8}
G_{\fre}(z) = R(\fre) - \Phi_{\fre}(z)
\end{equation}
It follows from Frostman's theorem that it is the unique function harmonic on $\bbC \setminus \fre$ with q.e. boundary value $0$ on $\fre$ and so that $G_{\fre}(z) - \log \, |z|$ is harmonic at $\infty$.  Moreover, $G_{\fre}(z) \ge 0$ everywhere and near $\infty$
\begin{equation} \lb{1.9}
G_{\fre}(z) = \log\, \abs{z} + R(\fre) + \mathrm{O}(1/\abs{z})
\end{equation}
equivalently,
\begin{equation} \lb{1.10}
\exp(G_\fre(z)) = \frac{\abs{z}}{C(\fre)} + \mathrm{O}(1)
\end{equation}
If $G_\fre$ is zero on $\fre$ and continuous on all of $\bbC$, we say that $\fre$ is \textit{regular (for potential theory)}.

To put our new results in context, we need to remind the reader of some previous results.  Using what is now called the Bernstein--Walsh lemma, Szeg\H{o} \cite{Sz1924} proved for all non-polar compact sets $\fre \subset \bbC$
\begin{equation} \lb{1.11}
\norm{T_n}_\fre \ge C(\fre)^n
\end{equation}
which was improved when $\fre \subset \bbR$ by Schiefermayr \cite{Schie} (see Section~\ref{s2}) to
\begin{equation} \lb{1.12}
\norm{T_n}_\fre \ge 2 C(\fre)^n
\end{equation}

Szeg\H{o} \cite{Sz1924}, using in part prior results of Faber \cite{Faber} and Fekete \cite{Fekete}, proved

\begin{theorem} [FFS Theorem] \lb{t1.2} For any compact non-polar set $\fre \subset \bbC$, one has that
\begin{equation} \lb{1.13}
\lim_{n \to \infty} \norm{T_n}_\fre^{1/n} = C(\fre)
\end{equation}

\end{theorem}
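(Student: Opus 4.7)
The lower bound $\liminf_n \norm{T_n}_\fre^{1/n} \geq C(\fre)$ is immediate from Szeg\H{o}'s inequality \eqref{1.11}, so the task reduces to proving the matching upper bound $\limsup_n \norm{T_n}_\fre^{1/n} \leq C(\fre)$; once both hold, the limit exists automatically.

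The strategy for the upper bound is to exhibit, for each $n$, an explicit monic competitor of degree $n$ whose sup-norm on $\fre$ is controlled, and invoke the minimality defining $T_n$. The classical choice is a Fekete polynomial. Pick $w_1,\ldots,w_{n+1}\in\fre$ maximizing the Vandermonde product $\prod_{1\leq i<j\leq n+1}\abs{\zeta_i-\zeta_j}$ over $(n+1)$-tuples in $\fre$, and for each $k$ set $F_k(z)=\prod_{j\neq k}(z-w_j)$, a monic polynomial of degree $n$. Maximality of the tuple forces
\[
\sup_{z\in\fre}\,\prod_{j\neq k}\abs{z-w_j}\leq \prod_{j\neq k}\abs{w_k-w_j},
\]
since otherwise one could strictly enlarge the Vandermonde by replacing $w_k$ with a better point of $\fre$. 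Taking the product over $k=1,\ldots,n+1$, each unordered pair $\{i,j\}$ appears twice on the right-hand side, so
\[
\prod_{k=1}^{n+1}\norm{F_k}_\fre \leq \Bigl(\prod_{1\leq i<j\leq n+1}\abs{w_i-w_j}\Bigr)^{\!2} = d_{n+1}(\fre)^{n(n+1)},
\]
where $d_m(\fre)^{m(m-1)/2}$ denotes the maximum Vandermonde on $m$ points of $\fre$. The AM--GM inequality now gives $\min_k\norm{F_k}_\fre\leq d_{n+1}(\fre)^n$, and since $T_n$ is the minimizer, $\norm{T_n}_\fre\leq d_{n+1}(\fre)^n$.

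It remains to show that $d_n(\fre)\to C(\fre)$; this is Fekete's theorem, and is the main substantial input. Monotonicity $d_{n+1}\leq d_n$ (delete one point from an optimal $(n+1)$-tuple and average over the $n+1$ deletions via AM--GM) yields the existence of $\tau(\fre):=\lim_n d_n(\fre)$. The bound $\tau(\fre)\geq C(\fre)$ follows by discretization: for a Fekete tuple the expression $-\tfrac{2}{n(n-1)}\sum_{i<j}\log\abs{w_i-w_j}$ is, up to regularization of the diagonal, a lower Riemann sum for $\calE(\rho_\fre)$, forcing $-\log\tau(\fre)\leq R(\fre)$. The reverse inequality $\tau(\fre)\leq C(\fre)$ is obtained by extracting a weak-$*$ limit $\nu$ of the empirical measures $\tfrac1n\sum_j\delta_{w_j}$ attached to Fekete tuples, applying weak lower semicontinuity of $\calE$ to see that $\calE(\nu)\leq -\log\tau(\fre)$, and identifying $\nu=d\rho_\fre$ by uniqueness of the equilibrium measure.

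The only real obstacle is this potential-theoretic input on the transfinite diameter; the rest is bookkeeping around the minimality of $T_n$. Combining the three ingredients closes the chain
\[
C(\fre)\leq\liminf_n\norm{T_n}_\fre^{1/n}\leq\limsup_n\norm{T_n}_\fre^{1/n}\leq\lim_n d_{n+1}(\fre)=C(\fre),
\]
which proves \eqref{1.13}.
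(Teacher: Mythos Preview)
The paper does not actually supply a proof of Theorem~\ref{t1.2}; it is stated as a classical result attributed to Faber, Fekete, and Szeg\H{o} and invoked thereafter as background. So there is no ``paper's own proof'' to compare against. What you have written is precisely the classical route: Szeg\H{o}'s lower bound \eqref{1.11} for the $\liminf$, Fekete polynomials for the upper bound $\norm{T_n}_\fre \le d_{n+1}(\fre)^n$, and Fekete's theorem $d_n(\fre)\to C(\fre)$ to close the loop. The argument is correct.

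Two minor comments on the Fekete-theorem sketch. In the direction $\tau(\fre)\ge C(\fre)$, the phrase ``lower Riemann sum'' is slightly loose; the clean way to say it is that averaging the discrete energy $\tfrac{2}{n(n-1)}\sum_{i<j}\log|x_i-x_j|^{-1}$ against $d\rho_\fre(x_1)\cdots d\rho_\fre(x_n)$ gives exactly $\calE(\rho_\fre)=R(\fre)$, while the integrand is pointwise $\ge -\log d_n(\fre)$ by maximality of Fekete tuples. In the other direction, you do not need to identify the weak-$*$ limit $\nu$ with $\rho_\fre$: the inequality $R(\fre)\le\calE(\nu)\le -\log\tau(\fre)$ already follows from the definition of $R(\fre)$ as an infimum, and that is all that is required. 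These are polish, not gaps.
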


Upper bounds on $\norm{T_n}_\fre$ which complement \eqref{1.11} or \eqref{1.12} in that they also grow like $C(\fre)^n$ are clearly interesting.  The following is known

\begin{theorem} [Totik--Widom Theorem] \lb{t1.3} For any finite gap set $\fre \subset \bbR$, one has, for a constant $Q \ge 2$ depending on $\fre$, that
\begin{equation} \lb{1.14}
\norm{T_n}_\fre \le Q C(\fre)^n
\end{equation}
\end{theorem}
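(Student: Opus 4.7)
The strategy is, for each large $n$, to exhibit an explicit monic polynomial of degree $n$ whose sup-norm on $\fre$ is at most a universal constant times $C(\fre)^n$; the theorem then follows by Chebyshev minimality. The candidate polynomial arises by pulling back $[-1,1]$ through a real polynomial map whose inverse image is a finite-interval set $\fre^*\supset\fre$ with capacity very close to $C(\fre)$.

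For the template step, let $P$ be any real polynomial of degree $N$ with leading coefficient $a$ such that $\fre^*:=P^{-1}([-1,1])$ is a disjoint union of finitely many compact real intervals. The monic polynomial $a^{-1}P$ satisfies $\|a^{-1}P\|_{\fre^*}=1/|a|$. Since $N^{-1}G_{[-1,1]}(P(z))$ is harmonic on $\bbC\setminus\fre^*$, vanishes q.e.\ on $\fre^*$, and behaves like $\log|z|+N^{-1}\log(2|a|)+\oh(1)$ at $\infty$ (using $P(z)\sim az^N$, $R([-1,1])=\log 2$, and \eqref{1.9}), uniqueness of the Green's function gives $G_{\fre^*}(z)=N^{-1}G_{[-1,1]}(P(z))$ and hence $R(\fre^*)=N^{-1}\log(2|a|)$; equivalently, $1/|a|=2\,C(\fre^*)^N$. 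The template bound is therefore
$$\|a^{-1}P\|_{\fre^*}=2\,C(\fre^*)^N.$$

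For the approximation step, I invoke a theorem of Totik (with closely related earlier contributions of Bogatyrev and Peherstorfer): for a finite gap set $\fre$ there exist $K=K(\fre)$ and $n_0=n_0(\fre)$ such that for every $n\ge n_0$ one can produce a real polynomial $P_n$ of degree exactly $n$ with $\fre\subset\fre^*_n:=P_n^{-1}([-1,1])$, where $\fre^*_n$ has the same number of intervals as $\fre$ and $C(\fre^*_n)\le(1+K/n)\,C(\fre)$. Combining this with the template and the minimality of $T_n$ on $\fre$,
\begin{align*}
\|T_n\|_\fre &\le \|a_n^{-1}P_n\|_\fre \le \|a_n^{-1}P_n\|_{\fre^*_n} = 2\,C(\fre^*_n)^n \\
&\le 2\bigl(1+K/n\bigr)^n C(\fre)^n \le 2e^K\,C(\fre)^n,
\end{align*}
which gives the result for $n\ge n_0$. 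The finitely many $n<n_0$ are absorbed by enlarging the constant via the trivial bound $\|T_n\|_\fre\le(\diam\fre)^n$, so one can take $Q:=\max\bigl(2e^K,\,(\diam\fre/C(\fre))^{n_0}\bigr)\ge 2$.

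\textbf{Main obstacle.} The heart of the argument is the approximation step with its $\Oh(1/n)$ rate -- a weaker $\oh(1)$ rate would make the factor $(C(\fre^*_n)/C(\fre))^n$ blow up and destroy the bound. Its proof relies on the fact that each component of a polynomial inverse image $P^{-1}([-1,1])$ of degree $n$ carries harmonic measure which is an integer multiple of $1/n$; one must therefore simultaneously Diophantine-approximate the (generically irrational) harmonic-measure vector of $\fre$ by rationals with common denominator $n$, and then realize the approximating harmonic-measure profile and component endpoints as the inverse image of a genuine polynomial of the exact right degree whose preimage set still contains $\fre$. A continuity-of-capacity argument in these parameters then delivers the $1+\Oh(1/n)$ estimate.
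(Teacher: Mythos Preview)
Your argument is correct and is essentially Totik's route, which the paper cites (Remark~1 after Theorem~\ref{t1.3}) as one of the two original sources of the result. The paper itself does not reprove Theorem~\ref{t1.3} along these lines; instead its new proof is the one in Section~\ref{s4}, which establishes the stronger Theorem~\ref{t1.4} and recovers Theorem~\ref{t1.3} as the finite gap special case. The two arguments differ substantially. You build a trial polynomial by first invoking the external approximation machinery (Robinson--Bogatyr\"{e}v--Peherstorfer--Totik) that manufactures a period-$n$ superset $\fre_n^*\supset\fre$ with $C(\fre_n^*)\le(1+K/n)C(\fre)$, and then feed in the template identity $\|a^{-1}P\|_{\fre^*}=2C(\fre^*)^N$. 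The paper instead works with the intrinsic set $\fre_n=T_n^{-1}([-\|T_n\|_\fre,\|T_n\|_\fre])$ and bounds $C(\fre_n)/C(\fre)$ directly: it observes that $h=G_\fre-G_{\fre_n}$ is harmonic on $(\bbC\cup\{\infty\})\setminus\fre_n$ with $h(\infty)=\log(C(\fre_n)/C(\fre))$, represents $h(\infty)=\int_{\fre_n}G_\fre\,d\rho_n$ by the harmonic-measure formula \eqref{1.7}, and then uses the elementary band-structure fact $\rho_n(K)\le 1/n$ for each gap $K$ (Theorem~\ref{t2.4}) to get $h(\infty)\le n^{-1}\sum_j G_\fre(w_j)=n^{-1}PW(\fre)$. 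This yields the explicit constant $Q=2\exp(PW(\fre))$, sidesteps the Diophantine/realizability obstacle you flag entirely, and extends verbatim to arbitrary Parreau--Widom subsets of $\bbR$; your approach, as stated, gives a non-explicit $Q$ and does not generalize past finite gap sets.
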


\begin{remarks} 1. This result follows from work of Widom \cite{Widom} that we discuss below on asymptotics of $\norm{T_n}_\fre$ (see Theorem~\ref{t1.8}).  Totik \cite{Totik09} proved an equivalent result but in a different form involving control of $C(\fre_n)$ where $\fre_n$ is the set of \eqref{1.3}.  We'll discuss this further in Section~\ref{s4}.

\smallskip

2. Neither approach leads to very explicit control on the constant $Q$ (although Widom explicitly finds $\limsup_{n \to \infty} \norm{T_n}_\fre/C(\fre)^n$ in terms of the solution to a minimization problem and he does have explicit bounds on this $\limsup$ but not on the $\sup$).

\smallskip

3. Widom proved this bound also for certain sets $\fre \subset \bbC$ that have finitely many components.  Recently, Andrievskii \cite{Andr} and Totik--Varga \cite{TV} have increased the family of finite component sets in $\bbC$ for which \eqref{1.14} holds.
\end{remarks}

One of our two main results in this paper extends this last result to a larger class of sets $\fre \subset \bbR$ with a simple explicit bound on $Q$ in terms of $G_\fre$.  Recall \cite{Parreau, Widom71}

\begin{definition} A set $\fre \subset \bbC$ is said to be a \textit{Parreau--Widom set} if
\begin{equation} \lb{1.15}
PW(\fre) \equiv \sum_{w \in \calC} G_\fre(w) < \infty
\end{equation}
where $\calC$ is the set of critical points of $G_\fre$ (i.e., points where $\partial G_\fre(w) = 0$)
\end{definition}

In this paper, we use Wittinger calculus:
\begin{equation} \lb{1.15a}
\partial = \frac{1}{2} \left( \frac{\partial}{\partial x} - i\, \frac{\partial}{\partial y}\right), \qquad
\bar\partial = \frac{1}{2} \left( \frac{\partial}{\partial x} + i\, \frac{\partial}{\partial y}\right)
\end{equation}
We recall that $\bar\partial f = 0$ are the Cauchy--Riemann equations (so that, for harmonic functions, $u$, $\partial u$ is analytic) and that for analytic functions, $f$, we have that $\partial f = f'$, the complex derivative.  Moreover, by the Cauchy--Riemann equations
\begin{equation} \lb{1.15b}
f \,\mathrm{analytic} \Rightarrow 2 \, \partial(\mathrm{Re}(f)) = f'
\end{equation}
%\end{definition}

If $\fre \subset \bbR$, it is easy to see all the critical points lie in $\bbR$.  If $\fre$ is also regular, there is exactly one critical point in each gap and so \eqref{1.15} is the sum over the maxima of $G_\fre$ in the gaps.  In particular, every finite gap set is a Parreau--Widom set.  Our new result, proven in Section~\ref{s4}, is:

\begin{theorem} \lb{t1.4} If $\fre \subset \bbR$ is a regular Parreau--Widom set, then
\begin{equation} \lb{1.16}
\norm{T_n}_\fre \le 2 \exp(PW(\fre)) C(\fre)^n
\end{equation}
\end{theorem}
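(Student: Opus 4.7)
My plan is to reduce \eqref{1.16} to a comparison of Robin constants. Set $M_n := \|T_n\|_\fre$ and $\fre_n := T_n^{-1}([-M_n, M_n])$, the set in \eqref{1.3}. Since $T_n$ exhibits $\fre_n$ as an $n$-fold polynomial preimage of $[-M_n, M_n]$, one has the pullback identity $G_{\fre_n}(z) = \tfrac{1}{n} G_{[-M_n, M_n]}(T_n(z))$, which together with the $\infty$-expansion \eqref{1.9} yields $M_n = 2 C(\fre_n)^n$. Hence \eqref{1.16} is equivalent to
\[
n \bigl[ R(\fre) - R(\fre_n) \bigr] \le PW(\fre),
\]
where $R(\cdot) = -\log C(\cdot)$.

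Next, I would represent the left side as an integral. Because $\fre \subset \fre_n$, the difference $G_\fre - G_{\fre_n}$ is nonnegative, harmonic on $\bbC \setminus \fre_n$, has boundary values $G_\fre|_{\fre_n}$ (as $G_{\fre_n}$ vanishes on $\fre_n$ by regularity), and tends to $R(\fre) - R(\fre_n)$ at $\infty$. Evaluating this bounded harmonic function at $\infty$ via the harmonic-measure representation for the exterior of $\fre_n$ (whose harmonic measure from $\infty$ is $\rho_{\fre_n}$) gives $R(\fre) - R(\fre_n) = \int_{\fre_n} G_\fre\, d\rho_{\fre_n}$. Combining with the standard density formula $d\rho_{\fre_n}(x) = |T_n'(x)|\, dx / (n \pi \sqrt{M_n^2 - T_n(x)^2})$ and the $n$-to-$1$ change of variables $y = T_n(x)$ recasts the desired inequality as
\[
\int_{-M_n}^{M_n} \mathcal{G}(y)\, \frac{dy}{\pi \sqrt{M_n^2 - y^2}} \le PW(\fre),
\qquad
\mathcal{G}(y) := \sum_{x : T_n(x) = y} G_\fre(x).
\]
Because the arcsine density is a probability measure on $[-M_n, M_n]$, it suffices to prove the pointwise bound $\mathcal{G}(y) \le PW(\fre)$ for every $y \in (-M_n, M_n)$.

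The pointwise bound is the heart of the argument and rests on the alternation theorem. Let $\tilde x_0 < \cdots < \tilde x_n$ be an alternating set for $T_n$ in $\fre$, so $T_n(\tilde x_j) = (-1)^{n-j} M_n$. For $y \in (-M_n, M_n)$ the values $T_n(\tilde x_j) - y$ have strictly alternating signs, and since (by statement (a) of the introduction) $T_n - y$ has exactly $n$ real roots in $\cvh(\fre) = [\tilde x_0, \tilde x_n]$, the intermediate value theorem forces each interval $(\tilde x_j, \tilde x_{j+1})$ to contain exactly one preimage $x_j(y)$. Each gap of $\fre$ is contained in a single interval $(\tilde x_j, \tilde x_{j+1})$, since both endpoints of the gap and all $\tilde x_k$'s lie in $\fre$. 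In any gap $g$ the function $G_\fre|_g$ vanishes at the endpoints and attains its maximum at its unique interior critical point $w_g \in \calC$; hence $G_\fre(x_j(y))$ is either zero (if $x_j(y) \in \fre$) or bounded by $G_\fre(w)$ for some $w \in \calC \cap (\tilde x_j, \tilde x_{j+1})$. Since each $w \in \calC$ belongs to exactly one such interval, summing over $j$ counts each critical point at most once and yields $\mathcal{G}(y) \le \sum_{w \in \calC} G_\fre(w) = PW(\fre)$.

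The main obstacle is conceptual rather than computational: the interlacing of the preimages of $y$ with the alternating set of $T_n$, a direct consequence of Theorem~\ref{t1.1}, is precisely what converts the fiber sum $\mathcal{G}(y)$ into a telescoping bound over the gaps of $\fre$ that matches $PW(\fre)$. The supporting identities $M_n = 2 C(\fre_n)^n$ and the explicit form of $d\rho_{\fre_n}$ are the ``discriminant'' facts alluded to after \eqref{1.3} and are expected to be developed rigorously in Section~\ref{s2}.
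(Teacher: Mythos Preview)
Your proof is correct and shares its skeleton with the paper's argument: both reduce to the inequality $n[R(\fre)-R(\fre_n)]\le PW(\fre)$ via $\|T_n\|_\fre=2C(\fre_n)^n$, and both express $R(\fre)-R(\fre_n)$ as $\int_{\fre_n}G_\fre\,d\rho_{\fre_n}$ using that $\rho_{\fre_n}$ is harmonic measure at $\infty$. The point of departure is the decomposition of this integral. The paper splits it over gaps,
\[
\int_{\fre_n}G_\fre\,d\rho_{\fre_n}=\sum_{K}\int_{K\cap\fre_n}G_\fre\,d\rho_{\fre_n}\le\sum_{K}\rho_{\fre_n}(K)\,G_\fre(w_K),
\]
and invokes the separately proven band-structure fact $\rho_{\fre_n}(K)\le 1/n$ (Theorem~\ref{t2.4}). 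You instead push forward by $T_n$, writing the integral as a $y$-average of the fiber sum $\mathcal G(y)=\sum_{T_n(x)=y}G_\fre(x)$ against the arcsine probability, and then bound $\mathcal G(y)\le PW(\fre)$ pointwise using that the $n$ preimages of $y$ interlace with an alternating set in $\fre$, so that no two of them fall in the same gap. Your route is a bit more self-contained: it bypasses the statement $\rho_{\fre_n}(K)\le 1/n$ and the measure-per-band computation of Theorem~\ref{t2.3}, extracting the needed combinatorics directly from the alternation theorem. The paper's route has the advantage that the intermediate fact $\rho_{\fre_n}(K)\le 1/n$ is of independent interest and is reused later (e.g., in Section~\ref{s5}).
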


\begin{remarks} 1.  For a finite gap set, the sum in \eqref{1.15} has finitely many terms, so is finite, and thus this result implies Theorem~\ref{t1.4} with a fairly explicit $Q$.  We note that homogeneous sets in the sense of Carleson \cite{Carl}, and, in particular, positive measure Cantor sets, are regular Parreau--Widom sets \cite{JM}.

\smallskip

2.  It is interesting to know for what other sets in $\bbR$ a bound like \eqref{1.14} is true.  For example, does the classical ${1}/{3}$ Cantor set, which is not a Parreau--Widom set, obey \eqref{1.14}?

\smallskip

3. We wonder if this result extends to Parreau--Widom sets in $\bbC$.

\smallskip

4. For the finite gap case, Widom \cite{Widom} obtains a bound on the $\limsup$ involving $\exp(PW(\fre))$ and our result is compatible with his in this finite gap case.

\end{remarks}

Our main focus will be on pointwise asymptotics of $T_n(z)$ on $\bbC \setminus \fre$.  The earliest results on this subject go back to Faber \cite{Faber} in 1919.  Let $\fre$ be a Jordan region with analytic boundary, i.e., an analytic Jordan curve together with its interior region. By the maximum principle, the Chebyshev polynomials for $\fre$ are the same as those for the curve. There is a unique Riemann map, $B_\fre(z)$, from $(\bbC \cup \{\infty \})\setminus \fre$ onto $\bbD$ which is a bijection with $B_\fre(\infty) = 0$ and positive ``derivative'', $B_\fre'(\infty)$, at $\infty$. Then:

\begin{theorem} [Faber \cite{Faber}] \lb{t1.5} If $\fre$ is a Jordan region with an analytic boundary, then
\begin{equation} \lb{1.17}
\lim_{n \to \infty} T_n(z) B_\fre(z)^n B_\fre'(\infty)^{-n} = 1
\end{equation}
uniformly for $z$ in a neighborhood of the closure of $(\bbC \cup \{\infty \})\setminus \fre$.
\end{theorem}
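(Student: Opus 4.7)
The plan is to study the normalized function $\phi_n(z) := T_n(z)\,B_\fre(z)^n\,B_\fre'(\infty)^{-n}$ on $\Omega := (\bbC\cup\{\infty\})\setminus\fre$. Since $B_\fre$ is holomorphic on $\Omega$ with a simple zero at $\infty$ of leading behavior $B_\fre(z) = B_\fre'(\infty)/z + O(1/z^2)$, the factor $B_\fre^n$ cancels the growth of $T_n = z^n + \cdots$ at $\infty$, so $\phi_n$ is holomorphic on $\Omega$ with $\phi_n(\infty)=1$. On $\partial\fre$ one has $|B_\fre|\equiv 1$, hence $|\phi_n| = |T_n|/C(\fre)^n \le \norm{T_n}_\fre/C(\fre)^n$; and the maximum principle, applied to $T_n\cdot B_\fre^n$ (which stays bounded at $\infty$), extends this bound to all of $\overline{\Omega}$.

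The quantitative heart of the proof is to upgrade this to an \emph{exponential} upper bound $\norm{T_n}_\fre \le C(\fre)^n(1 + O(\rho^n))$ for some $\rho<1$ depending on $\fre$. I would do this by producing a near-optimal competitor, namely the $n$th Faber polynomial $F_n$. Let $\Psi : \{|w|>1\}\to\Omega$ be the inverse of $z\mapsto 1/B_\fre(z)$, so that $\Psi(\infty)=\infty$ and $\Psi'(\infty)=C(\fre)$; then $F_n$ is the unique monic polynomial of degree $n$ for which $F_n(\Psi(w)) - w^n$ vanishes at $\infty$. Analyticity of $\partial\fre$ allows $\Psi$ to be continued conformally across the unit circle to an annulus $\{|w|>\rho\}$ with $\rho<1$, and deforming the Cauchy integral that represents $F_n(\Psi(w)) - w^n$ into this larger annulus yields $F_n(\Psi(w)) = w^n + O(\rho^n)$ uniformly for $|w|\ge 1$. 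Restricting to $|w|=1$ (whose image under $\Psi$ is $\partial\fre$) gives $\norm{F_n}_\fre = C(\fre)^n(1+O(\rho^n))$, and the extremal property of $T_n$ forces $\norm{T_n}_\fre \le \norm{F_n}_\fre$.

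Feeding this back, $\sup_{\overline{\Omega}}|\phi_n|\to 1$. Montel plus the maximum modulus principle forces every subsequential limit of $\phi_n$ on $\Omega$ to be identically $1$ (holomorphic, bounded by $1$, value $1$ at $\infty$), giving $\phi_n\to 1$ uniformly on $\overline{\Omega}$. To promote this to uniform convergence on a genuine complex neighborhood of $\overline{\Omega}$, extend $B_\fre$ across the analytic curve $\partial\fre$ by Schwarz reflection to a domain $U\supset\partial\fre$ where $|B_\fre|\le r^{-1}$, choosing $r\in(\rho,1)$. Then $\phi_n$ extends to $\Omega\cup U$, and on $U$ admits the crude bound $|\phi_n|\le \norm{T_n}_\fre\,r^{-n}/C(\fre)^n = r^{-n}(1+O(\rho^n))$. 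A Hadamard three-lines interpolation (or equivalently Cauchy's integral formula applied along a contour in $U$ on the interior side of $\partial\fre$) between this bound in $U$ and $|\phi_n - 1|\to 0$ uniformly on $\partial\fre$, using that $\rho<r$, yields uniform convergence $\phi_n\to 1$ on a slightly thinner neighborhood of $\overline{\Omega}$.

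The principal obstacle is Step 2, the exponential rate $\norm{F_n}_\fre = C(\fre)^n(1+O(\rho^n))$; this is exactly where analyticity of $\partial\fre$ enters essentially, through the continuation of $\Psi$ past the unit circle and the resulting contour deformation in the Cauchy representation of the Faber remainder. Without exponential decay of $\norm{T_n}_\fre/C(\fre)^n - 1$—which the FFS theorem alone would not give—one cannot overcome the $r^{-n}$ blow-up on the reflected side and therefore cannot obtain uniform convergence on a full open neighborhood of $\overline{\Omega}$.
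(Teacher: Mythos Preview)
The paper does not prove this theorem; it is quoted as a classical result of Faber, with a citation and a few remarks, and no argument is given anywhere in the text. So there is no ``paper's proof'' to compare against, and I can only comment on the soundness of your sketch.

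The overall strategy---bound $\norm{T_n}_\fre$ via Faber polynomials, then use a normal-families argument---is the classical one and the first two steps are fine (apart from a harmless normalization slip: if your $F_n$ is monic then it is $F_n(\Psi(w))-(C(\fre)w)^n$, not $F_n(\Psi(w))-w^n$, that vanishes at $\infty$). The gap is in the passage from $\sup_{\overline\Omega}|\phi_n|\to 1$ to ``$\phi_n\to 1$ uniformly on $\overline\Omega$.'' Montel gives convergence only on compact subsets of the open set on which the family is uniformly bounded; that open set is $\Omega$, and $\partial\fre\not\subset\Omega$. You cannot enlarge to the reflected domain, because there the bound is $r^{-n}$, not uniform. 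In fact the implication you use is false in general: with $\delta_n\to 0$ the functions $\phi_n(w)=(1-1/w)^{\delta_n/\log 2}$ on $\{|w|\ge 1\}$ satisfy $\phi_n(\infty)=1$ and $\sup_{|w|=1}|\phi_n|\le e^{\delta_n}\to 1$, yet $\phi_n(1)=0$ for every $n$. (This example has a branch point on the boundary, which is exactly what the analytic extension in your problem rules out; but your Montel step does not invoke the extension.)

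Your Hadamard/Cauchy step then presupposes the unproven boundary convergence, and even granting it qualitatively, interpolation against an $r^{-n}$ bound on the inner contour requires an \emph{exponential} rate $|\phi_n-1|=O(\sigma^n)$ on $\partial\fre$ with $\sigma<r$; the phrase ``using that $\rho<r$'' suggests you believe you have $O(\rho^n)$, but you have only shown $|\phi_n|\le 1+O(\rho^n)$, which is strictly weaker. The standard way to close the argument is to use the Cauchy representation you already mention to get directly $\tilde F_n(\Psi(w))=(C(\fre)w)^n+O(\rho'^n)$ uniformly on $\{|w|\ge\rho''\}$ for $\rho<\rho'<\rho''<1$ (this already proves \eqref{1.17} with $\tilde F_n$ in place of $T_n$ on a neighborhood of $\overline\Omega$), and then to prove separately that $\norm{T_n-\tilde F_n}_\fre$ is exponentially small relative to $C(\fre)^n$. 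That last estimate does not follow from $\norm{T_n}_\fre\le\norm{\tilde F_n}_\fre$ alone; it needs a quantitative near-uniqueness (strong stability) argument for the Chebyshev extremal problem, which is the missing ingredient in your outline.
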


\begin{remarks} 1. Since the curve is assumed analytic, $B_\fre(z)$ has a continuation into a neighborhood of the curve.

\smallskip

2. Since $B_\fre$ maps the curve to $\partial \bbD$, and $G_\fre$ is unique, on $(\bbC \cup \{\infty \})\setminus \fre$, we have that
\begin{equation} \lb{1.18}
|B_\fre(z)| = \exp(-G_\fre(z))
\end{equation}
so that near $\infty$, we have that
\begin{equation} \lb{1.19}
B_\fre(z) = C(\fre) z^{-1} + \mathrm{O}(|z|^{-2})
\end{equation}
This implies that $B_\fre'(\infty) = C(\fre)$; thus, Faber's result implies that $\lim_{n \to \infty} \norm{T_n} C(\fre)^{-n} = 1$, a strong version of the FFS theorem.

\smallskip

3.  We call \eqref{1.17} \textit{Szeg\H{o} asymptotics} after Szeg\H{o}'s famous result \cite{Sz1920} on the aysymptotics of OPUC.  Since Faber's paper was earlier than Szeg\H{o}'s, this naming is perhaps unfair, but the term Szeg\H{o} asymptotics is so common, we use it in this case also.
\end{remarks}
In 1969, Widom wrote a 100+ page brilliant, seminal work \cite{Widom} on the asymptotics of Chebyshev and orthogonal polynomials associated to a set $\fre$, where $\fre$ is the union of a finite number of Jordan regions with $C^{2+}$ boundary and $C^{2+}$ Jordan arcs (i.e., not closed, simple curves).  As in Faber's case, the polynomials are the same whether one takes Jordan regions or their boundaries, as Widom does.

Widom began by looking for the replacement for $B_\fre(z)$ in \eqref{1.17}/\eqref{1.18}.  Since $G_\fre(z)$ is harmonic on $\bbC \setminus \fre$, it has a local harmonic conjugate so one can locally define an analytic function, $B_\fre(z)$, on $(\bbC \cup \{\infty \})\setminus \fre$ obeying \eqref{1.18} ($\infty$ is a removable singularity if one sets $B_\fre(\infty) = 0$.) $B_\fre(z)$ is determined by \eqref{1.18}, up to a phase which we can fix by demanding \eqref{1.19} near $\infty$.

$B_\fre(z)$ can be continued along any curve lying in $(\bbC \cup \{\infty \})\setminus \fre$ and, by the monodromy theorem, the continuation is the same for homotopic curves.  Since $G_\fre$ is continuous, only the phase can change, i.e., the phase change is associated with a character, $\chi_\fre$, of the fundamental group of $(\bbC \cup \{\infty \})\setminus \fre$.  The character is non-trivial if $\fre$ is not connected (up to polar sets) -- indeed, if a curve loops once around a subset $\frg$ of $\fre$, the phase change in $B_\fre$ is $\exp(-2\pi i \rho_\fre(\frg))$; see Theorem \ref{t2.7}.

There is a language introduced by Sodin--Yuditskii \cite{SY} for doing the bookkeeping for such functions.  It relies on the fact that the universal cover of $(\bbC \cup \{\infty \})\setminus \fre$ is $\bbD$.  Using the notation from our presentation of this machinery \cite{CSZ1}, there is a Fuchsian group, $\Gamma$, of M\"{o}bius transformations on $\bbD$, and a map $\x(z)$ from $\bbD \to (\bbC \cup \{\infty \})\setminus \fre$ which is automorphic (i.e., invariant under $\Gamma$).

$\x$ is a covering map, so a local bijection.  Its ``inverse'', $\z(x)$, is a multivalued analytic function which is not character automorphic -- rather its values are an orbit of the group $\Gamma$. $\Gamma$ is such that $\sum_{\gamma \in \Gamma} (1-\vert\gamma(0)\vert) < \infty$ so one can form the Blaschke product $B(z) = \prod_{\gamma \in \Gamma} b(z,\gamma(0))$.  In this language, one should use a complex variable, $x$, on $(\bbC \cup \{\infty \})\setminus \fre$ in which case one has that $B_\fre (x) = B(\z(x))$ and the object whose asymptotics we should look at is $T_n(\x(z)) B(z)^n C(\fre)^{-n}$ on $\bbD$.  While we feel this language should be in the back of one's mind, we will do our analysis with multivalued functions on $\Omega \equiv (\bbC \cup \{\infty \})\setminus \fre$, as Widom did.

So Widom looked at $T_n(z) B_\fre(z)^n C(\fre)^{-n}$.  Unlike the simply connected case of $\Omega$ studied by Faber, this cannot have a pointwise limit because the character of this character automorphic function is $\chi_\fre^n$ which is not constant!  Instead Widom found a good candidate for the asymptotics:

%alice added this command

\enlargethispage*{12pt}

\begin{theorem} (Widom \cite{Widom}) \lb{t1.6} Let $\fre$ be a finite union of disjoint smooth Jordan regions and arcs. For every character, $\chi$, of the fundamental group of $\Omega$ there is a character automorphic function with that character, $F(z,\chi)$, on $\Omega$ which minimizes $\norm{f}_\Omega$ among all character automorphic functions, f, with that character and which obey $f(\infty,\chi) = 1$.  Moreover, this minimizer is unique and it and its $\norm{\cdot}_\Omega$ are continuous in $\chi$ (the functions in the topology of uniform convergence of compact subsets of the universal cover of $\Omega$).
\end{theorem}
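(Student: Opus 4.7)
The plan is to lift the extremal problem to the universal cover $\bbD$: character-automorphic functions on $\Omega$ with character $\chi$ correspond bijectively to bounded analytic $f$ on $\bbD$ obeying $f \circ \gamma = \chi(\gamma) f$ for all $\gamma \in \Gamma$, and with the standard normalization $\x(0) = \infty$ the condition $F(\infty,\chi) = 1$ becomes $f(0) = 1$. Set
\begin{equation*}
M(\chi) := \inf\bigl\{\|f\|_\infty : f \in H^\infty(\bbD),\ f \circ \gamma = \chi(\gamma) f \text{ for all } \gamma \in \Gamma,\ f(0) = 1\bigr\}.
\end{equation*}
The theorem amounts to showing the infimum is attained, uniquely, and that the minimizer and its norm depend continuously on $\chi$.

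For existence, first I would verify the admissible class is nonempty with $M(\chi)$ bounded locally uniformly in $\chi$. Because $\fre$ has finitely many components, $\Gamma$ is finitely generated, the Blaschke sum $\sum_{\gamma \in \Gamma}(1-|\gamma(0)|)$ is finite, and one can write down explicit bounded character-automorphic functions as (products of) Blaschke products $\prod_\gamma b(z,\gamma(z_0))$ whose characters depend continuously on the orbit parameters $z_0$ and together realize every character; normalizing at $0$ furnishes an admissible function for each $\chi$ with a sup-norm bound continuous in $\chi$. A minimizing sequence is then uniformly bounded in $H^\infty(\bbD)$, so Montel's theorem supplies a locally uniform subsequential limit $F$; $F(0) = 1$ and character-automorphy pass to the limit pointwise, and lower semicontinuity of $\|\cdot\|_\infty$ yields $\|F\|_\infty \le M(\chi)$, so $F$ realizes $M(\chi)$.

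The crux is uniqueness, which I would reduce to the \emph{inner property} of extremals: every minimizer $f$ satisfies $|f(\zeta)| = M(\chi)$ for a.e.\ $\zeta \in \partial\bbD$. Were this to fail on a boundary set of positive Lebesgue measure, one could divide $f$ by a non-constant bounded character-automorphic outer function that boosts $|f|$ on the deficient set (built so as to preserve character-automorphy by suitable averaging over the $\Gamma$-action), producing an admissible function with strictly smaller sup norm and contradicting extremality. Granted the inner property, if $f_1, f_2$ are two extremals, then $g := (f_1 + f_2)/2$ is admissible; since $|f_1| = |f_2| = M(\chi)$ a.e.\ on $\partial\bbD$, the triangle inequality gives $|g(\zeta)| \le M(\chi)$ a.e., with equality only where $f_1(\zeta) = f_2(\zeta)$. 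Applying the inner property to the extremal $g$ forces equality a.e.\ on $\partial\bbD$, whence $f_1 \equiv f_2$ by boundary uniqueness in $H^\infty$.

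Continuity in $\chi$ is then a second Montel-plus-uniqueness argument: for $\chi_n \to \chi$, the uniform upper bound on $M$ makes $\{F(\cdot,\chi_n)\}$ a normal family, and any locally uniform subsequential limit $F^*$ has character $\chi$ (the functional relation passes to the limit since $\chi_n(\gamma) \to \chi(\gamma)$ for each $\gamma$) and is admissible and extremal, so uniqueness identifies $F^* = F(\cdot,\chi)$; this delivers both the convergence of $F(\cdot,\chi_n)$ on compacts of the cover and the continuity of $M(\chi)$. The main obstacle is the inner property: producing the character-automorphic outer modification whose insertion into $f$ strictly reduces the sup norm is where the Widom hypothesis (finitely many components with smooth boundary) enters essentially, via the fact that $\Omega$ carries enough bounded analytic character-automorphic functions to freely modify outer factors.
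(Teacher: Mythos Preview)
The paper does not give its own proof of this theorem: it is stated with the attribution ``(Widom \cite{Widom})'' and simply cited as an input from Widom's 1969 paper, so there is no in-paper argument to compare your proposal against. Your sketch is a reasonable outline of the standard $H^\infty$ variational approach (normal families for existence, inner property for uniqueness, Montel-plus-uniqueness for continuity), and it is in spirit what Widom does, though his actual proof in \cite{Widom} is organized rather differently and works directly on the multiply connected domain via conformal maps and theta-function-type constructions rather than on the Fuchsian cover.

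If you intend this as a self-contained proof, the acknowledged gap at the inner-property step is real. The phrase ``divide $f$ by a non-constant bounded character-automorphic outer function that boosts $|f|$ on the deficient set (built so as to preserve character-automorphy by suitable averaging over the $\Gamma$-action)'' hides the essential difficulty: one must produce, for an arbitrary $\Gamma$-invariant boundary set of positive measure, a nontrivial bounded outer function on $\bbD$ that is \emph{genuinely character-automorphic} (not merely $|\cdot|$-automorphic) and whose modulus is $<1$ precisely on that set. Averaging $\log|h|$ over $\Gamma$ can make the modulus automorphic, but promoting this to a character-automorphic analytic function with the right boundary behavior is exactly where one needs the Widom-type hypothesis, and the construction is not a one-liner. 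Also, in your continuity paragraph you use the uniform upper bound on $M(\chi)$ to get normality and then identify limits via uniqueness; to conclude $M(\chi_n)\to M(\chi)$ you should explicitly combine the upper bound (giving $\limsup M(\chi_n)\le M(\chi)$ from the continuous family of trial functions) with the lower bound (any limit of extremals is admissible, so $M(\chi)\le\liminf M(\chi_n)$), since the subsequential-limit argument alone only gives one direction.
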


\begin{remarks} 1.  We should refer to $F$ as a function on the universal cover of $\Omega$, not $\Omega$.

\smallskip

2. Continuity in $\chi$ and uniqueness are intimately related.

\smallskip

3. We will use $F_n(z)$ for the function $F(z,\chi_\fre^n)$.

\end{remarks}

The \textit{Widom surmise} is the notion that
\begin{equation} \lb{1.20}
\lim_{n \to \infty} \left[\frac{T_n(z) B_\fre(z)^n}{C(\fre)^n} - F_n(z)\right] = 0
\end{equation}
When it holds uniformly on compact subsets of the universal cover of $\Omega$, we will say that $\fre$ has \textit{Szeg\H{o}--Widom asymptotics}.

Widom proved two results about the asymptotics of $T_n$.  The first involves the situation where there are no arcs -- but only regions:

\begin{theorem} (Widom \cite{Widom}) \lb{t1.7}  Let $\fre$ be the union of a finite number of disjoint Jordan regions with smooth boundaries. Then $\fre$ has Szeg\H{o}--Widom asymptotics, i.e., \eqref{1.20} holds uniformly on compact subsets of the universal cover of $\Omega$.  Moreover,
\begin{equation} \lb{1.21}
\lim_{n \to \infty} \frac{\norm{T_n}_\fre}{C(\fre)^n \norm{F_n}_\Omega} = 1
\end{equation}
\end{theorem}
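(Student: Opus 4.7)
The plan is to first establish the norm identity \eqref{1.21}, and then use it together with uniqueness and continuity of the extremal functions to bootstrap up to the pointwise Szeg\H{o}--Widom asymptotics \eqref{1.20}. Write $t_n := \|T_n\|_\fre/C(\fre)^n$ and $f_n(z) := T_n(z) B_\fre(z)^n/C(\fre)^n$. Since $T_n$ is single-valued while $B_\fre$ carries the character $\chi_\fre$, $f_n$ is character automorphic with character $\chi_\fre^n$; by \eqref{1.19}, $f_n(\infty)=1$. Because $|B_\fre|=e^{-G_\fre}\le 1$ on $\Omega$, with equality on the regular set $\fre$, we have $\|f_n\|_\Omega = t_n$, and the extremality stated in Theorem~\ref{t1.6} immediately gives $\|F_n\|_\Omega\le t_n$. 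This is one half of \eqref{1.21}.

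The reverse inequality $t_n \le (1+o(1))\|F_n\|_\Omega$ is the heart of the argument. Consider
\[
H_n(z) := F_n(z)\, C(\fre)^n / B_\fre(z)^n .
\]
The characters of $F_n$ and $B_\fre^{-n}$ cancel, so $H_n$ is single-valued on $\Omega$, analytic off $\infty$, with a pole of order exactly $n$ at $\infty$ whose leading coefficient is one. On $\partial\fre$, $|B_\fre|=1$, whence $|H_n|\le C(\fre)^n\|F_n\|_\Omega$. The main technical step is to produce monic polynomials $P_n$ of degree $n$ satisfying
\[
\|P_n - H_n\|_{\partial\fre} = \epsilon_n \, C(\fre)^n \|F_n\|_\Omega, \qquad \epsilon_n \to 0 .
\]
This can be done via a Faber/Cauchy-integral decomposition: the $C^{2+}$-smoothness of $\partial\fre$ allows $H_n$ to be split along each boundary curve into its polynomial part (the principal Laurent expansion at $\infty$ truncated to degree $n$) and a correction analytic up to $\partial\fre$ from the exterior; the correction is then further approximated by polynomials of degree $n$, and $P_n$ is assembled accordingly. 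Once $P_n$ is in hand, the maximum modulus principle on each Jordan region of $\fre$ gives $\|P_n\|_\fre = \|P_n\|_{\partial\fre} \le (1+\epsilon_n) C(\fre)^n\|F_n\|_\Omega$, and extremality of $T_n$ yields $\|T_n\|_\fre \le \|P_n\|_\fre$, so that $t_n \le (1+\epsilon_n)\|F_n\|_\Omega$. Combined with the lower bound, $t_n/\|F_n\|_\Omega \to 1$, which is \eqref{1.21}.

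The remaining step is to upgrade \eqref{1.21} to the pointwise asymptotics \eqref{1.20}. It suffices to show that every subsequence of $\{f_n - F_n\}$ has a further subsequence tending to $0$ uniformly on compact subsets of the universal cover of $\Omega$. Given any subsequence $n_k$, compactness of the character group produces a sub-subsequence along which $\chi_\fre^{n_k} \to \chi^*$. The continuity claim in Theorem~\ref{t1.6} gives $F_{n_k} \to F^* := F(\cdot, \chi^*)$ uniformly on compacts and $\|F_{n_k}\|_\Omega \to \|F^*\|_\Omega$. The $f_{n_k}$, being uniformly bounded in sup norm (by $t_{n_k} \to \|F^*\|_\Omega$ from the previous step), form a normal family on the universal cover; any subsequential limit $f^*$ is character automorphic with character $\chi^*$, satisfies $f^*(\infty)=1$, and obeys $\|f^*\|_\Omega \le \|F^*\|_\Omega$, so uniqueness of the minimizer forces $f^* = F^*$. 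Hence $f_{n_k} \to F^*$ and $f_{n_k} - F_{n_k} \to 0$, proving \eqref{1.20}.

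The main obstacle is the polynomial-approximation estimate for $H_n$, namely producing $P_n$ with $o(1)$ relative error uniformly in $n$. Both the Jordan-region hypothesis (no arcs) and the $C^{2+}$ smoothness are essential here: for arcs, $H_n$ would have jumps across $\partial\fre$ and the clean Faber-style approximation would fail, requiring instead the more delicate machinery associated with the Szeg\H{o} function of a two-sided boundary value problem; and without some smoothness of $\partial\fre$ the Cauchy-integral decomposition loses the decay needed to force $\epsilon_n \to 0$.
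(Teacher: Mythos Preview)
The paper does not prove Theorem~\ref{t1.7}; it is quoted as a result of Widom \cite{Widom} and used (together with Theorems~\ref{t1.6} and \ref{t1.8}) as an input to the proof of Theorem~\ref{t1.9} in Section~\ref{s5}. So there is no ``paper's own proof'' to compare against. That said, your outline is recognizably Widom's strategy: the easy direction $\|F_n\|_\Omega \le t_n$ comes from using $f_n$ as a trial function, the hard direction comes from approximating the single-valued function $H_n = F_n\,C(\fre)^n B_\fre^{-n}$ by a monic polynomial on $\partial\fre$, and the passage from \eqref{1.21} to \eqref{1.20} goes by the subsequence/compactness-of-characters/uniqueness-of-minimizer scheme. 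That last step is exactly the mechanism the present paper uses in Section~\ref{s5} to deduce Theorem~\ref{t1.9} from Theorem~\ref{t1.8}, so at least that portion of your argument is in line with the paper's methods.

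The substantive gap in your proposal is the polynomial-approximation step. Your one-paragraph description (``split along each boundary curve into its polynomial part \dots\ and a correction \dots\ further approximated by polynomials'') is where essentially all of Widom's analytic work lies, and as written it is an assertion rather than an argument. Two specific points would need to be nailed down: first, why the correction term is $o(C(\fre)^n\|F_n\|_\Omega)$ uniformly in $n$ --- this requires quantitative control of the Cauchy integrals over the other boundary components and is where the $C^{2+}$ hypothesis does real work; second, why the assembled $P_n$ remains monic of degree exactly $n$ after the secondary approximation of the correction (if that approximation uses polynomials of degree $n$, you risk perturbing the leading coefficient). Both issues are handled in \cite{Widom}, but they are the content of the theorem, not a routine verification.
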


The second concerns finite gap sets in $\bbR$:

\begin{theorem} (Widom \cite{Widom}) \lb{t1.8} Let $\fre$ be a finite gap subset of $\bbR$.  Then
\begin{equation} \lb{1.22}
\lim_{n \to \infty} \frac{\norm{T_n}_\fre}{C(\fre)^n \norm{F_n}_\Omega} = 2
\end{equation}
\end{theorem}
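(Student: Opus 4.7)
The plan is to lift the polynomial problem into the character-automorphic setting of Theorem~\ref{t1.6}. Set $h_n(z) := T_n(z) B_\fre(z)^n / C(\fre)^n$; viewed on the universal cover of $\Omega := (\bbC \cup \{\infty\}) \setminus \fre$, it is character-automorphic with character $\chi_\fre^n$, satisfies $h_n(\infty) = 1$, and (from $|B_\fre| \equiv 1$ on $\fre$ together with the maximum principle) has $\norm{h_n}_\Omega = \norm{T_n}_\fre / C(\fre)^n$. Minimality of $F_n$ already gives the crude bound $\norm{T_n}_\fre \ge C(\fre)^n \norm{F_n}_\Omega$; the task is to improve the $1$ to $2$ in the limit. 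As a preliminary I would choose $B_\fre$ so that $\overline{B_\fre(\bar z)} = B_\fre(z)$; since reflection of a loop about $\bbR$ sends the character to its conjugate, $\chi_\fre^n$ is invariant under this operation and uniqueness in Theorem~\ref{t1.6} forces $\overline{F_n(\bar z)} = F_n(z)$. Writing $F(\cdot,\chi)$ for the Widom minimizer at character $\chi$, the minimizer at the conjugate character coincides with $z \mapsto \overline{F_n(\bar z)}$, so in particular $\norm{F(\cdot,\chi_\fre^{-n})}_\Omega = \norm{F_n}_\Omega$.

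For the upper bound, I would introduce the trial function
\[
  P_n(z) := \frac{C(\fre)^n F_n(z)}{B_\fre(z)^n} + C(\fre)^n B_\fre(z)^n F(z,\chi_\fre^{-n}).
\]
Each summand has trivial total character and so descends to a single-valued analytic function on $\Omega$. Since $B_\fre \sim C(\fre)/z$ at $\infty$ and $F_n(\infty) = F(\infty,\chi_\fre^{-n}) = 1$, the first summand contributes $z^n + O(z^{n-1})$ and the second is $O(1)$, so $P_n$ is monic of degree $n$ at infinity. Using the reality symmetry of $F_n$ together with $B_\fre(x \pm i0) = e^{\pm i\theta(x)}$ on $\fre$, both boundary values of $P_n$ reduce to the same real quantity $2C(\fre)^n \Real\bigl(F_n(x+i0) e^{-in\theta(x)}\bigr)$. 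Hence $P_n$ extends analytically across $\fre$, and combined with polynomial growth at infinity, $P_n$ is a monic polynomial of degree $n$. The pointwise bound $|P_n(x)| \le 2 C(\fre)^n \norm{F_n}_\Omega$ on $\fre$ then yields $\norm{T_n}_\fre \le \norm{P_n}_\fre \le 2 C(\fre)^n \norm{F_n}_\Omega$.

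For the lower bound, I would pass to a subsequence along which $\chi_\fre^n \to \chi_\infty$ in the (compact) character group and $h_n \to h_\infty$ normally on the universal cover, using the uniform upper bound just derived together with the continuity of the Widom minimizer in its character. Lifting to the hyperelliptic Riemann surface $\Sigma$ of $\fre$ (branched at the endpoints of the intervals of $\fre$), the constraint that $h_n$ comes from a monic polynomial forces symmetric pole behavior of order $n$ at \emph{both} points of $\Sigma$ over $\infty$, whereas $F(\cdot,\chi_\infty)$ contributes such a pole at only one of them. The alternation theorem (Theorem~\ref{t1.1}) supplies $n+1$ points in $\fre$ at which $|h_n|$ attains its sup-norm with alternating signs of $T_n$, and this rigidity forces $h_\infty$ to decompose as $F(\cdot,\chi_\infty) + G_\infty$ with $G_\infty$ character-automorphic of norm $\norm{F(\cdot,\chi_\infty)}_\Omega$ and boundary values in phase with $F(\cdot,\chi_\infty)$ on $\fre$. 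Consequently $\norm{h_\infty}_\Omega = 2 \norm{F(\cdot,\chi_\infty)}_\Omega$, which gives the required lower bound.

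The principal obstacle is the lower bound: the upper-bound construction is essentially algebraic once the reality symmetry is recorded, whereas the matching lower bound requires the normal-families argument together with a careful analysis, on the hyperelliptic double cover, of the two-term ``polar'' decomposition forced on $h_\infty$ by the fact that each $T_n$ is a polynomial. It is precisely the two-sheeted structure of $\Sigma$ -- reflecting that $\fre$, as a subset of $\bbR$, has two sides -- that creates the factor of $2$, in contrast to the Jordan region case of Theorem~\ref{t1.7}.
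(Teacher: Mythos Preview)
The paper does not itself prove Theorem~\ref{t1.8}; it is quoted as a result of Widom~\cite{Widom} and then used, together with Theorem~\ref{t1.6}, as an input to the proof of Theorem~\ref{t1.9} in Section~\ref{s5}. So there is no proof here to compare against, and your proposal has to be assessed on its own merits.

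Your upper-bound construction is essentially correct and in fact gives more than needed: the trial function $P_n$ has trivial character, matching (real) boundary values from the two sides of $\fre$ by the symmetry you set up, and the right growth at $\infty$, so it is a monic polynomial of degree $n$ with $\norm{P_n}_\fre \le 2C(\fre)^n\norm{F_n}_\Omega$. This yields the non-asymptotic bound $\norm{T_n}_\fre \le 2C(\fre)^n\norm{F_n}_\Omega$, hence $\limsup \le 2$ in \eqref{1.22}. (One should check that $F_n$ has a.e.\ boundary values on $\fre$ so that the continuation across $\fre$ via matching boundary values is legitimate; for finite gap sets this is standard $H^\infty$ theory.)

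The lower bound, however, has a genuine gap. The key sentence---``this rigidity forces $h_\infty$ to decompose as $F(\cdot,\chi_\infty)+G_\infty$ with $G_\infty$ character-automorphic of norm $\norm{F(\cdot,\chi_\infty)}_\Omega$ and boundary values in phase with $F(\cdot,\chi_\infty)$''---is asserted but not argued. Alternation gives $n+1$ points where $|T_n|=\norm{T_n}_\fre$, but after multiplying by $B_\fre^n$ and passing to a normal limit it is unclear what structure survives: the alternation points move with $n$, their number diverges, and nothing you have written pins down a two-term decomposition of $h_\infty$. Your remark about poles on the hyperelliptic surface is also off: $h_n(\infty)=1$, so $h_n$ has no pole at either point over $\infty$; it is $T_n$ that has a pole of order $n$ at both, and $B_\fre^n$ cancels exactly one of them, but this observation by itself does not produce a lower bound on $\norm{h_\infty}_\Omega$. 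What is actually required is an argument that any normal limit of the $h_n$ must have $\norm{\cdot}_\Omega$ at least $2\norm{F(\cdot,\chi_\infty)}_\Omega$, and that is precisely the hard analytic content of Widom's proof that you have not supplied.
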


Widom also conjectured that one had Szeg\H{o}--Widom asymptotics in this case.  At first sight this seems surprising -- \eqref{1.20} suggests that one might expect the limit in \eqref{1.22} to be 1 as it is in \eqref{1.21}.  Widom was clearly motivated by the example of $\fre = [-1,1]$ where both \eqref{1.20} and \eqref{1.22} hold!  Indeed, in that case $\x(z)$ is one half the Joukowski map, $\x(z) = \tfrac{1}{2}(z + z^{-1})$, and $B_\fre(x) = \z(x)$, the inverse of the one half the Joukowski map, i.e., $B_\fre(z) = z - \sqrt{z^2 - 1}$ and $1/B_\fre(z) = z + \sqrt{z^2 - 1}$.  The familiar formula for the Chebyshev polynomials in this case (a multiple the usual Chebyshev polynomials of the first kind with the multiple chosen to make the polynomials monic) is:
\begin{equation} \lb{1.23}
T_n(\cos(\theta)) = 2^{-n+1} \cos(n \theta), \qquad T_n(z) = 2^{-n} [B_\fre^n(z)+B_\fre^{-n}(z)]
\end{equation}
This implies that $\norm{T_n}_\fre = 2^{-n+1} = 2 C(\fre)^n$ since $C([-1,1]) = {1}/{2}$. This is consistent with the FSS Theorem and saturates Schiefermayr's bound \eqref{1.12}.  Since \eqref{1.18} holds and $G_\fre$ is $0$ (resp. $>0$) on $\fre$ (resp. off $\fre$), we have that $|B_\fre|$ is $1$ (resp. $<1$) on $\fre$ (resp. off $\fre$).  Thus off $\fre$, only $B_\fre^{-n}$ contributes to the asymptotics while on $\fre$, there are points with $B_\fre(z) = 1$ so both terms contribute and the norm is twice as large as one might have expected.  This explains where Widom's conjecture came from.  Our second main result here is a proof of this conjecture:

\enlargethispage*{12pt}
\begin{theorem} \lb{t1.9}  The Chebyshev polynomials for any finite gap set in $\bbR$ have Szeg\H{o}--Widom asymptotics. \end{theorem}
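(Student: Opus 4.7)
The strategy is to exploit the identification, foreshadowed in the introduction, of the set $\fre_n := T_n^{-1}([-\|T_n\|_\fre,\|T_n\|_\fre])$ with the spectrum of a period-$n$ reflectionless two-sided Jacobi matrix whose Floquet discriminant is $\Delta_n := 2T_n/\|T_n\|_\fre$. I first verify that $T_n$ is also the Chebyshev polynomial of $\fre_n$: the alternation set for $T_n$ in $\fre$ remains an alternation set in $\fre_n \supset \fre$, so by the converse direction of Theorem~\ref{t1.1}, $\|T_n\|_\fre = \|T_n\|_{\fre_n} = 2C(\fre_n)^n$ (saturating \eqref{1.12}). Since $\rho_{\fre_n}$ assigns each band of $\fre_n$ an integer multiple of $1/n$, the character $\chi_{\fre_n}^n$ is trivial, so $B_{\fre_n}^n$ is single-valued on $\Omega_n := (\bbC\cup\{\infty\})\setminus\fre_n$. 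A comparison of modulus on $\partial\Omega_n$ and of asymptotics at $\infty$ identifies $B_{\fre_n}^n$ with the small Bloch multiplier $(\Delta_n - \sqrt{\Delta_n^2-4})/2$, yielding the exact identity
\[
T_n(z) = C(\fre_n)^n\bigl[B_{\fre_n}(z)^{-n} + B_{\fre_n}(z)^{n}\bigr].
\]

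Multiplying by $B_\fre(z)^n/C(\fre)^n$ and setting
\[
A_n := \bigl(C(\fre_n)/C(\fre)\bigr)^n (B_\fre/B_{\fre_n})^n, \qquad E_n := \bigl(C(\fre_n)/C(\fre)\bigr)^n (B_\fre B_{\fre_n})^n,
\]
I obtain $T_n B_\fre^n/C(\fre)^n = A_n + E_n$. For the error term: any compact $K\subset\Omega$ lies in $\Omega_n$ for $n$ large (Hausdorff convergence $\fre_n\to\fre$), and $|B_\fre B_{\fre_n}|\le e^{-2\delta}$ uniformly on $K$ for some $\delta>0$ (using locally uniform convergence $G_{\fre_n}\to G_\fre$ off $\fre$). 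Meanwhile $(C(\fre_n)/C(\fre))^n = \|T_n\|_\fre/(2C(\fre)^n)$, which Theorem~\ref{t1.8} gives as $(1+o(1))\|F_n\|_\Omega$; and $\|F_n\|_\Omega$ is bounded by continuity of $\chi\mapsto\|F(\cdot,\chi)\|_\Omega$ on the compact character torus (Theorem~\ref{t1.6}). Hence $E_n\to 0$ uniformly on compact subsets of the universal cover of $\Omega$.

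For the main term I argue by normal families and uniqueness. Both $A_n$ and $F_n$ take value $1$ at $\infty$, and on loops in compact subsets of $\Omega\cap\Omega_n$ both have character $\chi_\fre^n$ (since $B_{\fre_n}^n$ is single-valued). By the maximum principle, $\|A_n\|_{\Omega_n} = (C(\fre_n)/C(\fre))^n$, attained on $\fre$. Given any subsequence, extract a further one so that $\chi_\fre^{n_k}\to\chi_\infty$ in the compact character torus; by Theorem~\ref{t1.6}, $F_{n_k}\to F_\infty := F(\cdot,\chi_\infty)$ uniformly on compacts of the universal cover, and by a further extraction from the uniformly bounded $\{A_{n_k}\}$ one gets $A_{n_k}\to A_\infty$. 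Then $A_\infty$ is character automorphic on $\Omega$ with character $\chi_\infty$, value $1$ at $\infty$, and
\[
\|A_\infty\|_\Omega \le \liminf_k \|A_{n_k}\|_{\Omega_{n_k}} = \liminf_k (C(\fre_{n_k})/C(\fre))^{n_k} = \|F_\infty\|_\Omega
\]
by Theorem~\ref{t1.8}. Uniqueness of the Widom minimizer (Theorem~\ref{t1.6}) forces $A_\infty = F_\infty$, so $A_{n_k} - F_{n_k}\to 0$. Since every subsequence has a sub-subsequence tending to $0$, the full sequence $A_n - F_n \to 0$, and together with $E_n\to 0$ this establishes \eqref{1.20}.

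The principal obstacle is the exact identity in the first step when $\fre_n$ is degenerate (closed gaps, i.e.\ merged bands arising from coincident critical values at $\pm\|T_n\|_\fre$): there the hyperelliptic structure associated to $\sqrt{\Delta_n^2-4}$ has lower genus and extra care is needed to verify triviality of $\chi_{\fre_n}^n$ and the identification of $B_{\fre_n}^n$ with the correct branch. A secondary subtlety is character matching: $A_n$ naturally carries character $\chi_\fre^n$ relative to $\pi_1(\Omega_n)$ rather than $\pi_1(\Omega)$, but the "extra" generators of $\pi_1(\Omega_n)$ --- loops around the new bands in $\fre_n\setminus\fre$ --- are invisible on any fixed compact $K\subset\Omega$ once $n$ is large enough that $K\subset\Omega_n$, so the two characters restrict compatibly.
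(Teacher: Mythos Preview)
Your overall architecture coincides with the paper's: the same discriminant identity $T_n = C(\fre_n)^n[B_{\fre_n}^{-n}+B_{\fre_n}^n]$, the same splitting (your $A_n$ is the paper's $H_n$, your $E_n$ is $B_{\fre_n}^{2n}H_n$), and the same normal-families-plus-uniqueness endgame via Theorems~\ref{t1.6} and~\ref{t1.8}. The degeneracy you flag as the ``principal obstacle'' is not one: Section~\ref{s2} proves the identity and the triviality of $\chi_{\fre_n}^n$ uniformly, closed gaps included.

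The genuine gap is your assertion of Hausdorff convergence $\fre_n\to\fre$ (and, equivalently, that $G_{\fre_n}\to G_\fre$ locally uniformly off $\fre$, and that every compact $K\subset\Omega$ eventually lies in $\Omega_n$). This is false. A band of $\fre_n$ sitting in a gap $K=(r,s)$ of $\fre$ contains the unique zero $\zeta_n^{(K)}$ of $T_n$ in $K$, and subsequential limits of these zeros can be interior points of $K$. The paper's Theorem~\ref{t5.1}(a) makes this precise: when $\zeta_{n(j)}^{(K)}\to\zeta_\infty\in K$, the band shrinks (exponentially) but stays centered near $\zeta_\infty$, so $\fre_{n(j)}\not\to\fre$ in Hausdorff, a compact $K'\subset\Omega$ containing $\zeta_\infty$ is never eventually inside $\Omega_{n(j)}$, and $G_{\fre_{n(j)}}\to 0\ne G_\fre$ at $\zeta_\infty$. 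This breaks your argument for $E_n\to 0$ on compacts of $\Omega$, your extraction of a normal-family limit $A_\infty$ on $\Omega$, and your character-matching paragraph, all of which lean on the claim that compacts of $\Omega$ are eventually in $\Omega_n$.

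The paper repairs exactly this point. It first restricts to $\wti\Omega=(\bbC\cup\{\infty\})\setminus\mathrm{cvh}(\fre)$, where $\fre_n\subset\mathrm{cvh}(\fre)$ guarantees everything is defined and $|B_{\fre_n}|\le|\wti B|<1$, so the $E_n$ term dies there; the uniform bound on $L_n$ from Theorem~\ref{t1.4} plus Vitali then reduces \eqref{1.20} to convergence on $\wti\Omega$. To show the limit $H_\infty$ is an admissible competitor in Widom's problem on all of $\Omega$, the paper invokes Theorem~\ref{t5.1}: along a subsequence the stray bands either shrink to isolated points $\{x_\ell\}\subset\Omega$ or retreat to the gap edges; since $H_{n(j)}$ is uniformly bounded on $\Omega_{n(j)}$ by~\eqref{5.6} and the $\Omega_{n(j)}$ exhaust $\Omega\setminus\{x_\ell\}$, Vitali extends $H_\infty$ there, and boundedness makes each $x_\ell$ removable. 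Your proof can be completed along these lines, but not via the Hausdorff-convergence shortcut.
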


The hard work for this result was already done by Widom in proving Theorems~\ref{t1.6} and \ref{t1.8} and in our apriori result  Theorem~\ref{t1.4}.  It seems to us reasonable to think that any Parreau--Widom subset (perhaps with the additional requirement that the direct Cauchy theorem holds -- see the discussion in section 3 of Christiansen \cite {Chr}), of $\bbR$ has Szeg\H{o}--Widom asymptotics. The main issue is extending Theorems~\ref{t1.6} and \ref{t1.8} to that case.  Besides these results, we exploit the connection of Chebyshev polynomials to the spectral theory of periodic Jacobi matrices which we present in Section~\ref{s2}.  In Section~\ref{s3} we discuss several results about root asymptotics and we prove Theorem \ref{t1.4} in Section~\ref{s4} and Theorem~\ref{t1.9} in Section~\ref{s5}.

As an aside we note that the limit 2 in \eqref{1.22} is special to the case of $\fre \subset \bbR$ even though Widom had conjectured the limit was 2 as long as there was at least one arc (and not just regions) included among the Jordan arcs and regions.  Indeed, for the case where $\fre$ is a connected subset of the unit circle, the limit has been computed by Thiran--Detaille \cite{TD} who find it is always strictly between 1 and 2 if the connected set is a proper, non-empty subset.  Moreover, Totik--Yuditskii \cite{TY} have shown the $\limsup$ is strictly less than 2 if at least one Jordan region is included among the components of a set $\fre$ of Widom's class and Totik \cite{Totik14} has shown the $\liminf$ is strictly bigger than 1 if at least one Jordan arc is included among the components of a set $\fre$ of Widom's class.  But it still seems to us a reasonable, albeit difficult, conjecture that every set of Widom's class has Szeg\H{o}--Widom asymptotics.

We would like to thank V. Totik and P. Yuditskii for useful communications.  J.S.C. and M.Z. would like to thank D. Ramakrishnan and T. Soifer for the hospitality of Caltech where much of this work was done.

%alice added this command

\vspace*{-8pt}
%%%%%%%%%%%%%%%%%%%%%%%%%%%%%%%%%%%%%%%%%%%%%%%%%%%%%%%%%%%%%%
\section{Periodic Sets} \lb{s2}
%%%%%%%%%%%%%%%%%%%%%%%%%%%%%%%%%%%%%%%%%%%%%%%%%%%%%%%%%%%%%%
%\iffalse

In this section, we'll see the important role played by the sets, $\fre_n$, of \eqref{1.3} and the related sets:
\begin{equation} \lb{2.1}
%\fre_n^\circ
\stackrel{\circ}{\fre}_n \equiv T_n^{-1} ((-\norm{T_n}_\fre,\norm{T_n}_\fre))
\end{equation}
Clearly, by the definition of $T_n$ and $\fre_n$, we have that
\begin{equation} \lb{2.2}
\fre \subset \fre_n
\end{equation}

We will see that the set $\fre_n$ determines many properties of $T_n$.  In particular,
\begin{equation} \lb{2.3}
\norm{T_n}_\fre = 2 C(\fre_n)^n
\end{equation}
which, by \eqref{2.2}, implies Schiefermayr's bound, \eqref{1.12}.  If $B_n$ is short for $B_{\fre_n}$, we'll also prove (indeed, we'll use this to prove \eqref{2.3})
\begin{equation} \lb{2.4}
\frac{2 T_n (z)}{\norm{T_n}_\fre} = B_n(z)^n + B_n(z)^{-n}
\end{equation}
Given our discussion of Szeg\H{o}--Widom asympotics for $\fre = [-1,1]$, it should not be a surprise that \eqref{2.4} is a significant part of our proof of Theorem \ref{t1.9}.

The equilibrium measure for $\fre_n$ which we'll denote $\rho_n$ will also play a role.  We'll prove that, for any gap $K$ of $\fre$, one has that
\begin{equation} \lb{2.5}
\rho_n(K) \le 1/n
\end{equation}
which will be the key to our proof of Theorem \ref{t1.4}.

An interesting further fact concerns the weight that $\rho_n$ gives to components of $\fre_n$.  We will call a compact set $\frg \subset \bbR$ a \textit{period-$n$ set} if and only if each connected component of $\frg$ has harmonic measure $k/n$ for some $k \in \{1,\ldots,n\}$ (which, of course implies that $\frg$ has at most $n$ components and so is a finite gap set).  We will prove that any $\fre_n$ is a period-$n$ set and that conversely, if $\fre$ is a period-$n$ set, then it is its own $\fre_n$.

The name ``period-$n$ set'' comes from the fact that these sets are precisely the spectra of two-sided periodic Jacobi matrices. The original proofs we had for some of the results we just described used the theory of such matrices and we have kept some of the terminology.  While we will prove these results here using only the alternation theorem and some potential theory, we'll end the section with a brief indication of the approach that relies on the fact that ${2 T_n}/{\norm{T_n}_\fre}$ is the discriminant of a periodic Jacobi matrix.

We are not the first ones to note the special properties of polynomials, $P$, for which $P^{-1}([-A,A]) \subset \bbR$.  Their use is implicit in much of the work on the theory of periodic Schr\"{o}dinger operators and Jacobi matrices as we'll explain at the end of this section. In the orthogonal polynomial community, there is an initial work of Geronimo--Van Assche \cite{GvA} and important follow-up of Peherstorfer \cite{Per0, Per1, Per2, Per3} and Totik \cite{Totik00, Totik01, Totik09, Totik12}.

\begin{theorem} \lb{t2.1} Let $\fre$ be an infinite, compact subset of $\bbR$, $T_n$ its $n$th Chebyshev polynomial and let $\fre_n$ and $\stackrel{\circ}{\fre}_n$ be given respectively by \eqref{1.3} and \eqref{2.1}.  Then there exist $\alpha_1 < \beta_1 \le \alpha_2 < \dots \beta_j \le \alpha_{j+1} \dots < \beta_n$ so that
\begin{equation} \lb{2.6}
\stackrel{\circ}{\fre}_n = \bigcup_{j=1}^n (\alpha_j, \beta_j), \qquad \fre_n = \bigcup_{j=1}^n [\alpha_j, \beta_j]
\end{equation}
Moreover on $(\alpha_j, \beta_j)$, we have that $(-1)^{n-j} T_n'(x) > 0$, $\{\alpha_1,\beta_n\} \in \fre$ and for each $j = 1,\dots,n-1$, at least one of $\beta_j$ and $\alpha_{j+1}$ lie in $\fre$.
\end{theorem}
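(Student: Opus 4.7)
The plan is to exploit the interlacing of zeros and critical points of $T_n$ together with the alternation theorem, Theorem \ref{t1.1}. Throughout let $M := \norm{T_n}_\fre$.

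First, by fact (a), $T_n$ has $n$ simple real zeros $z_1 < \cdots < z_n$ lying in $\cvh(\fre)$, so Rolle's theorem supplies $n-1$ simple critical points $c_j$ with $z_j < c_j < z_{j+1}$. The crucial preliminary claim is that $|T_n(c_j)| \geq M$ for every $j \in \{1,\ldots,n-1\}$. I would apply Theorem \ref{t1.1} to produce an alternating set $y_0 < y_1 < \cdots < y_n$ in $\fre$; since $T_n$ changes sign between consecutive $y_k$'s and has only $n$ zeros, the two sequences interlace as $y_0 < z_1 < y_1 < z_2 < \cdots < z_n < y_n$. On each $[z_j, z_{j+1}]$ the polynomial $T_n$ is of constant sign with unique extremum at $c_j$, so $|T_n(c_j)| = \max_{[z_j, z_{j+1}]} |T_n| \geq |T_n(y_j)| = M$. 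I expect this step to be the main obstacle: once it is in hand, the rest is bookkeeping, but without it two adjacent components of $\fre_n$ could fuse and the component count would fall below $n$.

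Second, with this bound I would describe $\fre_n$ explicitly. On $(-\infty, z_1]$ and $[z_n, \infty)$, the absence of critical points combined with $|T_n(x)| \to \infty$ as $|x| \to \infty$ produces unique $\alpha_1 < z_1$ and $\beta_n > z_n$ with $|T_n| = M$, yielding $\fre_n \cap (-\infty, z_1] = [\alpha_1, z_1]$ and $\fre_n \cap [z_n, \infty) = [z_n, \beta_n]$. On each intermediate $[z_j, z_{j+1}]$, $|T_n|$ rises monotonically from $0$ at $z_j$ to the summit $|T_n(c_j)| \geq M$ at $c_j$ and falls back to $0$, so $\fre_n \cap [z_j, z_{j+1}]$ is either the whole interval (with tangency at $c_j$, in the case $|T_n(c_j)| = M$) or $[z_j, a_j] \cup [b_j, z_{j+1}]$ for unique $a_j < c_j < b_j$ with $|T_n(a_j)| = |T_n(b_j)| = M$ (in the case $|T_n(c_j)| > M$). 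Splicing these pieces across the shared points $z_j$ produces $\fre_n = \bigcup_{j=1}^n [\alpha_j, \beta_j]$ with $\beta_j \leq \alpha_{j+1}$ (equality $\beta_j = \alpha_{j+1} = c_j$ in the tangency case, strict inequality $\beta_j = a_j < b_j = \alpha_{j+1}$ otherwise); removing boundary points gives the open form \eqref{2.6}.

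Third, the remaining assertions follow quickly. Each $(\alpha_j, \beta_j)$ is contained in $(c_{j-1}, c_j)$ under the conventions $c_0 = -\infty$, $c_n = +\infty$, hence contains no zero of $T_n'$, so $T_n'$ is of constant sign on it; evaluating at the zero $z_j \in (\alpha_j, \beta_j)$ via $T_n'(z_j) = \prod_{i \neq j} (z_j - z_i)$ gives this sign as $(-1)^{n-j}$. For $\{\alpha_1, \beta_n\} \subset \fre$, fact (c) tells us $|T_n(\inf \fre)| = |T_n(\sup \fre)| = M$; since $|T_n|$ is strictly monotone on $[\alpha_1, z_1]$ and attains $M$ only at $\alpha_1$ there, and $\fre \subset \fre_n$ forces $\inf \fre \in [\alpha_1, z_1]$, we get $\inf \fre = \alpha_1 \in \fre$, with a symmetric argument at $\beta_n$. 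Finally, for each $j = 1, \ldots, n-1$, the alternating point $y_j \in \fre$ lies in $(z_j, z_{j+1})$ with $|T_n(y_j)| = M$; the description above identifies the only points in this interval where $|T_n| = M$ as $\beta_j$ and $\alpha_{j+1}$ (both equal to $c_j$ in the tangency case, equal to $a_j$ and $b_j$ otherwise), so $y_j \in \{\beta_j, \alpha_{j+1}\}$ and at least one of these is in $\fre$.
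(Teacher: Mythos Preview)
Your proof is correct. The approach is close in spirit to the paper's, since both rest on the alternation theorem and its consequences (facts (a)--(c) after Theorem~\ref{t1.1}), but the organizing principle differs slightly. The paper's proof is much terser: it invokes directly the observation, already recorded in fact (a), that for every $\gamma\in(-M,M)$ the equation $T_n(x)=\gamma$ has exactly $n$ simple real solutions in $\cvh(\fre)$; this immediately yields the $n$-band structure of $\stackrel{\circ}{\fre}_n$ and the monotonicity on each band, and the remaining assertions are then read off from the alternation theorem as a ``restatement.'' You instead establish the dual statement $|T_n(c_j)|\ge M$ at each critical point via the interlacing $y_0<z_1<y_1<\cdots<z_n<y_n$, and then build the bands explicitly from the monotone pieces between critical points. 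The two key facts are equivalent (a critical value below $M$ would produce a non-simple level set, and conversely), so what you flagged as ``the main obstacle'' is precisely what the paper's level-set observation encodes. Your version is more explicit and self-contained; the paper's is shorter because it leans on the earlier discussion.
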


\begin{proof} As we noted in the consequences of the alternation theorem, for any $\gamma \in (-\norm{T_n}_\fre,\norm{T_n}_\fre)$ all $n$ solutions of $T_n(x) = \gamma$ are simple and lie in $\mathrm{cvh}(\fre)$.  This implies the claimed structure for $\stackrel{\circ}{\fre}_n$ and $\fre_n$, \eqref{2.6} and the derivative condition.  The $\alpha$'s and $\beta$'s are all the solution of $T_n(x) = \pm \norm{T_n}_\fre$ so the remainder of the theorem is a restatement of the alternation theorem. \end{proof}

We will call $[\alpha_j,\beta_j] = \fre_n^{(j)}$, the \textit{$j$th band} of $\fre_n$.  Define
\begin{equation} \lb{2.7}
\Delta_n(z) \equiv \frac{2 T_n (z)}{\norm{T_n}_\fre}
\end{equation}
so that $\fre_n$ is exactly the set where $-2 \le \Delta_n(x) \le 2$ and $\Delta_n$ takes values in $\bbC \setminus [-2,2]$ on $\bbC \setminus \fre_n$.  The Joukowski map $z \mapsto z+z^{-1}$ takes $\bbD$ one-one to $\bbC \setminus [-2,2]$ and $\partial\bbD$ two-one to $[-2,2]$ so its functional inverse $z \mapsto \tfrac{z}{2} - \sqrt{\left(\tfrac{z}{2}\right)^2 - 1}$ maps $(\bbC \cup \{\infty \}) \setminus [-2,2]$ to $\bbD$.  The numerical inverse of this, $z \mapsto \tfrac{z}{2} + \sqrt{\left(\tfrac{z}{2}\right)^2 - 1}$, thus maps $(\bbC \cup \{\infty \}) \setminus [-2,2]$ to $(\bbC \cup \{\infty \})\setminus \overline{\bbD}$.  It follows that
\begin{equation}
\frac{\Delta_n(z)}{2} + \sqrt{\left(\frac{\Delta_n(z)}{2}\right)^2 - 1}
\end{equation}
maps $\Omega_n \equiv (\bbC \cup \{\infty \})\setminus \fre_n$ to $(\bbC \cup \{\infty \})\setminus \overline{\bbD}$.  If we take the log of the absolute value of this nonvanishing analytic function, we get a strictly positive harmonic function on $\bbC \setminus \fre_n$.  Since this function approaches 0 as one approaches $\fre_n$ and is $n \log|z| + \mathrm{O}(1)$ near $\infty$, we have proven the first assertion in:

\begin{theorem} \lb{t2.2} Let $\fre$ be an infinite compact subset of $\bbR$, $T_n$ its $n$th Chebyshev polynomial, $\Delta_n$ given by \eqref{2.7} and let $\fre_n$ be given by \eqref{1.3}. Then the Green's function, $G_n$, of $\fre_n$ is given by:
\begin{equation} \lb{2.8}
G_n(z) = \frac{1}{n} \log \left|\frac{\Delta_n(z)}{2} + \sqrt{\left(\frac{\Delta_n(z)}{2}\right)^2 - 1}\right|
\end{equation}
Moreover, we have that:
\begin{equation} \lb{2.9}
\begin{split}
 B_n(z)^n = \frac{\Delta_n(z)}{2} - \sqrt{\left(\frac{\Delta_n(z)}{2}\right)^2 - 1} \\
 B_n(z)^{-n} = \frac{\Delta_n(z)}{2} + \sqrt{\left(\frac{\Delta_n(z)}{2}\right)^2 - 1}
 \end{split}
\end{equation}
and \eqref{2.4} and \eqref{2.3} hold.
\end{theorem}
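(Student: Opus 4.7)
The formula \eqref{2.8} was established in the paragraph preceding the theorem; it remains to prove \eqref{2.9}, \eqref{2.4}, and \eqref{2.3}. The plan is to set
\[
F(z) := \frac{\Delta_n(z)}{2} - \sqrt{\left(\tfrac{\Delta_n(z)}{2}\right)^2 - 1}, \qquad G(z) := \frac{\Delta_n(z)}{2} + \sqrt{\left(\tfrac{\Delta_n(z)}{2}\right)^2 - 1},
\]
which, by the Joukowski discussion preceding the theorem, are single-valued analytic functions on $\Omega_n$ satisfying $FG \equiv 1$ and $|F| = \exp(-nG_n)$ via \eqref{2.8}. Expanding at $\infty$, one has $\Delta_n(z) = 2z^n/\norm{T_n}_\fre + O(z^{n-1})$, and since $w/2 - \sqrt{(w/2)^2 - 1} = 1/w + O(1/w^3)$ for large $|w|$, $F(z) = \norm{T_n}_\fre/(2z^n) + O(1/z^{n+1})$ -- a zero of order $n$ at $\infty$.

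Next, I compare $F$ with $B_n^n$ on the universal cover $\wti{\Omega}_n$ of $\Omega_n$. There $B_n$ is single-valued analytic with $|B_n| = \exp(-G_n)$, normalized so that $B_n(z) = C(\fre_n)/z + O(1/z^2)$ near (any preimage of) $\infty$; hence $B_n(z)^n = C(\fre_n)^n/z^n + O(1/z^{n+1})$. The quotient $H := F/B_n^n$ extends across the removable singularities at preimages of $\infty$ to an analytic function on all of $\wti{\Omega}_n$ with $|H| \equiv 1$. Since an analytic function of constant modulus on a connected open set must be constant, $F \equiv c\,B_n^n$ on $\wti{\Omega}_n$ for some $c$ with $|c|=1$.

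Matching leading coefficients at $\infty$ gives $c = \norm{T_n}_\fre/(2\,C(\fre_n)^n)$, which is a positive real of unit modulus, so $c = 1$. This is exactly \eqref{2.3}. Substituting back, $F \equiv B_n^n$ is the first identity in \eqref{2.9}; inverting, $G = 1/F = B_n^{-n}$ is the second; and adding, $\Delta_n = F + G = B_n^n + B_n^{-n}$ is \eqref{2.4}.

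The principal subtlety is that $B_n$ is a priori only character-automorphic on $\Omega_n$ (with some character $\chi_{\fre_n}$), while $F$ is single-valued on $\Omega_n$. The identity $F = B_n^n$ on $\wti{\Omega}_n$ therefore carries the additional information that the character $\chi_{\fre_n}^n$ is trivial -- equivalently, that $\fre_n$ is a period-$n$ set, as taken up in the rest of this section. Working from the outset on the universal cover bypasses the multivaluedness and lets the period-$n$ structure emerge as a corollary of the proof.
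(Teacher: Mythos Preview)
Your proof is correct and follows essentially the same route as the paper's: both identify the Joukowski expression with $B_n^{\pm n}$ by noting it is analytic on $\Omega_n$, has modulus $\exp(\mp nG_n)$ by \eqref{2.8}, and hence can differ from $B_n^{\pm n}$ only by a unimodular constant, which the asymptotics at $\infty$ pin down. The paper phrases this as ``it must be $B_n(z)^{-n}$'' and then reads off \eqref{2.3} from the leading coefficients, whereas you make the constant-modulus quotient argument on the universal cover explicit and extract \eqref{2.3} as the statement $c=1$; the substance is the same.
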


\begin{proof} We proved \eqref{2.8} above.  By that formula, the absolute value of the right-hand side of the second equation in \eqref{2.9} is $\exp(nG_\fre(z))$.  Since this expression is analytic on $\Omega_n \setminus \{\infty\}$ and is $C z^n + \mathrm{O}(z^{n-1})$ with $C > 0$ there, it must be $B_n(z)^{-n}$.  The first equation in \eqref{2.9} holds since both sides are inverses of the two sides of the second equation, which we have just proven.  Adding the two equations in \eqref{2.9} and using \eqref{2.7}, we get \eqref{2.4}.

By \eqref{1.10}, $B_n(z)^{-n} = z^n C(\fre_n)^{-n} +  \mathrm{O}(z^{n-1})$; we see that $\Delta_n(z) = z^n C(\fre_n)^{-n} +  \mathrm{O}(z^{n-1})$ also.  By \eqref{2.7} and the fact that $T_n$ is monic, we obtain \eqref{2.3}.  \end{proof}

Since $\fre \subset \fre_n$, we have that $C(\fre) \le C(\fre_n)$, so \eqref{2.3} immediately implies Schiefermayr's Theorem, \eqref{1.12}.

Next, we turn to the form of the equilibrium measure, $\rho_n$, for $\fre_n$.  We note that $\Delta_n$ runs monotonically from $-2$ to $+2$ or vice versa. We have that:

\begin{theorem} \lb{t2.3} In each band of $\fre_n$, define $\theta(x) \in [0, \pi]$ by
\begin{equation} \lb{2.10}
\Delta_n(x) =2 \cos(\theta(x))
\end{equation}
Then
\begin{equation} \lb{2.11}
d\rho_n (x) = (\pi n)^{-1} |\theta'(x)| dx
\end{equation}
In particular, each band has $\rho_n$-measure ${1}/{n}$.  If $\eta_j \in \fre_n^{(j)}$ is the zero of $T_n$ in $\fre_n^{(j)}$, then each of $[\alpha_j, \eta_j]$ and $[\eta_j, \beta_j]$ has $\rho_n$-measure ${1}/{2n}$.
\end{theorem}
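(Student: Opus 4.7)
The plan is to derive (2.11) from the classical potential-theoretic relation between the equilibrium measure and the normal derivative of the Green's function, using the explicit formula for $G_n$ from Theorem~\ref{t2.2}. Since $\fre_n$ is a finite union of closed intervals in $\bbR$ and $G_n$ is harmonic on $\bbC\setminus\fre_n$ with $G_n(\bar z)=G_n(z)$, on the interior of any band one has the jump formula
\[
d\rho_n(x) \;=\; \frac{1}{\pi}\,\frac{\partial G_n}{\partial y}(x+i0)\,dx,
\]
obtained from $\tfrac{1}{2\pi}\Delta G_n = d\rho_n$ (in the distributional sense) together with the symmetry under complex conjugation which makes the upper and lower normal derivatives equal in magnitude.

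Next I would compute $\partial_y G_n(x+i0)$ on a band using the second identity in (2.9). On $(\alpha_j,\beta_j)$ one has $\Delta_n(x)/2 = \cos\theta(x)$ and thus $(\Delta_n(x)/2)^2 - 1 = -\sin^2\theta(x)$. Choosing the branch of the square root that corresponds to $\Ima z \downarrow 0$ gives
\[
B_n^{-n}(x+i0) \;=\; \cos\theta(x) + i\sin\theta(x) \;=\; e^{i\theta(x)}.
\]
Writing $\log B_n^{-n}(z) = nG_n(z) + i\,v(z)$ with $v$ a local harmonic conjugate, we read off $v(x+i0) = \theta(x)\pmod{2\pi}$ (continuously in $x$, since $\theta$ is monotone on the band by Theorem~\ref{t2.1}). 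The Cauchy--Riemann equations give $\partial_y(nG_n) = -\partial_x v$, so
\[
\frac{\partial G_n}{\partial y}(x+i0) \;=\; -\frac{\theta'(x)}{n}.
\]
Since the left side is $\ge 0$ (as $G_n\ge 0$ vanishes on the band), taking absolute values yields $d\rho_n(x) = (\pi n)^{-1}|\theta'(x)|\,dx$, which is (2.11).

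The two quantitative consequences are then immediate. On each band, $\theta$ is a monotone bijection between $(\alpha_j,\beta_j)$ and either $(0,\pi)$ or $(\pi,0)$, so
\[
\rho_n\bigl(\fre_n^{(j)}\bigr) \;=\; \frac{1}{\pi n}\int_{\alpha_j}^{\beta_j}|\theta'(x)|\,dx \;=\; \frac{1}{\pi n}\cdot\pi \;=\; \frac{1}{n}.
\]
The zero $\eta_j$ of $T_n$ inside $\fre_n^{(j)}$ satisfies $\Delta_n(\eta_j)=0$, hence $\cos\theta(\eta_j)=0$, i.e.\ $\theta(\eta_j)=\pi/2$; so $|\theta|$ sweeps exactly half the interval $[0,\pi]$ on each of $[\alpha_j,\eta_j]$ and $[\eta_j,\beta_j]$, giving each subinterval $\rho_n$-measure $1/(2n)$.

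The main technical point is the jump formula in the first displayed equation; once that is in hand, the rest is bookkeeping with the Joukowski parametrization. I would either cite it from a standard potential-theoretic reference (e.g.\ Ransford) or, if preferred, justify it here by noting that $G_n$ extends to a continuous function on $\bbC$, is harmonic on $\bbC\setminus\fre_n$, and vanishes on $\fre_n$, so Green's identity applied to $G_n$ and $\log|z-w|$ against a small contour hugging the band reproduces the formula.
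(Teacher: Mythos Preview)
Your proof is correct and essentially the same as the paper's sketch: both recover $d\rho_n$ from the explicit formula \eqref{2.8}/\eqref{2.9} for $G_n$ via its relationship to the equilibrium measure. The paper phrases this as applying $2\partial$ to \eqref{2.8} to obtain the Stieltjes transform identity \eqref{2.12} and then reading off the density from boundary imaginary parts, whereas you compute the normal derivative $\partial_y G_n(x+i0)$ via Cauchy--Riemann from the boundary argument of $B_n^{-n}$; these are the same computation in slightly different notation.
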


We will not give a formal proof of this result. The final sentence is an immediate consequence of \eqref{2.11} given that $\Delta_n$ runs monotonically from $2$ to $-2$ or from $-2$ to $2$ on a band, so that $\theta$ runs monotonically from $0$ to $\pi$ or from $\pi$ to $0$.  \eqref{2.11} is well known in the mathematical physics literature obtained from the theory of discriminants.  For example, Simon \cite{SimonSzego} has two proofs of it -- one as Theorem 5.3.8 and one as Theorem 5.4.8.  A quick proof is to apply the operator $\partial$ of \eqref{1.15a} to \eqref{2.8}, using \eqref{1.15b} to get
\begin{equation} \lb{2.12}
\int \frac{d\rho_n(x)}{x-z} = \frac{1}{n} \frac{\Delta_n'(z)}{\sqrt{\Delta_n(z)^2 - 4}}
\end{equation}
Taking imaginary parts of both sides, one gets \eqref{2.11} by noting the boundedness of this imaginary part and computing its boundary value.  The square root on the right of \eqref{2.12} is pure imaginary on $\fre_n$.  One needs to track carefully its phase from the square root singularity which is compensated in the ratio by the change of the sign of $\Delta_n'$ from band to band.  This immediately implies a strong form of \eqref{2.5}.

\begin{theorem} \lb{t2.4} Let $K$ be a gap of $\fre$.  Then \eqref{2.5} holds.  If $T_n$ has no zero in $K$, then $1/n$ can be replaced by $1/2n$.  Moreover, $K \cap \fre_n$, if non-empty, is a single interval.
\end{theorem}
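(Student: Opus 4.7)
The plan is to derive everything from the band decomposition in Theorem~\ref{t2.1} together with the band-measure distribution in Theorem~\ref{t2.3}. Write $K=(a,b)$ so that $a,b\in\fre\subset\fre_n$ and $K\cap\fre=\emptyset$.

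First I would prove the ``single interval'' assertion, which simultaneously supplies the structural fact needed for the measure bound. Suppose, toward a contradiction, that $K$ meets two distinct bands $[\alpha_i,\beta_i]$ and $[\alpha_j,\beta_j]$ with $i<j$. Choose $x\in K\cap[\alpha_i,\beta_i]$ and $y\in K\cap[\alpha_j,\beta_j]$. The band ordering from Theorem~\ref{t2.1} gives
\begin{equation*}
a<x\le \beta_i\le \alpha_{i+1}\le y<b,
\end{equation*}
so both $\beta_i$ and $\alpha_{i+1}$ lie in the open gap $K$. By Theorem~\ref{t2.1}, at least one of these two points belongs to $\fre$, contradicting $K\cap\fre=\emptyset$. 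Hence $K$ meets at most one band $[\alpha_k,\beta_k]$, and $K\cap\fre_n=K\cap[\alpha_k,\beta_k]$ is an interval (the intersection of two intervals).

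Second, the bound $\rho_n(K)\le 1/n$ is now immediate. Indeed, if $K\cap\fre_n=\emptyset$ then $\rho_n(K)=0$; otherwise, denoting by $[\alpha_k,\beta_k]$ the unique band that $K$ meets,
\begin{equation*}
\rho_n(K)=\rho_n(K\cap[\alpha_k,\beta_k])\le \rho_n([\alpha_k,\beta_k])=\tfrac{1}{n},
\end{equation*}
where the last equality is the ``each band has $\rho_n$-measure $1/n$'' statement of Theorem~\ref{t2.3}.

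Third, for the improved bound under the hypothesis that $T_n$ has no zero in $K$: let $\eta_k\in[\alpha_k,\beta_k]$ be the (unique) zero of $T_n$ in the band that $K$ meets. Since $\eta_k\notin K=(a,b)$, either $\eta_k\le a$ or $\eta_k\ge b$. In the first case $K\cap[\alpha_k,\beta_k]\subset (a,\beta_k]\subset[\eta_k,\beta_k]$; in the second, $K\cap[\alpha_k,\beta_k]\subset[\alpha_k,b)\subset[\alpha_k,\eta_k]$. By the half-band measure statement of Theorem~\ref{t2.3}, each of $[\alpha_k,\eta_k]$ and $[\eta_k,\beta_k]$ carries $\rho_n$-mass $1/(2n)$, so $\rho_n(K)\le 1/(2n)$.

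There is no real obstacle: the only place one must be even slightly careful is in verifying that both $\beta_i$ and $\alpha_{i+1}$ actually lie in the \emph{open} gap $K$ (as opposed to merely in $[a,b]$) in the contradiction step; this is what forces the use of Theorem~\ref{t2.1}'s ``at least one of $\beta_j,\alpha_{j+1}\in\fre$'' clause. All remaining pieces are purely combinatorial consequences of the band ordering and of Theorem~\ref{t2.3}.
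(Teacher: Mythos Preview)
Your proof is correct and follows essentially the same approach as the paper's: both use Theorem~\ref{t2.1}'s clause that at least one of $\beta_i,\alpha_{i+1}$ lies in $\fre$ to show $K$ can meet at most one band, then invoke the band and half-band measures from Theorem~\ref{t2.3}. The paper phrases the first step directly (if band $j$ meets $K$, connectedness forces $K$ to miss all other bands) while you phrase it by contradiction, and you spell out the half-band containment more explicitly than the paper does, but the substance is identical.
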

\begin{remarks} 1. When $\fre$ is a finite gap set, it is an implicit result of Sodin-Yuditskii \cite {SY93} and explicit result of Peherstorfer \cite{Per0, Per2} that each gap contains no more than one band.

\smallskip

2. The interval mentioned in the last sentence may be closed (if the band is entirely in $K$), half open (if one end of the intersection is an end-point of $K$), or open (if the intersection is all of K).

\smallskip

3. From \eqref{2.12}, we deduce that $\rho_n$ is a.c. with respect to $dx$ and
\begin{equation} \lb{2.12a}
d\rho_n(x) = w_n(x) dx , \qquad w_n(x) = \frac{1}{\pi n}\frac{|\Delta_n'(x)|}{\sqrt{4-\Delta_n(x)^2}}
\end{equation}
for $x \in \fre_n$
\end{remarks}
\begin{proof} Suppose that $\fre_n^{(j)} \cap K \ne \emptyset$.  Since $K$ is connected and at least one of $\beta_j$ or $\alpha_{j+1}$ lies in $\fre$, we conclude that K is disjoint from all the $\fre_n^{(k)}$ for $k > j$.  Similarly, $K$ is disjoint from all the $\fre_n^{(k)}$ for $k < j$.  Thus $K$ contains at most one band, so \eqref{2.5} follows from Theorem \ref{t2.3}.  If $T_n$ has no zero in K, at most half a band lies in $K$ and we get the improved ${1}/{2n}$ result. Since $K \cap \fre_n$, if non-empty, is a single band, we get the single interval claim.
\end{proof}

 Theorem \ref{t2.3} has another immediate consequence:

\begin{theorem} \lb{t2.5} $\fre_n$ is a period-$n$ set.
\end{theorem}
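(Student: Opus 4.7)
The plan is to combine the band structure from Theorem~\ref{t2.1} with the equidistribution statement from Theorem~\ref{t2.3}. Concretely, by Theorem~\ref{t2.1} we have the decomposition
\[
\fre_n = \bigcup_{j=1}^n [\alpha_j,\beta_j], \qquad \alpha_1 < \beta_1 \le \alpha_2 < \beta_2 \le \cdots \le \alpha_n < \beta_n,
\]
so $\fre_n$ is a finite disjoint union of closed intervals obtained by amalgamating consecutive bands that touch at a common endpoint. Every connected component of $\fre_n$ is therefore of the form $[\alpha_{i+1},\beta_{i+k}]$ for some $0 \le i$ and $k \ge 1$ with $\beta_{i+\ell}=\alpha_{i+\ell+1}$ for $\ell=1,\dots,k-1$, and consists of exactly $k$ bands.

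First I would record that, by Theorem~\ref{t2.3}, each band $\fre_n^{(j)}=[\alpha_j,\beta_j]$ satisfies $\rho_n(\fre_n^{(j)})=1/n$. Summing over the $k$ bands that make up a given connected component $K$ gives $\rho_n(K)=k/n$, where $k$ is the number of bands comprising $K$. Since the total number of bands is $n$, we have $k\in\{1,\dots,n\}$. This is exactly the definition of a period-$n$ set, so $\fre_n$ is a period-$n$ set.

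There is essentially no obstacle here: the result is a direct bookkeeping consequence of Theorems~\ref{t2.1} and \ref{t2.3}. The only point worth articulating carefully is that, even when two consecutive bands merge into a single component (i.e.\ when $\beta_j=\alpha_{j+1}$), the equilibrium measure of the union is still additive over the two bands, because $\rho_n$ is absolutely continuous with density $w_n$ given in \eqref{2.12a} and the common endpoint has Lebesgue (hence $\rho_n$) measure zero. With this trivial observation, the harmonic measure of every component is an integer multiple of $1/n$, and the proof is complete.
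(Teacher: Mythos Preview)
Your proposal is correct and is precisely the argument the paper has in mind: the paper states Theorem~\ref{t2.5} as an immediate consequence of Theorem~\ref{t2.3} without giving further details, and you have simply written out the bookkeeping—grouping consecutive touching bands into connected components and summing their $\rho_n$-measures—that makes this immediate.
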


Our penultimate result in this section is a converse to this result.  We need two preliminaries:

\begin{theorem} \lb{t2.6} Suppose that $[\alpha,\beta]$ is a connected component of a compact set $\fre \subset \bbR$.  Then:
\begin{SL}
\item[{\rm{(a)}}] $G_\fre$ has an analytic continuation across $(\alpha,\beta)$, i.e., there is an analytic function in a neighborhood, $N$, of $(\alpha,\beta)$ whose real part agrees with $G_\fre$ on $\{z \in N \,|\, \mathrm{Im}(z) > 0\}$.  $G_\fre$ vanishes  everywhere on $(\alpha,\beta)$ and is continuous on $\bbC_\pm\cup(\alpha,\beta)$.

\item[{\rm{(b)}}] If $\partial$  is given by \eqref{1.15a}, then $h(z) \equiv \sqrt{(z-\alpha)(\beta-z)} \partial G_\fre $ has an analytic continuation across $[\alpha,\beta]$, i.e., there is an analytic function in a neighborhood, $N_1$, of $[\alpha,\beta]$ which agrees with $h$ on $\{z \in N_1 \,|\, \mathrm{Im}(z) > 0\}$.

\item[{\rm{(c)}}] We have that
\begin{equation} \lb{2.13}
d\rho_\fre \restriction [\alpha,\beta] = \frac{q(x)}{\sqrt{(x-\alpha)(\beta-x)}} dx
\end{equation}
where $q(x) > 0$ on $(\alpha,\beta)$ and continuous on $[\alpha,\beta]$.
\item[{\rm{(d)}}] Suppose that $\fre = \bigcup_{k=1}^{p}[a_k,b_k]$.  Let $\{c_k\}_{k=1}^{p-1}$ be the critical points of $G_\fre$ where $a_k<b_k<c_k<a_{k+1}<b_{k+1}$, $k=1,\dots,p-1$.  Then $d\rho_\fre(x) = w(x) dx$ where, for $x \in \fre$,
\begin{equation} \lb{2.13a}
w(x) = \frac{\frac{1}{\pi}\prod_{k=1}^{p-1}|x-c_k|} {\prod_{k=1}^{p}|(x-a_n)(x-b_k)|^{1/2}}
\end{equation}
\end{SL}
\end{theorem}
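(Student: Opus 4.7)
The plan is to prove (a)--(d) in order, with (b) as the main technical step and (c),(d) following from it by standard boundary-value arguments.

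For (a), the key point is that $[\alpha,\beta]\subset\fre$ is a full interval, so every $x_0\in(\alpha,\beta)$ is a regular point of $\fre$ (trivially by the Wiener criterion, since $\fre$ contains a full real-axis neighborhood of $x_0$). Hence $G_\fre$ extends continuously to $(\alpha,\beta)$ with boundary value $0$, which gives the last sentence of (a). A harmonic conjugate $H_\fre$ on a simply connected slice of $N\cap\{\mathrm{Im}(z)>0\}$ makes $F:=G_\fre+iH_\fre$ holomorphic with $\mathrm{Re}(F)=G_\fre$ vanishing on a real segment, and Schwarz reflection $F(\bar z)=-\overline{F(z)}$ extends $F$ analytically across $(\alpha,\beta)$.

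The heart of the proof is (b). Near the endpoint $\alpha$, uniformize via $\phi(\zeta)=\alpha+\zeta^2$ and set $\tilde G(\zeta):=G_\fre(\alpha+\zeta^2)$. On the upper $\zeta$-half-plane $\tilde G$ is harmonic with continuous boundary value $0$ on $\{\mathrm{Im}(\zeta)=0\}$ near $\zeta=0$ (since real $\zeta$ give $\zeta^2\in[0,\beta-\alpha)\subset\fre$), so Schwarz reflection in $\zeta$ yields a holomorphic $F_\alpha(\zeta)=\sum a_n\zeta^n$ with $a_n\in\bbR$ and $\tilde G=\mathrm{Im}(F_\alpha)$ on the upper half-plane. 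The extra ingredient, exploiting $\fre\subset\bbR$, is the symmetry $G_\fre(\bar z)=G_\fre(z)$: combined with the tautological $\tilde G(\zeta)=\tilde G(-\zeta)$, it forces $\mathrm{Im}(F_\alpha(\zeta)+F_\alpha(-\zeta))=0$ on the upper $\zeta$-half-plane, so the even part $F_\alpha(\zeta)+F_\alpha(-\zeta)$ is a real-valued holomorphic function, hence constant; thus (after absorbing $a_0$) $F_\alpha$ is odd. Consequently $F_\alpha'(\zeta)$ is even in $\zeta$, i.e., an analytic function of $\zeta^2=z-\alpha$. A direct computation using \eqref{1.15b} gives $\partial G_\fre(z)=F_\alpha'(\zeta)/(4i\zeta)$, and matching branches produces $\sqrt{(z-\alpha)(\beta-z)}=\zeta\sqrt{\beta-z}$; hence
\begin{equation*}
h(z)=\frac{\sqrt{\beta-z}\,F_\alpha'(\zeta)}{4i}
\end{equation*}
is manifestly analytic in $z$ near $\alpha$. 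A symmetric argument handles $\beta$, and in the interior of $(\alpha,\beta)$ analyticity of $h$ follows from (a) plus analyticity there of the chosen branch of $\sqrt{(z-\alpha)(\beta-z)}$.

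Part (c) then follows from the Plemelj--Sokhotski formula: applied to $\partial G_\fre(z)=\tfrac{1}{2}\int d\rho_\fre(w)/(z-w)$, the density satisfies $w(x)=-\tfrac{2}{\pi}\mathrm{Im}(\partial G_\fre(x+i0))$. By (b), $\partial G_\fre(x+i0)=h(x)/\sqrt{(x-\alpha)(\beta-x)}$, so $q(x):=-\tfrac{2}{\pi}\mathrm{Im}(h(x))$, continuous on $[\alpha,\beta]$ by analyticity of $h$, positive in the interior from $w>0$, and positive at the endpoints since $h(\alpha)=\sqrt{\beta-\alpha}\,a_1/(4i)$ with $a_1>0$ (forced by $G_\fre>0$ in the adjacent gap). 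For (d), apply (b) at each band and set $R(z):=\prod_k\sqrt{(z-a_k)(z-b_k)}$ (analytic on $\bbC\setminus\fre$ with $R(z)\sim z^p$ at $\infty$); then $\partial G_\fre(z)R(z)$ extends analytically across every $[a_k,b_k]$ and is hence entire; its $z^{p-1}/2$ growth at $\infty$ forces it to be a polynomial of degree $p-1$ vanishing at each critical point $c_k$ of $G_\fre$, since $\partial G_\fre(c_k)=0$ while $R(c_k)\ne 0$. Therefore $\partial G_\fre(z)=\prod_k(z-c_k)/(2R(z))$; tracking the phase of $R(x+i0)$ on each band ($\pi/2$ per branch point crossed from $z=+\infty$) and inserting into the formula from (c) yields \eqref{2.13a}.

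The main obstacle is the odd-parity argument in (b): one must combine Schwarz reflection on the uniformizing $\zeta$-chart with both the evenness $\tilde G(\zeta)=\tilde G(-\zeta)$ and the real-axis symmetry $G_\fre(\bar z)=G_\fre(z)$, and then invoke the identity theorem to conclude that the even-in-$\zeta$ function $F_\alpha(\zeta)+F_\alpha(-\zeta)$ with vanishing imaginary part is constant.
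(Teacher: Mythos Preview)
Your proof is correct and takes a genuinely different route from the paper's. The paper establishes (b) via Craig's exponential Herglotz trick: writing $F=2\partial G_\fre=\int d\rho_\fre(t)/(t-z)$ and then representing $\log F(z)=C+\int Q(x)\,dx/(x-z)$ with $Q(x)=\tfrac{1}{\pi}\arg F(x+i0)$, they read off $Q\equiv\tfrac12$ on $(\alpha,\beta)$ and $Q\in\{0,1\}$ just outside, and exponentiating produces the square-root factor. Part (d) then falls out of the same representation by computing $Q$ on every band and gap. Your argument instead handles (b) locally by the quadratic uniformization $z=\alpha+\zeta^2$, Schwarz reflection in $\zeta$, and the parity step forcing $F_\alpha$ odd; and you obtain (d) by a Liouville argument, showing $R(z)\,\partial G_\fre(z)$ is entire of growth $z^{p-1}/2$ with zeros at the $c_k$. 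Your approach is more elementary---only classical reflection and removable-singularity arguments, no Herglotz machinery---and the Liouville route to (d) is arguably cleaner than tracking $Q$ piecewise. The paper's approach, on the other hand, is more systematic and ties directly into the reflectionless-operator framework they cite, which is why they emphasize Craig's method. Two minor remarks: your justification that $q>0$ at the endpoints (via $a_1>0$) and in the interior (via $w>0$) is a bit terse---both follow from Hopf-type arguments in the $\zeta$-chart and in $\bbC_+$ respectively, and you might make that explicit; and both your argument and the paper's tacitly assume no other components of $\fre$ accumulate at $\alpha$ or $\beta$, which is harmless for all applications in the paper but worth noting.
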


\begin{remarks} 1. We note the compatibility of \eqref{2.12a} and \eqref{2.13a}.  For the leading coefficient of $\Delta_n'$ is $n$ times that of $\Delta_n$ canceling the ${1}/{n}$ yielding a formula like \eqref{2.13a} but with the product over all the zeros of $\Delta_n'$ in the numerator and over all band edges in the denominator.  At a closed gap, $4 -\Delta_n^2$ has a double zero and $\Delta_n'$ a single so they cancel and \eqref{2.13a} results in the special case where $\fre$ is a period-$n$ set.

\smallskip

2. $\rm{(d)}$ is, as we'll note, equivalent to a product formula for $2\partial G_\fre$.  This formula can be found, for example, as (5.4.88) in Simon \cite{SimonSzego}.

\smallskip

3. \eqref{2.15} below is not literally true but is a bit of poetry because $Q(x) = 1$ above $\rm{cvh}(\fre)$, so the integral in \eqref{2.15} diverges.  One can use the renormalized version of the Herglotz representation, only look at imaginary parts, or put in a cutoff.  For use in proving \rm{(b)}, the cutoff is no problem since the remainder is analytic in a neighborhood of $[\alpha,\beta]$.  For \rm{(d)}, if the upper cutoff is $R$ above $\rm{cvh}(\fre)$, we get a $\log(R-z)$ term which is $\log(R) + \mathrm{o}(1)$ so if we absorb $\log(R)$ into redefining $C$, we get a limit and the argument that we give then works.
\end{remarks}

\begin{proof}  These results are well known to experts on potential theory and/or spectral theory of Schr\"{o}dinger operators.  Especially relevant are ideas of Craig \cite{Craig} given that $G_\fre(x) = 0$ on $\fre$ implies that $\partial G_\fre$ is the Stieltjes transform of a measure reflectionless on $\fre$.  (We caution the reader that Craig's ``Green's function'' is not $G_\fre$ but $2 \partial G_\fre$.) So we'll only sketch the details.

Since $G_\fre$ is a positive harmonic function on the upper half plane, $\bbC_+$, there is a Herglotz function $f$ on $\bbC_+$ with $\mathrm{Im}(f) = G_\fre$, so we can write a Herglotz representation for it.  Since $G_\fre$ is locally bounded, the measure in this representation is absolutely continuous.  Moreover, since q.e. on $(\alpha,\beta)$, $\lim_{\epsilon \downarrow 0} G_\fre(x+i\epsilon) = 0$, this measure gives zero weight to $[\alpha,\beta]$ which implies $\rm{(a)}$.

By differentiating the formula for $G_\fre$ in terms the potential of $d\rho_\fre$, we see, by \eqref{1.15b}, that:
\begin{equation} \lb{2.14}
F(z) \equiv 2 \partial G_\fre(z) = \int \frac{d\rho_\fre(x)}{x-z}
\end{equation}
$\rm{(a)}$ implies that $F$ is analytic across $(\alpha,\beta)$ and ${\rm{Re}}(F(x)) = 0$ there.  Thus we can use ideas of Craig \cite{Craig} to write a Herglotz representation for $\log(F)$:
\begin{equation} \lb{2.15}
\log(F(z)) = C + \int \frac{Q(x) dx}{x-z}
\end{equation}
where $C$ is a real constant and $Q(x) = \tfrac{1}{\pi} \lim_{\epsilon \downarrow 0} \mathrm{Arg}(F(x+i \epsilon))$.  It follows that $Q(x) = {1}/{2}$ on $(\alpha,\beta)$ and is either identically 0 or identically 1 just below $\alpha$ and similarly just above $\beta$.  Exponentiating \eqref{2.15} implies $\rm{(b)}$ which easily leads to $\rm{(c)}$ with $q(x) \ge 0$.  To see that $q(x) > 0$, we note that the Herglotz representation \eqref{2.15} implies that ${\rm{Im}}(\log(F(z)) - \tfrac{\pi}{2}$ goes to zero as $(\alpha,\beta)$ is approached from the upper half plane.  By the strong reflection principle, $\log(F(z))$ has continuous boundary values, so in particular $F$ has no zeros on $(\alpha,\beta)$.  Since ${\rm{Re}}(F) = 0$ there, we see that ${\rm{Im}}(F)$ is non-vanishing there.

To get $\rm{(d)}$, we use the representation \eqref{2.15} (noting that $Q(x)$ is ${1}/{2}$ on each $(a_j,b_j)$, $-1$ on $(b_p, \infty)$ and on each $(b_j, c_j)$ and $1$ on $(-\infty, a_1)$ and each $(c_j,a_{j+1})$) to get a product formula for $F$.  This is equivalent to the formula for $w$ by the integral representation in \eqref{2.14} and the theory of boundary values of Stieltjes transforms.  (The constant $C$ in \eqref{2.15} is determined by the $-{1}/{z}$ asymptotics of $F$.)  \end{proof}

\begin{theorem} \lb{t2.7}
If $\fre \subset \bbC$ is compact and $\gamma$ is any rectifiable curve in $\bbC \setminus \fre$, then, $\Delta_\gamma(B_\fre)$, the change in phase of $B_\fre$ in going around $\gamma$, is given by
\begin{equation} \lb{2.16}
\Delta_\gamma(B_\fre) = \exp \left(-2\pi i \int N(\gamma, x) d\rho_\fre(x)\right)
\end{equation}
where $N(\gamma, x)$ is the winding number of $\gamma$ around x.  In particular, if $\gamma$ winds once around $\frg \subset \fre$ and around no other points of $\fre$, then the multiplicative change of phase of $B_\fre$ around $\gamma$ is $\exp(-2 \pi i \rho_\fre(\frg))$.
\end{theorem}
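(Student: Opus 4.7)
The function $G_\fre$ is harmonic and single-valued on $\bbC\setminus\fre$, while $B_\fre$ is defined only locally by $B_\fre=\exp(-(G_\fre+iH_\fre))$ for a local harmonic conjugate $H_\fre$ of $G_\fre$.  As one continues $B_\fre$ along the closed rectifiable curve $\gamma\subset\bbC\setminus\fre$, $G_\fre$ returns to its initial value (so $|B_\fre|$ does too), but $H_\fre$ picks up an additive real constant; hence $\Delta_\gamma(B_\fre)$ is a unimodular number.  The plan is to introduce the locally analytic function $f:=-(G_\fre+iH_\fre)$, so that $\log B_\fre=f$ locally, express $f'$ as a Cauchy transform of $d\rho_\fre$, and then evaluate $\int_\gamma f'(z)\,dz$ by Fubini and the winding-number formula.

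To compute $f'$, apply \eqref{1.15b} to $f$: since $f$ is analytic with $\mathrm{Re}(f)=-G_\fre$, one gets $f'=-2\,\partial G_\fre$.  From \eqref{1.6}--\eqref{1.8} one has
\begin{equation*}
G_\fre(z)=R(\fre)+\int\log|z-x|\,d\rho_\fre(x),
\end{equation*}
and Wirtinger calculus gives $\partial\log|z-x|=\tfrac{1}{2}(z-x)^{-1}$ for $z\neq x$.  Differentiating under the $\rho_\fre$-integral (valid on compact subsets of $\bbC\setminus\fre$, where the kernel is uniformly bounded) yields
\begin{equation*}
f'(z)=-\int\frac{d\rho_\fre(x)}{z-x}.
\end{equation*}

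Then
\begin{equation*}
\Delta_\gamma(\log B_\fre)=\int_\gamma f'(z)\,dz=-\int_\gamma\!\int\frac{d\rho_\fre(x)}{z-x}\,dz.
\end{equation*}
Because $\gamma$ is compact and stays a positive distance from the compact $\fre$, the integrand $(z-x)^{-1}$ is bounded on $\gamma\times\fre$, so Fubini permits swapping the integrations, and the elementary identity $\int_\gamma(z-x)^{-1}\,dz=2\pi i\,N(\gamma,x)$ for closed rectifiable $\gamma$ avoiding $x$ then gives $\Delta_\gamma(\log B_\fre)=-2\pi i\int N(\gamma,x)\,d\rho_\fre(x)$.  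Exponentiating produces \eqref{2.16}.  The last sentence of the theorem is immediate: if $\gamma$ winds once around $\frg$ and not at all around $\fre\setminus\frg$, then $N(\gamma,\cdot)$ is the characteristic function of $\frg$ on $\fre$, so $\int N(\gamma,x)\,d\rho_\fre(x)=\rho_\fre(\frg)$.

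No step is a serious obstacle: the two analytic interchanges (differentiation inside the $\rho_\fre$-integral to obtain $f'$, and the Fubini exchange between $\rho_\fre$ and the path integral) are routine consequences of $\dist(\gamma,\fre)>0$.  The only mildly delicate conceptual point is that $f$ is only locally analytic and $B_\fre$ is genuinely multivalued on $\bbC\setminus\fre$---but this is precisely what the theorem quantifies, and it is handled simply by comparing values before and after analytic continuation along $\gamma$.
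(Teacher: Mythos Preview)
Your proposal is correct and essentially identical to the paper's proof: both compute the logarithmic derivative $B_\fre'/B_\fre$ (your $f'$) by applying $2\partial$ to $\log|B_\fre|=-G_\fre$ and differentiating under the $\rho_\fre$-integral to obtain the Cauchy transform $\int\frac{d\rho_\fre(x)}{x-z}$, then integrate along $\gamma$, swap integrals via Fubini, and invoke the contour-integral formula for the winding number. The only difference is cosmetic---you introduce the local analytic function $f=-(G_\fre+iH_\fre)$ explicitly, while the paper works directly with $B_\fre'B_\fre^{-1}$.
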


\begin{proof} Applying $2 \partial$ to both sides of $\log(|B_\fre|) = - G_\fre$, using the formula \eqref{1.8} for $G_\fre$ in terms of $\rho_\fre$ and \eqref{1.15b}, we get that
\begin{equation} \lb{2.17}
B_\fre'(z) B_\fre^{-1}(z) = \int \frac{d\rho_\fre(x)}{x-z}
\end{equation}
where one needs an easy argument to justify interchanging the derivative and integral.  Multiplying by $(2 \pi i)^{-1}$ and doing the contour integral, one gets \eqref{2.16} after interchanging the integrals and using the formula for $N(\gamma,x)$ as a contour integral.
\end{proof}

\begin{theorem} \lb{t2.8} Let $\fre \subset \bbR$ be a period-$n$ set.  Then for $k = 1,2,\dots$, $\fre$ is the set where its Chebyshev polynomial, $T_{kn}$, takes its values in $[-\norm{T_{kn}}_\fre, \norm{T_{kn}}_\fre]$, i.e.,  $\fre_{kn} = \fre$.
\end{theorem}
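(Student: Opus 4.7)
It suffices to prove $\fre_n = \fre$: since $\rho_\fre(E_\ell) = m_\ell/n = km_\ell/(kn)$, a period-$n$ set is also a period-$(kn)$ set, and the general case follows by applying the argument below with $kn$ replacing $n$. My plan is to construct explicitly a monic polynomial $S_n$ of degree $n$ with $S_n^{-1}([-\norm{S_n}_\fre, \norm{S_n}_\fre]) = \fre$ together with an alternating set of $n+1$ points for $S_n$ in $\fre$; Theorem~\ref{t1.1} then forces $S_n = T_n$, yielding $\fre_n = \fre$.

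Let $E_1, \ldots, E_p$ be the connected components of $\fre$, with $\rho_\fre(E_\ell) = m_\ell/n$ and $\sum_\ell m_\ell = n$. By Theorem~\ref{t2.7}, the multiplicative phase change of $B_\fre^n$ around a loop enclosing only $E_\ell$ is $\exp(-2\pi i \, n \cdot m_\ell/n) = 1$, so $B_\fre^n$ is single-valued on $\Omega = (\bbC \cup \{\infty\})\setminus\fre$. Normalizing the branch of $B_\fre$ so that $B_\fre > 0$ on $(\max\fre, \infty)$, Schwarz reflection gives $B_\fre^n(\bar z) = \overline{B_\fre^n(z)}$ throughout $\Omega$; in particular $B_\fre^n$ is real on every gap of $\fre$. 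Set $\Delta(z) = B_\fre^n(z) + B_\fre^{-n}(z)$. On $\fre$, $|B_\fre^n| = 1$ gives $B_\fre^{-n} = \overline{B_\fre^n}$, so $\Delta$ has real boundary values in $[-2,2]$ that agree from above and below (both equal $2\,\mathrm{Re}\,B_\fre^n|_+$). Hence $\Delta$ extends across $\fre$ by Morera to an entire function, and \eqref{1.19} gives $B_\fre^{-n}(z) = z^n/C(\fre)^n + O(z^{n-1})$ at infinity, identifying $\Delta$ as a polynomial of degree $n$ with leading coefficient $1/C(\fre)^n$. Define $S_n = C(\fre)^n \Delta$, monic of degree $n$. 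The elementary fact that $w + w^{-1} \in [-2,2]$ iff $|w|=1$, applied to $w = B_\fre^n$ using $|B_\fre^n| = e^{-nG_\fre} < 1$ off $\fre$, gives $S_n^{-1}([-2C(\fre)^n, 2C(\fre)^n]) = \fre$ and $\norm{S_n}_\fre = 2C(\fre)^n$.

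The main obstacle is producing the alternating set of size $n+1$ in $\fre$ for $S_n$. On each $E_\ell$, the phase change of $B_\fre^n$ as $x$ traverses $E_\ell$ on the upper side has magnitude $\pi m_\ell$: this is half of the full $2\pi m_\ell$ phase change on a loop enclosing $E_\ell$, the two halves being equal by the Schwarz symmetry just established. At each endpoint of $E_\ell$, $B_\fre^n$ is continuous (period-$n$ sets are regular) and equals its real-valued limit from the adjacent gap or unbounded component, so its argument is an integer multiple of $\pi$ there. Writing $B_\fre^n(x) = e^{i\phi(x)}$ on $E_\ell$ with $\phi$ monotone, $S_n(x) = 2C(\fre)^n \cos\phi(x)$ attains $\pm 2C(\fre)^n$ at exactly $m_\ell + 1$ points in $E_\ell$, with alternating signs within $E_\ell$. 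Summing over $\ell$ produces $n+p$ such extremum points in $\fre$; across the $p-1$ gaps between components the alternation may fail at a transition, and each failure drops at most one point from a maximal alternating subset. Hence at least $(n+p)-(p-1) = n+1$ alternating points remain, and Theorem~\ref{t1.1} identifies $S_n$ as $T_n$. This gives $\fre_n = S_n^{-1}([-\norm{S_n}_\fre, \norm{S_n}_\fre]) = \fre$, completing the proof.
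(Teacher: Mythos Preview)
Your construction of $S_n = C(\fre)^n(B_\fre^n + B_\fre^{-n})$ as an entire monic polynomial of degree $n$ with $S_n^{-1}([-\|S_n\|_\fre,\|S_n\|_\fre]) = \fre$ is exactly the paper's argument. The difference is in the identification step: the paper observes $\|S_n\|_\fre \le 2C(\fre)^n$ and invokes Schiefermayr's bound \eqref{1.12} (already established via \eqref{2.3}) together with uniqueness of the minimizer to conclude $S_n = T_n$ in one line, whereas you bypass Schiefermayr by exhibiting an alternating set of size $n+1$ directly from the phase of $B_\fre^n$ along each component. Your route is more self-contained in that it does not rely on the chain \eqref{2.3}$\Rightarrow$\eqref{1.12}, at the cost of some additional bookkeeping.

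One point you assert without justification is the monotonicity of $\phi$ on each $E_\ell$. This is needed for the count of exactly $m_\ell+1$ alternating extrema to go through; without it the phase could backtrack and produce repeated signs. The monotonicity does hold: $\phi'(x)$ is, up to a factor of $\pi n$, the density of $\rho_\fre$ on $E_\ell$, which Theorem~\ref{t2.6}(c) shows is strictly positive on the interior of each component. With that inserted, your counting ($n+p$ extrema, at most $p-1$ sign repetitions across gaps, hence an alternating subsequence of length $\ge n+1$) is correct.
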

\begin{remark} It is easy to see that if $S_n$ is the Chebyshev polynomial for $[-1,1]$ (which is the classical Chebyshev polynomial of the first kind up to a constant), then for the $\fre$'s of this theorem, one has that $T_{kn} = \norm{T_n}_\fre^k S_k(T_n/\norm{T_n}_\fre)$.
\end{remark}
\begin{proof} By Theorem \ref{t2.7}, the argument of $B_\fre^n$ changes by an integral multiple of $2\pi$ as one goes around any connected component of $\fre$ so it defines a function analytic in $\bbC \setminus \fre$.  Since this function is real on $\bbR$ near $+\infty$, we have on $\bbC \setminus \fre$ that $B_\fre^n(\bar{z}) = \ol{B_\fre^n(z)}$.  Moreover, by Theorem \ref{t2.6}, this function is continuous as $\fre$ is approached from one or the other side of $\fre$ and has magnitude $1$ there.  This shows that
\begin{equation} \lb{2.18}
P_n (z) \equiv C(\fre)^n \left(B_\fre^n(z) + B_\fre^{-n}(z)\right)
\end{equation}
is continuous across the interior of $\fre$ and so analytic there.  The end points of the intervals are thus removable singularities since $P_n$ is bounded there by Theorem \ref{t2.6}.  It follows that $P_n$ is an entire function and, by the asymptotics, \eqref{1.19}, of $B_\fre$, it is a monic polynomial of degree n.

Since $|B_\fre| = 1$ on $\fre$, we have that $\norm{P_n}_\fre \le 2 C(\fre)^n$, so by Schiefermayr's inequality, \eqref{1.12}, and uniqueness of the minimizer, $P_n$ is $T_n$.  Since $|B_\fre| < 1$ on $\bbC \setminus \fre$, we see that $\fre_n = \fre$ as claimed.  This proves the $k=1$ part of the Theorem.  But any period-$n$ set is also a period-$kn$ set.  \end{proof}

Our final result in this section proves a minimality property of $\fre_n$.

\begin{theorem} \lb{t2.9} Let $\fre \subset \bbR$.  Then for any period-$n$ set, $\frg \supset \fre$, we have that
\begin{equation} \lb{2.19}
C(\fre_n) \le C(\frg)
\end{equation}
with equality if and only if $\frg = \fre_n$.
\end{theorem}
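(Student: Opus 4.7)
The plan is to chain the Chebyshev minimization inequality with the norm identity \eqref{2.3} applied on both sets and the self-reproducing property of period-$n$ sets from Theorem \ref{t2.8}. Write $T_n^{\frg}$ for the $n$th Chebyshev polynomial of $\frg$. Since $\frg$ is a period-$n$ set, Theorem \ref{t2.8} (with $k=1$) yields $\frg_n = \frg$, and then \eqref{2.3} applied to $\frg$ gives $\norm{T_n^{\frg}}_\frg = 2 C(\frg)^n$.

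Next I would exploit the containment $\fre \subset \frg$ together with the defining minimization property of $T_n$ among monic degree-$n$ polynomials on $\fre$ to write
\begin{equation*}
2 C(\fre_n)^n \,=\, \norm{T_n}_\fre \,\le\, \norm{T_n^{\frg}}_\fre \,\le\, \norm{T_n^{\frg}}_\frg \,=\, 2 C(\frg)^n,
\end{equation*}
where the opening equality is \eqref{2.3} applied to $\fre$, the first inequality uses that $T_n^{\frg}$ is a competitor for the Chebyshev problem on $\fre$, and the second is immediate from $\fre \subset \frg$. Taking $n$th roots gives \eqref{2.19}.

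For the equality clause, if $C(\fre_n) = C(\frg)$, then the two inner inequalities above collapse to equalities. The equality $\norm{T_n}_\fre = \norm{T_n^{\frg}}_\fre$ says that the monic polynomial $T_n^{\frg}$ attains the Chebyshev minimum on $\fre$, so uniqueness of the Chebyshev polynomial (a consequence of the alternation theorem noted after Theorem~\ref{t1.1}) forces $T_n = T_n^{\frg}$. The equality $\norm{T_n^{\frg}}_\fre = \norm{T_n^{\frg}}_\frg$ then promotes to $\norm{T_n}_\fre = \norm{T_n^{\frg}}_\frg$. Using the definition \eqref{1.3} together with Theorem \ref{t2.8},
\begin{equation*}
\fre_n \,=\, T_n^{-1}\bigl([-\norm{T_n}_\fre,\norm{T_n}_\fre]\bigr) \,=\, (T_n^{\frg})^{-1}\bigl([-\norm{T_n^{\frg}}_\frg,\norm{T_n^{\frg}}_\frg]\bigr) \,=\, \frg_n \,=\, \frg.
\end{equation*}
The reverse direction is trivial since $\frg = \fre_n$ forces $C(\frg) = C(\fre_n)$.

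I do not anticipate a genuine obstacle: every ingredient, namely the Chebyshev minimization property, the norm formula \eqref{2.3}, uniqueness of the Chebyshev polynomial, and the identification $\frg_n=\frg$ for period-$n$ sets, is already in place earlier in this section. The only care required is bookkeeping in the equality case to chain $T_n=T_n^{\frg}$ with the matching of the norms in order to identify the two level sets.
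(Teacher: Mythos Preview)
Your proof is correct and follows essentially the same route as the paper: both chain the norm identity \eqref{2.3} with the Chebyshev minimality inequality and the inclusion $\fre\subset\frg$, and both settle the equality case via uniqueness of the Chebyshev polynomial together with Theorem~\ref{t2.8}. Your treatment of the equality clause is in fact a bit more explicit than the paper's, spelling out why matching the two norms forces the two preimage sets to coincide.
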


\begin{proof} Let $H_n$ be the $n$th Chebyshev polynomial for $\frg$.  Since $H_n$ is monic, we have that
\begin{equation} \lb{2.20}
2 C(\fre_n) = \norm{T_n}_\fre \le \norm{H_n}_\fre \le \norm{H_n}_\frg = 2 C(\frg)
\end{equation}
proving \eqref{2.19}.  If one has equality in \eqref{2.19}, then one has equality in the first inequality in \eqref{2.20}, so $T_n = H_n$ which implies, by Theorem \ref{t2.8} and the definition of $\fre_n$, that $\frg = \fre_n$.  \end{proof}

The results of this section can be understood from a spectral theory point of view. We end this section with a description of this connection to periodic Jacobi matrices -- one place to find the details of the theory of such matrices is Chapter~5 of Simon \cite{SimonSzego}. We consider two-sided sequences $\{a_j, b_j\}^\infty_{j=-\infty}$  with $a_j > 0, \, b_j \in \bbR$ and so that for some $p > 0$ and all $j$ in $\bbZ$
\begin{equation} \lb{2.21}
a_{j+p} = a_j, \qquad b_{j+p}=b_j
\end{equation}
We define doubly infinite tridiagonal matrices, $J$, with $b_j$ along the diagonal and $a_j$ on the principle subdiagonals (so that row $k$ has non-zero elements $a_{k-1} \; b_k \; a_k$ with $b_k$ in column $k$).

For $z \in \bbC$ fixed, we are interested in solutions, $\{u_j\}^\infty_{j=-\infty}$, of
\begin{equation} \lb{2.22}
a_j u_{j+1} + b_j u_j + a_{j-1} u_{j-1} = z u_j
\end{equation}
We study the $p$-step transfer (aka update) matrix:
\begin{equation} \lb{2.23}
M_p(z)\begin{pmatrix} u_1\\a_0u_0 \end{pmatrix} = \begin{pmatrix} u_{p+1}\\a_pu_p \end{pmatrix}
\end{equation}
We put $a$'s in the bottom component so that the one step matrix $ \frac{1}{a_j}\Bigl(\begin{smallmatrix} z-b_j & -1 \\a_j^2 & 0 \end{smallmatrix}\Bigr) $ has determinant $1$ and thus $\det(M_p(z)) = 1$.

In terms of the first and second kind orthogonal polynomials for Jacobi parameters $\{a_n, b_n\}^\infty_{n=1}$, as defined in Section 3.2 of \cite{SimonSzego},
\begin{equation} \lb{2.24}
M_p(z) = \begin{pmatrix} p_p(z) & -q_p(z)\\
a_pp_{p-1}(z) & - a_pq_{p-1}(z)\end{pmatrix}
\end{equation}

The \textit{discriminant}, $\Delta(z)$, defined by
\begin{equation} \lb{2.25}
\Delta(z) = \tr\bigl(M_p(z)\bigr) = p_p(z)-a_pq_{p-1}(z)
\end{equation}
is a (real) polynomial of degree exactly $p$. Given the recursion relations for $p_j(z)$ or the form of the one step transfer matrix, we see that $\Delta(z)$ is a polynomial of degree $p$ with leading coefficient $(a_1 \cdots a_p)^{-1}$.

If $M_p(z)$ has an eigenvalue $\lambda$, it is easy to see the difference equation has a (Floquet) solution obeying
$u_{j+mp} = \lambda^m u_j$ for all $m \in \bbZ$.  Since $\det(M_p(z)) = 1$, if $\lambda \ne \pm 1$, we get two linearly independent solutions, so if $|\lambda| \ne 1$, all solutions are exponentially growing at $\infty$ and/or at $-\infty$.  On the other hand, if $|\lambda| = 1$, there is a bounded solution.  Note that $M_p(z)$ has an eigenvalue with $|\lambda| = 1$ if and only if $\Delta(z) \in [-2,2]$.

Since it is known that the spectrum of $J$ is the closure of the set of $z$'s for which there are polynomially bounded solutions (Schnol's Theorem), we conclude that spec$(J)= \Delta^{-1}([-2,2])$. Since $J$ is self-adjoint, we have that $\Delta^{-1}([-2,2]) \subset \bbR$.

If $f(z)$ is an entire function real on the real axis and $f^\prime(x_0) = 0$ for $x_0 \in \bbR$, because of the local structure of analytic functions, there will be non-real $z$'s near $x_0$ with $f(z)$ a real value near $f(x_0)$.  Thus $\Delta^{-1}([-2,2]) \subset \bbR$ implies that
$$\Delta(x) \in (-2,2) \, \Rightarrow \, \Delta^\prime(x) \ne 0$$
Therefore, between successive points where $\Delta(x_0) = \pm 2$ and where $\Delta(x_1) = \mp 2$, $\Delta(x)$ is strictly monotone and $\Delta$ is a bijection.  It follows that $\Delta$ has an alternating set in $\fre =\Delta^{-1}([-2,2])$. Therefore, $a_1 \cdots a_p \Delta$ is the Chebyshev polynomial for $\fre = \fre_p$. It is known that $J$ has purely absolutely continuous spectrum of multiplicity 2, which implies that the half line operator is regular in the sense of Stahl--Totik \cite{ST} so one has that $(a_1 \cdots a_p)^{1/p} = C(\fre)$ and the density of zeros is $d\rho_\fre$. In the spectral theory literature, the density of zeros is called the density of states and it is well known that each ``band'' of the spectrum has density $1/n$.

From a spectral theory point of view, the fact that every period-$n$ set has a discriminant follows from the fact that such a set is the spectrum of a periodic Jacobi matrix.  Indeed, if $\ell$ is the number of gaps of a period-$n$ set, then one constructs an $\ell$ dimensional torus of such periodic matrices. For any finite gap set there is an isospectral torus which can be constructed as reflectionless Jacobi matrices (see Remling \cite{Rem}), or as minimal Herglotz functions (see our paper \cite{CSZ1}) or using Hardy spaces of character automorphic functions (see Sodin--Yuditskii \cite{SY}).  The elements of the isospectral torus are almost periodic with frequencies generated by the harmonic measures of the components of the finite gap set and so periodic with period $n$ if all these measures are of the form $k/n$.

%\fi

%%%%%%%%%%%%%%%%%%%%%%%%%%%%%%%%%%%%%%%%%%%%%%%%%%%%%%%%%%%%%%
\section{Root Asymptotics} \lb{s3}
%%%%%%%%%%%%%%%%%%%%%%%%%%%%%%%%%%%%%%%%%%%%%%%%%%%%%%%%%%%%%%

In this section, before turning to our two main theorems in the final sections, we make a comment on the FSS theorem and a remark on a Theorem of Saff--Totik concerning root asymptotics of the Chebyshev polynomials of any arbitrary infinite, compact set $\fre \subset \bbC$.  The first concerns the following theorem of Totik:

\begin{theorem} \lb{t3.1} (Totik \cite{Totik00}) Given any infinite, compact subset $\fre \subset \bbR$, there exist period-$n$ sets $\frg_n \supset \fre$ so that
\begin{equation} \lb{3.1}
\lim_{n \to \infty} C(\frg_n) = C(\fre)
\end{equation}
\end{theorem}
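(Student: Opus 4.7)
The plan is to simply take $\frg_n = \fre_n$, the level set defined in \eqref{1.3}. Essentially every piece we need has already been assembled in Section \ref{s2}, so the task is just to verify that this natural candidate does the job.

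First I would record that $\fre_n$ satisfies the two structural hypotheses. Inclusion $\fre \subset \fre_n$ is \eqref{2.2}, which is immediate from the definition of $\fre_n$ together with the definition of the Chebyshev polynomial. The period-$n$ property is exactly Theorem \ref{t2.5}, which was derived from Theorem \ref{t2.3} via the observation that each band has harmonic measure $1/n$.

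The remaining task is the convergence of capacities. Here the key identity is \eqref{2.3}, namely
\begin{equation*}
C(\fre_n) = \left(\tfrac{1}{2}\norm{T_n}_\fre\right)^{1/n}.
\end{equation*}
Combining this with the FFS Theorem (Theorem \ref{t1.2}) gives
\begin{equation*}
\lim_{n \to \infty} C(\fre_n) = \lim_{n\to\infty} 2^{-1/n}\norm{T_n}_\fre^{1/n} = C(\fre),
\end{equation*}
as required. Monotonicity of capacity under inclusion, together with $\fre \subset \fre_n$, guarantees also $C(\fre_n) \ge C(\fre)$, so the convergence is one-sided.

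There is no real obstacle here: the hard work has already been done in Section \ref{s2} in identifying $\fre_n$ as a period-$n$ set and in proving \eqref{2.3}. The only potential subtlety is the polar case $C(\fre) = 0$; the FFS statement requires non-polar $\fre$, but the inequality $\norm{T_n}_\fre \le (\diam \fre)^n$ together with the Fekete--Szeg\H{o} upper bound $\limsup \norm{T_n}_\fre^{1/n} \le C(\fre)$ (which holds in general) forces $C(\fre_n) \to 0$ in that degenerate case as well, so the statement holds uniformly.
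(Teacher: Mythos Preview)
Your proposal is correct and matches the paper's own argument essentially line for line: the paper takes $\frg_n=\fre_n$, invokes \eqref{2.2} and Theorem~\ref{t2.5} for the structural requirements, and combines \eqref{2.3} with the FFS Theorem (and $2^{1/n}\to 1$) to get \eqref{3.1}. Your additional paragraph on the polar case is a harmless extra; the paper simply works under the standing non-polar hypothesis of Theorem~\ref{t1.2}.
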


\begin{remarks} 1. This is useful because one can use polynomial mappings to extend some results from $[-1,1]$ to period-$n$ sets and then this theorem to extend the result to general sets in $\bbR$.  Polynomial inequalities have been obtained by Totik using this method (see his review article \cite{Totik12}) and, using this method, Lubinsky's approach \cite{Lub} to universality for the CD kernel has been extended from $[-1,1]$ to general compact sets in $\bbR$ by Simon \cite{SimonLub} and Totik \cite{Totik08}.

\smallskip

2. A stronger result is known for finite gap sets -- namely, if $\fre$ has $\ell$ gaps, then for $n \ge \ell$, $\frg_n$ can be picked to also have exactly $\ell$ gaps.  Indeed, this is how Totik proved Theorem \ref{t3.1}.  This stronger result has been discovered and proven independently by several different authors \cite{Rob, McKvM, Bog, Per2, Totik00}.
\end{remarks}

Our point here is to note the following:

\medskip

\centerline{\textit{Theorem \ref{t3.1} is equivalent to the FSS Theorem}}

\medskip

Totik informed us that he knew this but it seems not to be in the literature.  To see that FSS $\Rightarrow$ Theorem \ref{t3.1}, note that one can take $\frg_n = \fre_n$ and use \eqref{2.3} and \eqref{1.13} plus $2^{1/n} \to 1$ to get \eqref{3.1}.  Conversely, given Theorem \ref{t3.1} and \eqref{2.19}, we see that \eqref{3.1} holds for $\frg_n = \fre_n$.  Then \eqref{2.3} and $2^{1/n} \to 1$ implies \eqref{1.13}.

\smallskip

Our other result on root asymptotics is:

\begin{theorem} \lb{t3.2} For any compact set, $\fre \subset \bbC$,
\begin{equation}
|T_n(z)|^{1/n} \to C(\fre) \exp(G_\fre(z)) = \exp(-\Phi_\fre(z))
\end{equation}
uniformly on compact subsets of $\bbC \setminus \mathrm{cvh}(\fre)$.
\end{theorem}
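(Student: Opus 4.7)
The plan is a two-sided estimate: the upper bound comes straight from the Bernstein--Walsh lemma combined with the FFS theorem, and the lower bound requires a subsequence argument that identifies the limit of $\tfrac{1}{n}\log|T_n|$ through a minimum-principle argument on the Riemann sphere rather than by identifying the limiting zero-counting measure. First, I would apply Bernstein--Walsh to get $|T_n(z)| \le \|T_n\|_\fre \exp(n G_\fre(z))$ globally on $\bbC$; combined with Theorem~\ref{t1.2} this yields
\[
\limsup_{n \to \infty} |T_n(z)|^{1/n} \le C(\fre) \exp(G_\fre(z)) = \exp(-\Phi_\fre(z))
\]
everywhere in $\bbC$, without any restriction to the complement of $\cvh(\fre)$.

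The matching lower bound uses the classical fact that every zero of $T_n$ lies in $\cvh(\fre)$, proved in this paper for $\fre \subset \bbR$ and extending to compact $\fre \subset \bbC$ by a standard reflection argument (reflecting any exterior zero $z_0$ across a line strictly separating $z_0$ from $\cvh(\fre)$ produces a monic polynomial strictly smaller than $T_n$ on $\fre$). Writing $u_n(z) := \tfrac{1}{n} \log|T_n(z)| = \int \log|z-w| \, d\nu_n(w)$ for the normalized zero-counting measure $\nu_n$, I would invoke weak-$*$ compactness on probability measures supported in $\cvh(\fre)$ to extract a subsequence $\nu_{n_k} \to \nu$. Since $w \mapsto \log|z-w|$ is continuous on $\cvh(\fre)$ for each fixed $z \in \bbC \setminus \cvh(\fre)$, this gives $u_{n_k}(z) \to \int \log|z-w| \, d\nu(w)$.

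The main step is then to show that this limit equals $-\Phi_\fre(z)$ without proving $\nu = \rho_\fre$. Set $U(z) := -\Phi_\fre(z) - \int \log|z-w| \, d\nu(w)$. The upper bound gives $U \ge 0$ on $\bbC \setminus \cvh(\fre)$. Since $\nu$ and $\rho_\fre$ are both probability measures supported in $\cvh(\fre)$, their log-potentials each agree with $\log|z|$ modulo $O(1/|z|)$ at $\infty$, so $U = O(1/|z|)$ at $\infty$. Thus $U$ extends to a harmonic function on the connected open set $(\bbC \cup \{\infty\}) \setminus \cvh(\fre)$ with $U(\infty) = 0$; the minimum principle (the non-negative $U$ attains its minimum at the interior point $\infty$) forces $U \equiv 0$. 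Every subsequential limit of $u_n(z)$ therefore equals $-\Phi_\fre(z)$, giving pointwise convergence. To upgrade this to uniform convergence on compact subsets, I would observe that $\{u_n\}$ is a locally uniformly bounded sequence of harmonic functions on $\bbC \setminus \cvh(\fre)$ (above by Bernstein--Walsh, below by $u_n(z) \ge \log \dist(z, \cvh(\fre))$) and apply Harnack's convergence principle; exponentiating yields the claim. The main obstacle is conceptual: one must resist trying to prove $\nu_n \to \rho_\fre$ directly (which can fail when $\fre$ is a proper subset of $\cvh(\fre)$), and instead exploit the minimum principle on the Riemann sphere, where $\infty$ becomes an interior point of the harmonicity domain.
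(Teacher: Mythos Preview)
Your argument is correct and rests on the same mechanism as the paper's: Bernstein--Walsh plus FFS for the upper bound, Fej\'er's theorem to place all zeros in $\cvh(\fre)$, and then the observation that a non-negative harmonic function on $(\bbC\cup\{\infty\})\setminus\cvh(\fre)$ vanishing at $\infty$ must vanish identically. The paper, however, executes this without any subsequence machinery: it sets
\[
h_n(z) = \log Y(n) - \Phi_\fre(z) - \tfrac{1}{n}\log|T_n(z)|, \qquad Y(n) = \norm{T_n}_\fre^{1/n}/C(\fre),
\]
notes that each $h_n$ is already non-negative and harmonic on $(\bbC\cup\{\infty\})\setminus\cvh(\fre)$ with $h_n(\infty)=\log Y(n)\to 0$, and applies Harnack's inequality directly to get $h_n\to 0$ uniformly on compacta. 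Your detour through weak-$*$ limits of zero-counting measures and the minimum principle on the limiting potential $U$ is valid but superfluous: since Harnack bounds each $h_n$ on any compact set by a constant times its value at the single interior point $\infty$, no compactness, no subsequence extraction, and no separate upgrade from pointwise to uniform convergence is needed.
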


\begin{remarks} 1. This theorem is not new.  It appears in Saff--Totik \cite{SaT} as Theorem 3.9 in the more general context of weighted Chebyshev polynomials.

\smallskip

2.  Our proof is different.  They first control the density of zeros and use that to prove this result; shortly, we'll go in the other direction.
\end{remarks}

\begin{proof} Recall the Bernstein--Walsh Lemma, which says that, for any polynomial, $P$, of degree $n$, and any compact set $\fre \subset \bbC$, and so for $T_n$, one has that for all $z \in \bbC$
\begin{equation} \lb{3.2}
|T_n(z)| \le \norm{T_n}_\fre \exp(n G_\fre(z))
\end{equation}
Taking $n$th roots and using $C(\fre) = \exp(-R(\fre))$, we get that
\begin{equation} \lb{3.3}
|T_n(z)|^{1/n} \le Y(n) \exp(-\Phi_\fre(z)), \qquad Y(n) \equiv \norm{T_n}_\fre^{1/n}/C(\fre)
\end{equation}

Fej\'{e}r's theorem \cite{Fej} says that all the zeros of $T_n$ lie in $\mathrm{cvh}(\fre)$ so on $\widetilde{\Omega} \equiv (\bbC \cup \{\infty\}) \setminus \mathrm{cvh}(\fre)$, we have that
\begin{equation} \lb{3.4}
h_n(z) \equiv \log(Y(n)) - \Phi_\fre(z) - \tfrac{1}{n} \log(|T_n(z)|)
\end{equation}
are non-negative harmonic functions with $h_n(\infty) =  \log(Y(n))$.  By the FFS Theorem, $Y(n) \to 1$, so by Harnack's inequality, $h_n$ goes to zero, uniformly on compact subsets of $\widetilde{\Omega}$.
\end{proof}

Standard methods of going from root asymptotics to control on the density of zeros (see, for example \cite{ST, EqMC}) imply the following result (which is a special case of Theorem 4.7 of \cite{SaT}):

\begin{theorem} \lb{t3.3} Let $\fre \subset \bbR$ be compact.  Then the density of zeros measures for $T_n$ converge to the equilibrium measure for $\fre$.
\end{theorem}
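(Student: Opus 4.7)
The plan is to deduce Theorem~\ref{t3.3} from the root asymptotics in Theorem~\ref{t3.2} by a compactness plus unicity argument of the type standard in logarithmic potential theory. Write $\nu_n \equiv \frac{1}{n} \sum_{k=1}^{n} \delta_{z_k^{(n)}}$ for the normalized zero counting measure of $T_n$ (with multiplicity). By Fej\'er's theorem all zeros lie in $\mathrm{cvh}(\fre) = [A,B]$, so $\{\nu_n\}$ is a sequence of probability measures supported in the fixed compact interval $[A,B]$; in particular it is precompact in the weak-$*$ topology on $C([A,B])^*$. It therefore suffices to show that every weak-$*$ subsequential limit equals $\rho_\fre$.

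Let $\nu$ be such a limit, $\nu_{n_k} \to \nu$ weak-$*$. For every fixed $z \in \bbC \setminus [A,B]$, the map $w \mapsto \log\abs{z-w}^{-1}$ is continuous on $[A,B]$, so weak-$*$ convergence yields
\[
U^{\nu_{n_k}}(z) \equiv \int \log\abs{z-w}^{-1}\, d\nu_{n_k}(w) \,\longrightarrow\, U^{\nu}(z).
\]
On the other hand, writing $T_n(z) = \prod_{k}(z-z_k^{(n)})$ gives $\tfrac{1}{n}\log\abs{T_n(z)} = -U^{\nu_n}(z)$, and Theorem~\ref{t3.2} says the left side converges to $-\Phi_\fre(z)$ uniformly on compacta in $\bbC \setminus \mathrm{cvh}(\fre)$. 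Identifying the two limits,
\[
U^{\nu}(z) = \Phi_\fre(z), \qquad z \in \bbC \setminus [A,B].
\]

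The remaining task is to upgrade this exterior equality of potentials to equality of measures, $\nu = \rho_\fre$. My preferred route is via Stieltjes transforms: both $U^\nu$ and $\Phi_\fre$ are, on $\bbC \setminus [A,B]$, the real parts of the Cauchy integrals $\int (t-z)^{-1} d\nu(t)$, respectively $\int (t-z)^{-1} d\rho_\fre(t)$, by the identity $2\partial\, \log\abs{t-z}^{-1} = (t-z)^{-1}$ coming from \eqref{1.15b}. Applying $2\partial$ to the equality above shows these two Stieltjes transforms agree on $\bbC \setminus [A,B]$; since both are analytic on the connected set $\bbC_+$ (which is disjoint from $[A,B] \subset \bbR$), they agree throughout $\bbC_+$. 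The Stieltjes--Perron inversion formula then forces $\nu = \rho_\fre$. Because every weak-$*$ limit point of $\{\nu_n\}$ is $\rho_\fre$ and the sequence is precompact, the whole sequence converges weakly to $\rho_\fre$.

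The only genuinely delicate step is the passage from equality of potentials off $[A,B]$ to equality of measures, but this is classical and our indirect argument via the Stieltjes transform on $\bbC_+$ sidesteps having to invoke unicity of the logarithmic potential in full generality; alternatively one could appeal to the principle of descent together with Frostman's uniqueness of equilibrium measures, as in \cite{ST} or \cite{SaT}. All other steps (tightness from Fej\'er, continuity of $\log\abs{z-\cdot}$ off $[A,B]$, invocation of Theorem~\ref{t3.2}) are routine.
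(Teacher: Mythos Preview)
Your argument is correct and fleshes out exactly the ``standard methods'' the paper invokes but does not write down (it merely cites \cite{ST}, \cite{EqMC}, \cite{SaT} after Theorem~\ref{t3.2}). One verbal slip: $U^\nu$ and $\Phi_\fre$ are not the \emph{real parts} of the Cauchy integrals; rather, applying $2\partial$ to these harmonic functions yields the Stieltjes transforms (via $2\partial\log|t-z|^{-1}=(t-z)^{-1}$), which is the correct relation and is exactly what you then use.
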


%%%%%%%%%%%%%%%%%%%%%%%%%%%%%%%%%%%%%%%%%%%%%%%%%%%%%%%%%%%%%%
\section{Totik--Widom Bounds} \lb{s4}
%%%%%%%%%%%%%%%%%%%%%%%%%%%%%%%%%%%%%%%%%%%%%%%%%%%%%%%%%%%%%%

\begin{proof}[Proof of Theorem~\ref{t1.4}] Consider the function
\begin{equation} \lb{4.1}
h(z) \equiv G_\fre(z) - G_{\fre_n}(z)
\end{equation}
This function is harmonic on $(\bbC \cup \{\infty \})\setminus \fre_n$.  $h$ is harmonic at $\infty$ since the $\log(|z|)$ terms cancel and one can use the removable singularities theorem.  One has that
\begin{equation} \lb{4.2}
h(\infty) = R(\fre) - R(\fre_n) = \log \left[ \frac{C(\fre_n)}{C(\fre)} \right]
\end{equation}

Since $d\rho_n$ is harmonic measure, \eqref{1.7} holds for $h$.  Since $h(x) = G_\fre(x)$ on $\fre_n$, if $\{K_j\}_{j=1}^M$ are the gaps for $\fre$, then, using \eqref{2.5},
\begin{equation} \lb{4.3}
h(\infty) \le \sum_{j=1}^M \rho_n(K_j)  \, \max_{x \in K_j} (G_\fre(x)) \le \frac{1}{n} \sum_{j=1}^M G_\fre(w_j)
\end{equation}
Since regularity of $\fre$ implies that $G_\fre$ vanishes at the ends of each gap, the maximum is taken at a critical point, $w_j$, and the sum is precisely the Parreau--Widom sum. Exponentiating and using $\norm{T_n}_\fre \le 2C(\fre_n)^n$ and \eqref{4.2}, we get the result, \eqref{1.16}.  \end{proof}

\begin{remarks} 1. Because of the final assertion in Theorem \ref{t2.4}, one can replace $PW(\fre)$ in \eqref{1.16} by $\tfrac{1}{2}PW(\fre) + \tfrac{1}{2} S_n$ where $S_n$ is the sum of the $n$ largest values among the $G_\fre(w_j)$.

\smallskip

2. Just as the FFS Theorem is equivalent to information about $C(\fre_n)$, so \eqref{1.14} is equivalent to
\begin{equation} \lb{4.4}
C(\fre_n) \le C(\fre) \left(1+\frac{q}{n} \right)
\end{equation}
for some $q$, which is the form that Totik proved in the finite gap case.  To see that \eqref{4.4} $\Rightarrow$ \eqref{1.14}, we note that it is well known that $(1+\tfrac{q}{n})^n$ is monotone increasing in $n$ to $\exp(q)$, so given \eqref{2.3}, \eqref{4.4} implies \eqref{1.14} with $Q = 2 \exp(q)$.  In the other direction, since
\begin{equation}
e^x - 1 = \int_0^x e^y dy \le x e^x \; \mbox{ for $x \ge 0$}
\end{equation}
we have for $n \ge 1$ and $\widetilde{Q} \ge 1$ that
\begin{equation}
\widetilde{Q}^{1/n} = e^{\log(\widetilde{Q})/n} \le 1 + \frac{\log(\widetilde{Q})}{n} e^{\log(\widetilde{Q})/n} \le 1+\frac{q}{n}
\end{equation}
where $q = \widetilde{Q} \log(\widetilde{Q})$.  This shows that given \eqref{2.3}, \eqref{1.14} implies \eqref{4.4} with $\widetilde{Q} = Q/2$ and $q$ as just given.

\smallskip

3.  This seems to be the first example of an upper bound of the form \eqref{1.14} for an $\fre$ with an infinite number of components although there have been a number of papers, as we noted, for fairly general finite component sets in $\bbC$.  Since $T_n$ is defined variationally, in principle, upper bounds shouldn't be hard -- one need only guess a clever trial polynomial and indeed, that's what the earlier work does.  Our approach uses potential theory and doesn't seem to involve a variational guess although, it might be argued that underlying \eqref{2.3} is using the discriminant of $\fre$ as a trial polynomial.  But that's of course $T_n$ which is not merely a trial polynomial!
\end{remarks}

%%%%%%%%%%%%%%%%%%%%%%%%%%%%%%%%%%%%%%%%%%%%%%%%%%%%%%%%%%%%%%
\section{Szeg\H{o}--Widom Asymptotics} \lb{s5}
%%%%%%%%%%%%%%%%%%%%%%%%%%%%%%%%%%%%%%%%%%%%%%%%%%%%%%%%%%%%%%

In this section, we will prove Theorem \ref{t1.9} settling Widom's 1969 conjecture. We begin with some notation and some preliminaries. $\fre \subset \bbR$ will be a finite gap set (although some results like Theorem \ref{t5.1} hold more generally).  As before,
\begin{equation}
\Omega \equiv (\bbC \cup \{\infty \})\setminus \fre, \quad G \equiv G_\fre, \quad B \equiv B_\fre
\end{equation}
$\fre_n$ is given by \eqref{1.3}, $G_n$ is its Green's function, $B_n \equiv B_{\fre_n}$, and $\Omega_n \equiv (\bbC \cup \{\infty \})\setminus \fre_n$.  We'll let
\begin{equation}
\wti{\Omega} \equiv (\bbC \cup \{\infty \})\setminus \textrm{cvh}(\fre),
\quad \wti{G} \equiv G_{\textrm{cvh}(\fre)},
\quad \wti{B} \equiv B_{\textrm{cvh}(\fre)}
\end{equation}
Since Green's functions of regular sets decrease as the set increases (by an application of the maximum principle), we have that for all $z \in \bbC$
\begin{eqnarray} \lb{5.1} \nonumber
\fre \subset \fre_n \subset \textrm{cvh}(\fre) & \Rightarrow & \wti{G}(z) \le G_n(z) \le G(z) \\
& \Rightarrow & |B(z)| \le |B_n(z)| \le |\wti{B} (z)|
\end{eqnarray}

Define
\begin{equation} \lb{5.2}
L_n(z) \equiv \frac{T_n(z) B(z)^n}{C(\fre)^n}
\end{equation}
so that \eqref{1.20} says that $\lim_{n \to \infty} (L_n(z) - F_n(z)) = 0$ uniformly on compact subsets of the universal cover of $\Omega$.  By the Bernstein--Walsh lemma, \eqref{3.2}, and \eqref{1.14}, for any $n$ and $z$,
\begin{equation}
|L_n(z)| \le \norm{T_n}_\fre C(\fre)^{-n} \le Q
\end{equation}
Thus, by the Vitali convergence theorem, it suffices to prove that $\lim_{n \to \infty} (L_n(z) - F_n(z)) = 0$ uniformly on compact subsets of $\wti{\Omega}$.

By \eqref{2.4} and \eqref{2.3}, we have that
\begin{align} \nonumber \lb{5.3}
L_n(z) & =  \tfrac{1}{2} \norm{T_n}_\fre (B_n(z)^{-n}+B_n(z)^n)B(z)^n C(\fre)^{-n} \\
& = (1+B_n(z)^{2n}) \frac{C(\fre_n)^n B(z)^n} {C(\fre)^n B_n(z)^n} \\ \nonumber
& = (1+B_n(z)^{2n}) H_n(z)
\end{align}
where
\begin{equation} \lb{5.4}
H_n(z) \equiv \frac{C(\fre_n)^n B(z)^n} {C(\fre)^n B_n(z)^n}
\end{equation}

Since, by \eqref{5.1}, $|B_n(z)| \le |\wti{B}(z)|$ and $|\wti{B}(z)| < 1$ on $\wti{\Omega}$, we conclude that to prove \eqref{1.20}, it suffices to prove that uniformly on compact subsets of $\wti{\Omega}$,
\begin{equation} \lb{5.5}
\lim_{n \to \infty} (H_n(z) - F_n(z)) = 0
\end{equation}

Lest it go by too fast, we want to note that, in the above, we canceled two factors of $2$ -- namely those in \eqref{2.3} and \eqref{2.4} -- and it is this cancelation that enables the proof to work.  For an additional use of one of the factors of $2$ allows us to rewrite \eqref{1.22} as
\begin{equation}
\lim_{n \to \infty} \frac{C(\fre_n)^n}{C(\fre)^n \norm{F_n}_\Omega} = 1
\end{equation}
Since the first inequality in \eqref{5.1} implies that
\begin{equation} \lb{5.6}
\norm{H_n}_{\wti{\Omega}} \le \frac{C(\fre_n)^n}{C(\fre)^n} \le \frac{Q}{2}
\end{equation}
we see that \eqref{1.22} and \eqref{2.3} imply that
\begin{equation} \lb{5.7}
\limsup_{n \to \infty} \frac{\norm{H_n}_{\wti{\Omega}}}{\norm{F_n}_\Omega} \le 1
\end{equation}
It can be regarded that our ability to settle Widom's conjecture is just what is in this paragraph and the use of various arguments already in his paper.

Since there is a uniform upper bound on $H_n$ and $F_n$, Montel's theorem implies compactness, so we need only show convergence of enough subsequences and we can pick them so that $H_{n(j)}$ has a limit which we'll show is a trial function for Widom's variational problem -- \eqref{5.7} will then imply the limit must be the limit of the $F_{n(j)}$.  For this to work, we need to consider $H_n$ on a larger region than $\wti{\Omega}$.  $H_n$ is, of course, defined as a multivalued function on $\Omega_n$ so we'll need the following to control $\fre_n$ for $n$ large.  Recall (see point (b) after the alternation theorem) that each gap, $K$, of $\fre$ has at most one zero, $\zeta_n^{(K)}$, of $T_n$.

\begin{theorem} \lb{t5.1} Let $K = (r,s)$ be a gap of $\fre$.
\begin{SL}
\item[{\rm{(a)}}] If, for some subsequence, $\{n(j)\}_{j=1}^\infty$,  $\zeta_{n(j)}^{(K)}$ has a limit $\zeta_\infty^{(K)} \in K$, then for large $j$, $\fre_{n(j)} \cap K$ is a closed interval containing $\zeta_{n(j)}^{(K)}$ of size bounded by $e^{-Dn(j)}$ for some $D>0$.

\item[{\rm{(b)}}] If, for some subsequence, $\{n(j)\}_{j=1}^\infty$, $K \setminus \fre_{n(j)}$ is connected, then for some $C>0$,
\begin{equation} \lb{5.7a}
|K \cap \fre_{n(j)}| \le C n(j)^{-2}
\end{equation}

\item[{\rm{(c)}}] If, for some subsequence, $\{n(j)\}_{j=1}^\infty$, $\zeta_{n(j)}^{(K)}$ has a limit which is $r$ or $s$ and if $K \setminus \fre_{n(j)}$ is not connected, then for some $C>0$,
\begin{equation} \lb{5.7b}
|K \cap \fre_{n(j)}| \le C n(j)^{-1}
\end{equation}
and that intersection approaches r or s.
\end{SL}
\end{theorem}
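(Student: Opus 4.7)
My plan is to bound the length $L_n$ of the band of $\fre_n$ meeting $K$, call it $[\alpha_n,\beta_n]$, by combining a lower bound on $|\Delta_n'|$ at $\eta_n := \zeta_n^{(K)}$ (derived from Theorems~\ref{t1.4} and~\ref{t3.3}) with polynomial inequalities for $\Delta_n$ on $[\alpha_n,\beta_n]$; recall that $\deg\Delta_n = n$ and $\norm{\Delta_n}_{[\alpha_n,\beta_n]} = 2$.

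For \textit{(a)}, $\eta_n \to \zeta_\infty^{(K)} \in \intt(K)$ stays bounded away from all other zeros of $T_n$, which live in $\fre$ or in the other gaps. Writing $T_n'(\eta_n) = \prod_{k\ne j_0}(\eta_n - \zeta_n^{(k)})$ and applying Theorem~\ref{t3.3} with the uniform convergence $\log|\eta_n - y| \to \log|\zeta_\infty^{(K)}-y|$ on $\fre$, one obtains
\begin{equation*}
\tfrac{1}{n}\log|T_n'(\eta_n)| \longrightarrow \int\log|\zeta_\infty^{(K)}-y|\,d\rho_\fre(y) = G_\fre(\zeta_\infty^{(K)}) - R(\fre).
\end{equation*}
Paired with $\norm{T_n}_\fre \le Q\,C(\fre)^n$ from Theorem~\ref{t1.4}, this yields $|\Delta_n'(\eta_n)| \ge (2/Q)\,e^{nG_\fre(\zeta_\infty^{(K)})(1+o(1))}$. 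Markov's inequality gives $|\Delta_n'(\eta_n)| \le 4n^2/L_n$, hence $L_n \le e^{-Dn}$ for any $D < G_\fre(\zeta_\infty^{(K)})$ and $n$ large. Since $L_n\to 0$ and $\eta_n\to\zeta_\infty^{(K)}\in\intt(K)$, eventually $[\alpha_n,\beta_n]\subset K$, making $\fre_n\cap K$ a closed interval.

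For \textit{(b)}, by Theorem~\ref{t2.4}, $K\cap\fre_{n(j)}$ is either empty, all of $K$, or a half-open interval with an endpoint at $r$ or $s$; the ``all of $K$'' case is excluded for $j$ large because the desired bound $|K|\le C/n(j)^2$ would otherwise force $|K|=0$. In the principal case $\alpha_n = r$, the component $[r',r]\subset\fre\subset\fre_n$ forces the band adjacent on the left to also terminate at $r$, making $r$ a closed gap of $\fre_n$: $\Delta_n(r)=\pm 2$, $\Delta_n'(r)=0$. Using $B_n(r)^{2n}=1$ and twice differentiating $\Delta_n = B_n^n + B_n^{-n}$ at $r$ produces $|\Delta_n''(r)| = 2n^2|B_n'(r)|^2$, and a Taylor analysis at $r$ (with higher-order terms controlled by the Markov-type bounds $|\Delta_n^{(k)}(r)| = O(n^k)$ on the two adjacent bands) then yields $L_n \le C/(n|B_n'(r)|)$. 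The decisive quantitative step is $|B_n'(r)|\ge c\,n$, which reflects the $\sqrt{z-r}$-singularity of $B_\fre$ at the component endpoint $r$ (Theorem~\ref{t2.6}(b),(c)) that $B_n$ regularizes at scale $1/n^2$: I would establish it by evaluating the Herglotz representation \eqref{2.17} for $B_n'/B_n$ on the arc $\{r+i\delta : \delta\sim 1/n^2\}$ above $r$ and matching against the divergent $B_\fre'/B_\fre$, whose boundary growth $|B_\fre(r+i\delta)-B_\fre(r)| \sim c\sqrt{\delta}$ dictates the rate. Combining gives $L_n \le C/n^2$.

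For \textit{(c)}, the band is in $\intt(K)$ with $\eta_n\to r$ (WLOG), so $G_\fre(\eta_n)\to 0$ at rate $\sqrt{\eta_n-r}$ from Theorem~\ref{t2.6}(c). I would rerun (a) using Bernstein's inequality $|\Delta_n'(\eta_n)| \le 2n/\sqrt{(\eta_n-\alpha_n)(\beta_n-\eta_n)}$ in place of Markov's, paired with the now sub-exponential lower bound on $|\Delta_n'(\eta_n)|$, to obtain $L_n \le C/n$; the convergence $[\alpha_n,\beta_n]\to\{r\}$ follows from $\alpha_n\le\eta_n\to r$ and $L_n\to 0$. The principal obstacle throughout is the estimate $|B_n'(r)| \ge c\,n$ used in (b): the standard convergence $B_n\to B_\fre$ is uniform only on compact subsets of $\Omega$ and does not directly control $B_n$ at scale $1/n^2$ near the component endpoint $r$, so a delicate local analysis of the Blaschke product via \eqref{2.9} and \eqref{2.17} will be required.
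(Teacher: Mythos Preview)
Your argument for (a) is correct and matches the paper's: both use the density-of-zeros convergence (Theorem~\ref{t3.3}) to show that $T_{n(j)}/\norm{T_{n(j)}}_\fre$ grows like $e^{nG_\fre}$ away from $\zeta_{n(j)}^{(K)}$, forcing the band to be exponentially small.

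For (b) and (c) your approach has genuine gaps. In (b), the premise that $r$ is a closed gap of $\fre_n$ (hence $\Delta_n(r)=\pm2$, $\Delta_n'(r)=0$) is false in general: the band of $\fre_n$ meeting $K$ typically extends strictly past $r$ into the adjacent component of $\fre$, so $r$ lies in its \emph{interior} with $\Delta_n(r)\in(-2,2)$; nothing in ``$K\setminus\fre_{n(j)}$ connected'' forces a band edge at $r$. You are then bounding the full band length rather than $|K\cap\fre_n|$, and the former need not be small. Even granting your setup, you concede the key estimate $|B_n'(r)|\ge cn$ is unproven. In (c), the lower bound on $|\Delta_n'(\eta_n)|$ from the (a)-style argument degenerates as $\eta_n\to r$: since $G_\fre(r)=0$ and there is no a priori control on the rate $\eta_n\to r$, weak convergence of the zero measures against the unbounded test function $\log|\eta_n-\cdot|$ yields at best $|\Delta_n'(\eta_n)|\ge e^{o(n)}$, which combined with Bernstein gives nothing useful. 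The paper bypasses all of this by working with the equilibrium measure $\rho_n$: after a short capacity argument showing $K\cap\fre_{n(j)}$ must approach the edges of $K$, it uses the product formula \eqref{2.13a} for the density $w_n$, together with convergence of the critical points of $G_n$ to those of $G_\fre$, to obtain a pointwise lower bound on $w_n$ over the offending interval. Integrating against $\rho_n(K)\le 1/n$ from Theorem~\ref{t2.4} then gives $|K\cap\fre_{n(j)}|^{1/2}\le C/n$ in case (b) and $|K\cap\fre_{n(j)}|\le C/n$ in case (c), with no edge analysis of $B_n$ needed.
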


\begin{remarks} 1. For our application here, all we need is that in the first case, the band shrinks to a point and, in the last two cases, the band moves to the edges.  But the quantitative estimates are not hard, are interesting and we think optimal (as to order) in the first two cases and perhaps also in the third.

\smallskip

2. The second case can occur if the band of $\fre_n$ that intersects $K$ has a piece in $\fre$ (which is always the case if there are no zeros of $T_n$ in $K$) or if the band is entirely in $[r,s]$ but one of the touching gaps is closed, i.e., one edge is $r$ or $s$.  It is also the case if $K \cap \fre_n = \emptyset$.

\end{remarks}

\begin{lemma} \lb{t5.2} If, for some subsequence, $\{n(j)\}_{j=1}^\infty$, $\zeta_{n(j)}^{(K)}$ has a limit $\zeta_\infty^{(K)} \in K$, then for some $\delta >0$ and all large $j$, we have that for some $D>0$ and all $x \in [\zeta_\infty^{(K)} - \delta, \zeta_\infty^{(K)} + \delta]$,
\begin{equation} \lb{5.8}
\frac{T_{n(j)}(x)}{\norm{T_{n(j)}}_\fre} = \left(x - \zeta_{n(j)}^{(K)}\right) Q_j(x), \qquad |Q_j(x)| > e^{Dn(j)}
\end{equation}
\end{lemma}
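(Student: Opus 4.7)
The plan is to factor out the known zero from $T_{n(j)}/\|T_{n(j)}\|_\fre$ and use the discriminant identity combined with a complex-variable minimum principle argument to bound the remaining polynomial $Q_j$ below on a disk containing the real interval.

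First, I would fix $\delta>0$ small enough that $[\zeta_\infty^{(K)}-3\delta,\zeta_\infty^{(K)}+3\delta]\subset K$, set $U := \{z\in\bbC : |z-\zeta_\infty^{(K)}|<2\delta\}$, and note that $\bar U\subset\bbC\setminus\fre$, so that $c_0 := \min_{\bar U} G > 0$. Since each gap of $\fre$ contains at most one zero of $T_{n(j)}$, every zero of $T_{n(j)}$ other than $\zeta_{n(j)}^{(K)}$ lies in $\bbR\setminus K$ at distance $\geq 3\delta$ from $\zeta_\infty^{(K)}$; hence, for $j$ large, $Q_j$ has no zeros in $\bar U$, and $\log|Q_j|$ is harmonic on $U$ and continuous on $\bar U$.

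The key ingredient is the uniform convergence $G_{n(j)}\to G$ on $\partial U$. By Theorem \ref{t2.4}, the total $\rho_n$-mass of $\fre_n\setminus\fre$ is at most $M/n$ (with $M$ the number of gaps of $\fre$), so every weak subsequential limit of $\rho_n$ is supported on $\fre$. Combining this with $R(\fre_n)\to R(\fre)$ (from FFS via \eqref{2.3}) and lower semicontinuity of $\calE$ forces each such limit to have energy $\leq R(\fre)$ and hence equal $\rho_\fre$; thus $\rho_n\to\rho_\fre$ weakly. Since $\partial U\cap\fre_n=\emptyset$ for $n$ large (the band of $\fre_{n(j)}$ in $K$ sits inside $U$ for $j$ large and all other pieces of $\fre_n$ are at fixed positive distance from $\bar U$), a standard bounded-harmonic-family argument turns weak convergence into uniform convergence of $G_n$ to $G$ on $\partial U$.

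For $j$ large we then have $G_{n(j)}\geq c_0/2$ on $\partial U$. By \eqref{2.4} and $|B_n^{\pm n}(z)|=e^{\mp nG_n(z)}$, the triangle inequality gives $|T_n(z)|/\|T_n\|_\fre \geq \sinh(nG_n(z))$ throughout $\bbC$. Hence on $\partial U$, $|T_{n(j)}(z)|/\|T_{n(j)}\|_\fre \geq \tfrac14 e^{n(j)c_0/2}$, and since $|z-\zeta_{n(j)}^{(K)}|\leq 3\delta$ there, $|Q_j(z)|\geq (12\delta)^{-1}e^{n(j)c_0/2}$ on $\partial U$. The minimum principle for $\log|Q_j|$ on $\bar U$ propagates this estimate to all of $\bar U$, and in particular to $[\zeta_\infty^{(K)}-\delta,\zeta_\infty^{(K)}+\delta]\subset\bar U$, yielding \eqref{5.8} with any $D\in(0,c_0/2)$ for $j$ large.

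The main obstacle will be the uniform convergence $G_{n(j)}\to G$ near $\zeta_\infty^{(K)}$: a direct harmonic-measure estimate of $G-G_n$ is delicate because the band of $\fre_{n(j)}$ in $K$ clusters at $\zeta_\infty^{(K)}$, so the boundary values of $G-G_n$ on $\fre_{n(j)}$ stay of order $G(\zeta_\infty^{(K)})>0$ near that point; the energy-based route above circumvents this by integrating $\log|z-\cdot|$ against the weakly converging equilibrium measures, which remain well behaved on $\partial U$ because $\partial U$ is bounded away from $\fre_{n(j)}$ for large $n$.
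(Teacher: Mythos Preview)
Your approach is valid in outline but has one unjustified step, and it is considerably more elaborate than the paper's argument.

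The gap: you assert without proof that ``the band of $\fre_{n(j)}$ in $K$ sits inside $U$ for $j$ large.'' This matters: if the band in $K$ reached $\zeta_\infty^{(K)}\pm 2\delta$, then $G_{n(j)}$ would vanish at that point of $\partial U\cap\bbR$, your $\sinh$ lower bound would collapse there, and the minimum principle would give nothing. The claim is true and fillable --- if the band (which contains $\zeta_{n(j)}^{(K)}\to\zeta_\infty^{(K)}$) reached $\zeta_\infty^{(K)}+2\delta$ for infinitely many $j$, then for those $j$ one would have $\fre_{n(j)}\supset\fre\cup[\zeta_\infty^{(K)}+\delta,\zeta_\infty^{(K)}+2\delta]$, whence $C(\fre_{n(j)})\geq C(\fre\cup I)>C(\fre)$, contradicting $C(\fre_n)\to C(\fre)$ --- but you must supply it, and it is itself a nontrivial consequence of FFS, not a triviality.

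The paper's route avoids all of this. It writes $Q_j(x)=\norm{T_{n(j)}}_\fre^{-1}\prod_{i\neq i_0}(x-x_i^{(n(j))})$ and observes that every zero $x_i^{(n(j))}$ with $i\neq i_0$ lies in $\mathrm{cvh}(\fre)\setminus K$, hence at fixed positive distance from $[\zeta_\infty^{(K)}-\delta,\zeta_\infty^{(K)}+\delta]$. One can therefore integrate $\log|x-\cdot|$ directly against the zero-counting measure of $T_{n(j)}$ (minus its single atom in $K$): Theorem~\ref{t3.3} and Theorem~\ref{t1.2} give $n(j)^{-1}\log|Q_j(x)|\to G(x)$ uniformly on that interval, and positivity of $G$ there finishes immediately. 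No disk, no minimum principle, no discriminant identity, and no need to locate $\fre_{n(j)}$ relative to $\partial U$ --- the real-line geometry alone separates the remaining zeros from the interval of interest, which is exactly the separation your complex-variable detour has to reestablish by other means.
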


\begin{proof}
The first part of \eqref{5.8} holds where $Q_j$ is the product of $x-x^{(n)}_j$ over all zeros other than $\zeta_{n(j)}^{(K)}$ divided by $\norm{T_{n(j)}}_\fre$.  By Theorems \ref{t1.2} and \ref{t3.3}, for $\delta$ small, $\lim_{j \to \infty} n(j)^{-1} \log \vert Q_j(x)\vert = G(x)$ uniformly on $[\zeta_\infty^{(K)} - \delta, \zeta_\infty^{(K)} + \delta]$.  Since $G$ is bounded away from $0$ on this interval, the second part of \eqref{5.8} is valid.  \end{proof}

Recall that we use $\Delta_n$ for ${2 T_n}/{\norm{T_n}_\fre}$.

\begin{lemma} \lb{t5.2a} %Let $K = (r,s)$ be a gap of $\fre$.
Let $v \in K$ be the unique critical point of $G$ in $K$.  Suppose that for some subsequence, $\{n(j)\}_{j=1}^\infty$, and $\delta > 0$, we have that $\fre_{n(j)} \cap (v-\delta, v+ \delta) = \emptyset$.  Then for $j$ large, there is a single zero, $c_{n(j)}$, of $\Delta_{n(j)}'$ in $(v- \delta, v + \delta)$ and %
\begin{equation} \lb{5.8a}
\lim_{j \to \infty} c_{n(j)} = v
\end{equation}
\end{lemma}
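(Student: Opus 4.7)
The plan is to compare $G_n \equiv G_{\fre_n}$ with $G \equiv G_\fre$ on a complex neighborhood of $[v-\delta/2, v+\delta/2]$, use a perturbation argument to locate a critical point of $G_{n(j)}$ near $v$, then identify it with a zero of $\Delta_{n(j)}'$ and deduce uniqueness in the larger interval from the band structure of $\fre_n$.

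\textbf{Uniform convergence of $G_{n(j)}$ to $G$.} Fix $\epsilon \in (0,\delta/4)$ and let $V = \{z \in \bbC : \dist(z,[v-\delta/2, v+\delta/2]) \le \epsilon\}$. Since $[v-\delta/2, v+\delta/2] \subset K$ and, by hypothesis, $\fre_{n(j)} \cap (v-\delta,v+\delta) = \emptyset$ for all $j$, the set $V$ is disjoint from $\fre \cup \bigcup_j \fre_{n(j)}$. The function $h_n \equiv G - G_n$ is harmonic on $\Omega_n$ and nonnegative (by the maximum principle, since $\fre \subset \fre_n$), with $h_n(\infty) = \log[C(\fre_n)/C(\fre)] \to 0$ by the FFS theorem (Theorem~\ref{t1.2}). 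A Harnack chain from $\infty$ routed through the upper half plane (always contained in $\Omega_n$) and into $V$ can be chosen once and for all, so it yields a Harnack constant independent of $j$; hence $\sup_V h_{n(j)} \le C\, h_{n(j)}(\infty) \to 0$. Interior estimates for harmonic functions upgrade this to uniform $C^2$ convergence of $G_{n(j)}$ to $G$ on $[v-\delta/2, v+\delta/2]$.

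\textbf{Perturbing the critical point.} From $G(x) = R(\fre) + \int \log|y-x|\, d\rho_\fre(y)$, differentiating twice on the gap $K$ gives $G''(x) = -\int (y-x)^{-2}\, d\rho_\fre(y) < 0$, so $G|_K$ is strictly concave; $v$ is its unique critical point and $G''(v) < 0$. Pick $\eta \in (0,\delta/2)$ with $G'(v-\eta) > 0 > G'(v+\eta)$ and $G''(x) \le G''(v)/2 < 0$ on $[v-\eta, v+\eta]$. By the convergence above, the same inequalities hold for $G_{n(j)}$ for $j$ large, so $G_{n(j)}'$ is strictly decreasing and changes sign on $[v-\eta, v+\eta]$, producing a unique $c_{n(j)} \in (v-\eta, v+\eta) \subset (v-\delta, v+\delta)$ with $G_{n(j)}'(c_{n(j)}) = 0$ and $c_{n(j)} \to v$.

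\textbf{From $G_n'$ to $\Delta_n'$, and uniqueness.} Write $\phi_n \equiv \Delta_n/2 + \sqrt{(\Delta_n/2)^2 - 1}$, so \eqref{2.8} gives $nG_n = \log|\phi_n|$. A direct computation gives $\phi_n' = \Delta_n'\, \phi_n / (2\sqrt{(\Delta_n/2)^2 - 1})$; in the interior of any gap of $\fre_n$ both $\phi_n$ and the radical are nonzero, so zeros of $G_n'$ coincide there with zeros of $\Delta_n'$. Hence $c_{n(j)}$ is a zero of $\Delta_{n(j)}'$. Uniqueness in $(v-\delta, v+\delta)$: by hypothesis this interval lies in a single gap of $\fre_{n(j)}$, and each gap of $\fre_n$ contains exactly one zero of $\Delta_n'$ since, by the monotonicity on bands in Theorem~\ref{t2.1}, Rolle's theorem produces at least one zero of $\Delta_n'$ in each of the $n-1$ gaps between bands, accounting for all zeros of the degree $n-1$ polynomial $\Delta_n'$. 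The main obstacle is the Harnack step establishing $h_{n(j)} \to 0$ uniformly on $V$ with constants independent of $j$; the rest is a routine perturbation around a non-degenerate critical point.
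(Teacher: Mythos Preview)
Your proof is correct and follows essentially the same approach as the paper: uniform convergence of $\partial G_{n(j)} \to \partial G$ near $v$, the identification of critical points of $G_n$ with zeros of $\Delta_n'$ via \eqref{2.8}, and uniqueness from the one-critical-point-per-gap structure of $G_{\fre_n}$. The paper gives only a three-sentence sketch asserting these facts; you have supplied the Harnack argument for the convergence, the concavity computation $G''<0$ for non-degeneracy, and the Rolle/counting argument for uniqueness, which are precisely the details a careful reader would need to fill in.
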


\begin{proof} We have that uniformly in $[v-{\delta}/{2}, v+ {\delta}/{2}]$, $\partial G_{n(j)} \to \partial G$. By \eqref{2.8}, zeros of $\Delta_{n(j)}'$ are precisely the critical points of $G_{n(j)}$.  This implies uniqueness of the zero while the uniform convergence implies existence and convergence. \end{proof}

It will be useful to have notation for the connected components of
\begin{equation}
\fre =\bigcup_{k=1}^{p}[a_k,b_k], \quad a_k < b_k < a_{k+1} < b_{k+1}, \quad k=1,\dots,p-1
\end{equation}
It will also be convenient to have a notation for connected components of $\fre_n$. We will denote these by
\begin{equation}
[a_{n,k},b_{n,k}], \quad k=1,\dots,s_n, \quad 1\leq s_n < 2p
\end{equation}
since there is at most one extra band or partial band in each gap. Then
$\fre_n=\bigcup_{k=1}^{s_n}[a_{n,k},b_{n,k}]$
is a disjoint union.  We let $\{c_{n,k}\}_{k=1}^{s_n-1}$ be the zeros of $\Delta_n'(x)$ not in $\fre_n$, labeled so that $a_{n,k}<b_{n,k}<c_{n,k}<a_{n,k+1}<b_{n,k+1}$. %, $k=1,\dots,s_n-1$.
By \eqref{2.13a}, we have that
\begin{equation} \lb{5.8b}
w_n(x) = \frac{1}{\pi n}\frac{|\Delta_n'(x)|}{\sqrt{4-\Delta_n(x)^2}}
= \frac{\frac{1}{\pi}\prod_{k=1}^{s_n-1}|x-c_{n,k}|} {\prod_{k=1}^{s_n}|(x-a_{n,k})(x-b_{n,k})|^{1/2}}
\end{equation}

\smallskip

\begin{lemma} \lb{t5.2b}For $k=2,\dots,s_n-1$ and $x\in[a_{n,k},b_{n,k}]$,
\begin{equation} \lb{5.8c}
w_n(x) \geq \frac{1}{\pi} \frac{|x-c_{n,k}|}{|x-b_{p}|} \frac{|x-c_{n,k-1}|}{|x-a_{1}|} \frac{1}{|(x-a_{n,k})(x-b_{n,k})|^{1/2}}
\end{equation}
When $x\in[a_{n,1},b_{n,1}]$,
\begin{equation} \lb{5.8d}
w_n(x) \geq \frac{1}{\pi} \frac{|x-c_{n,1}|}{|x-b_{p}|} \frac{1}{|(x-a_{n,1})(x-b_{n,1})|^{1/2}}
\end{equation}
When $x\in[a_{n,{s_n}},b_{n,{s_n}}]$,
\begin{equation} \lb{5.8e}
w_n(x) \geq \frac{1}{\pi} \frac{|x-c_{n,{s_n}}|}{|x-a_{1}|} \frac{1}{|(x-a_{n,{s_n}})(x-b_{n,{s_n}})|^{1/2}}
\end{equation}
\end{lemma}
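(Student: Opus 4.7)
My plan is to exploit the product formula \eqref{5.8b} by pairing each unneeded denominator factor $|(x-a_{n,j})(x-b_{n,j})|^{1/2}$ (those with $j\ne k$) against a numerator factor $|x-c_{n,\ell}|$ that dominates it. The two unpaired critical points $c_{n,k-1}$ and $c_{n,k}$ will survive into the final bound, while the two extremal bands $j=1$ and $j=s_n$, which have no critical point on their outer side, will be absorbed into $|x-a_1|$ and $|x-b_p|$ respectively, using $\fre_n\subset[a_1,b_p]$.

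The whole argument rests on one elementary inequality: if an interval $[a,b]$ lies strictly between the point $x$ and some $c$ (i.e.\ either $x<a\le b<c$ or $c<a\le b<x$), then both $|x-a|\le|x-c|$ and $|x-b|\le|x-c|$, so
\[
|(x-a)(x-b)|^{1/2}\le|x-c|.
\]
Using the ordering $b_{n,j}<c_{n,j}<a_{n,j+1}$, which comes from $c_{n,j}$ being the unique critical point of $G_n$ in the $j$th gap of $\fre_n$, for $x\in[a_{n,k},b_{n,k}]$ with $2\le k\le s_n-1$ I would pair:
\begin{itemize}
\item each $j\in\{2,\dots,k-1\}$ with $c_{n,j-1}$, since $c_{n,j-1}<a_{n,j}\le b_{n,j}<x$;
\item each $j\in\{k+1,\dots,s_n-1\}$ with $c_{n,j}$, since $x<a_{n,j}\le b_{n,j}<c_{n,j}$.
\end{itemize}
These pairings consume exactly the critical points $c_{n,1},\dots,c_{n,k-2}$ and $c_{n,k+1},\dots,c_{n,s_n-1}$, leaving $c_{n,k-1}$ and $c_{n,k}$ free in the numerator. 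For the two unpaired bands $j=1$ and $j=s_n$, the inclusion $\fre_n\subset[a_1,b_p]$ gives $a_1\le a_{n,1}\le b_{n,1}<x$ and $x<a_{n,s_n}\le b_{n,s_n}\le b_p$, hence $|(x-a_{n,1})(x-b_{n,1})|^{1/2}\le|x-a_1|$ and $|(x-a_{n,s_n})(x-b_{n,s_n})|^{1/2}\le|x-b_p|$.

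Substituting these estimates into \eqref{5.8b} and canceling the common $|x-c_{n,\ell}|$ factors from numerator and denominator yields \eqref{5.8c} at once. The edge cases \eqref{5.8d} ($k=1$) and \eqref{5.8e} ($k=s_n$) follow by the same argument with one side of the pairing empty: for $k=1$ only band $j=s_n$ is unpaired and only $c_{n,1}$ survives in the numerator; symmetrically for $k=s_n$, only band $j=1$ is unpaired and only $c_{n,s_n-1}$ survives. I expect no real obstacle: once the pairing scheme is identified -- and it is essentially forced by which side of each band $x$ lies on -- the whole proof is combinatorial bookkeeping around the one elementary inequality above.
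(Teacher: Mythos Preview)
Your proposal is correct and follows essentially the same route as the paper: the same elementary inequality $|(x-a_{n,j})(x-b_{n,j})|^{1/2}\le|x-c|$ when $[a_{n,j},b_{n,j}]$ lies between $x$ and $c$, the same pairing scheme (band $j<k$ with $c_{n,j-1}$, band $j>k$ with $c_{n,j}$), and the same absorption of the two extremal bands into $|x-a_1|$ and $|x-b_p|$. You have also correctly noted that for $k=s_n$ the surviving critical point is $c_{n,s_n-1}$ (the index $c_{n,s_n}$ in \eqref{5.8e} is a typo in the statement).
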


\begin{proof} Suppose first that $k = 2, \dots, s_n-1$. Since $a_{n,j}<b_{n,j}<c_{n,j}<a_{n,j+1}<b_{n,j+1}$, $j=1,\dots,s_n-1$, the following estimates hold,
\begin{align}
\frac{|x-c_{n,j}|}{|(x-a_{n,j})(x-b_{n,j})|^{1/2}} &\geq 1 \;\text{ for all } x<a_{n,j}\\
\frac{|x-c_{n,j-1}|}{|(x-a_{n,j})(x-b_{n,j})|^{1/2}} &\geq 1 \; \text{ for all } x>b_{n,j}
\end{align}
Thus, we get a lower bound if we drop the first zero and second band, second zero and third band, $\dots$ and similarly drop the last zero and next to last band, $\dots$.  We can then use, since $x > b_1$, that $|x - b_1| < |x - a_1|$ and similarly on the other end to get the lower bound \eqref{5.8c}.

For the cases $k = 1$ or $s_n$, we only need to do the zero shielding on one side and we get \eqref{5.8d} and \eqref{5.8e}. \end{proof}

\begin{proof}[Proof of Theorem~\ref{t5.1}] \rm{(a)} By Lemma \ref{t5.2}, $T_{n(j)}(x) = \pm \norm{T_{n(j)}}_\fre$ has solutions within $e^{-Dn(j)}$ of $\zeta_{n(j)}^{(K)}$, so there is a band of $\fre_{n(j)}$ of size less than $2e^{-Dn(j)}$ containing that zero. Since $K \cap \fre_{n(j)}$ contains at most one band, we have the claimed result.

\rm{(b)}, \rm{(c)} (common part) We begin by showing that when the zeros don't have a limit point in $(r,s)$, then any part of $K \cap \fre_{n(j)}$ approaches the edges of $K$.  If there is a point in $(r+2\epsilon, s-2\epsilon) \cap \fre_{n(j)}$ and a zero in $(r,r+\epsilon)$, then since $\fre_{n(j)} \cap K$ is connected, we have $[r+\epsilon, r+2 \epsilon] \equiv I_- \subset \fre_{n(j)} \cap K$ and similarly with $I_+ \equiv [s-2\epsilon, s-\epsilon]$ if the zero is within $\epsilon$ of $s$.  If there is no zero in $K$, then the band extends past one of the edges of $K$ and again either $I_-$ or $I_+$ is in $\fre_{n(j)} \cap K$.  Thus, if it is not eventually true that $(r+2\epsilon, s-2\epsilon) \cap \fre_{n(j)} = \emptyset$, then infinitely often, either $C(\fre_n) \ge C(\fre \cup I_-)$ or the same for $I_+$.  This is inconsistent with Theorems \ref{2.9} and \ref{3.1} which imply that $\lim_{n \to \infty} C(\fre_n) = C(\fre)$ proving, by contradiction, the desired result that the bands in $\fre$ approach the edges.

\rm{(b)} We consider the case where the band spreads into the left side of $K$ and $k=2,\dots,s_n-1$. The argument is similar for $k=1$ or $k=s_n$ or at the other ends of the gaps.  We let $[a, b]$ be the connected component of $\fre$ with $b=r$.  We thus suppose that $K \cap \fre_{n(j)} = [\alpha_j, \beta_j]$ with $\alpha_j = b$ and $\beta_j \to b$ and that $b_{n(j),k(j)} = \beta_j$ and $a_{n(j),k(j)} \le a$. We assume that $c_{n(j)}$ and $v$ are as in Lemma \ref{t5.2a} and that \eqref{5.8a} holds.  Thus, we can suppose that $j$ is so large that for $x < \beta_j$ we have that $|x - c_{n(j)}| > \tfrac{v-b}{2}$.  \eqref{5.8c} then becomes
\begin{equation} \lb{5.8f}
w_{n(j)}(x) \geq \frac{v - b}{2 \pi} \frac{|b-a|^{1/2}}{|b_{p}-a_{1}|^2} \frac{1}{|x-\beta_j|^{1/2}}
\end{equation}
Integrating from $\alpha_j$ to $\beta_j$ and using Theorem \ref{t2.4}, we get that
\begin{equation} \lb{5.8g}
\frac{1}{n(j)} \geq \int_{\alpha_j}^{\beta_j} w_{n(j)}(x)dx \geq \frac{v-b}{\pi} \frac{|b-a|^{1/2}}{|b_{p}-a_{1}|^2} |\beta_j-\alpha_j|^{1/2}
\end{equation}
proving \eqref{5.7a}.

\rm{(c)} The argument is similar to \rm{(b)} except that now we don't have $c_{n(j),k(j)-1} < a < b = \alpha_j$ but only $b < c_{n(j),k(j)-1} < \alpha_j = a_{n(j),k(j)}$ so \eqref{5.8f} is replaced by
\begin{equation} \lb{5.8h}
w_{n(j)}(x) \geq \frac{v - b}{2 \pi} \frac{|x-\alpha_j|^{1/2}}{|b_{p}-a_{1}|^2} \frac{1}{|x-\beta_j|^{1/2}}
\end{equation}
which leads to \eqref{5.7b} using $\int_0^1 \sqrt{\frac{x}{1-x}} < \infty$ and the analog of \eqref{5.8g}.
\end{proof}

\begin{proof}[Proof of Theorem~\ref{t1.9}] As noted, by boundedness and Montel's theorem, it suffices to prove that any subsequence has a subsubsequence for which \eqref{5.5} holds.  We can choose this subsubsequence, $\{n(j)\}_{j=1}^\infty$, so that:

\smallskip

\begin{SL}
\item[{\rm{(1)}}] The characters $\chi_{n(j)} \to \chi_\infty$ for some character $\chi_\infty$.  This implies, by Theorem \ref{t1.6}, that $F_{n(j)} \to F_\infty$ uniformly on compact subsets and $\norm{F_{n(j)}}_\Omega \to \norm{F_\infty}_\Omega$.

\smallskip

\item[{\rm{(2)}}] In each gap, $K_\ell$, of $\fre$, either $T_{n(j)}$ has a zero for $j$ large and the limit of the zeros is $x_\ell \in K_\ell$ or any zero in the gap, $K_\ell$, approaches $\fre$ in the limit or there is no zero in that gap.

\smallskip

\item[{\rm{(3)}}]  On $\widetilde{\Omega}$, the subsequence $H_{n(j)}(z)$ has a limit $H_\infty(z)$ by Montel's theorem.  In this case, it is easy to see that $\norm{H_\infty}_{\wti{\Omega}} \le \liminf \norm{H_{n(j)}}_{\wti{\Omega}}$.

\end{SL}

\smallskip

Let $\calL$ be  the set of gaps, $K_\ell$, with a limit point of zeros. The $H_{n(j)}$ can be continued along any curve in $\Omega_{n(j)}$.  Since all the harmonic measures of sets in $\fre_n$ are multiples of ${1}/{n}$, $B_n^n$ is analytic on $\Omega_n$.  It follows that $H_{n(j)}$ are defined and character automorphic with character $\chi_{n(j)}$ on sets which converge to the universal cover of $\Omega$ with the points that lie over the set $\{x_\ell\}_{\{K_\ell \in \calL\}}$ removed. Thus, by \eqref{5.6} and Vitali's theorem, $H_\infty$ has a continuation to that set. By \eqref{5.6} again, $x_\ell$ are removable singularities.

Since near $\infty$,  for any $\frg$, $B_{\frg}(z) = {z}/{C(\frg)} + \rm{O}(1)$, we see that for each $n$, $H_n(\infty) =1 \Rightarrow H_\infty(\infty) =1$. Thus, $H_\infty$ is a trial function for the problem where $F_\infty$ is the minimizer. By \eqref{5.7} and continuity of $\norm{F_n}_\Omega$, we see that $\norm{H_\infty}_\Omega \le \norm{F_\infty}_\Omega$.  Thus, the uniqueness of the minimizer implies that $H_\infty = F_\infty$, proving the desired convergence on $\widetilde{\Omega}$. \end{proof}

%%%%%%%%%%%%%%%%%%%%%%%%%%%%%%%

\end{document}